\title{On birational automorphisms of double EPW-cubes}
\date{\today}
\author{Simone Billi\footnote{Member of the INdAM group GNSAGA, and partially supported by the Curiosity Driven 2021 Project \textit{Varieties with trivial or
negative canonical bundle and the birational geometry of moduli spaces of curves: a
constructive approach} - Programma nazionale per la Ricerca (PNR) DM 737/2021} \and Stevell Muller\footnote{Supported by the Deutsche Forschungsgemeinschaft (DFG, German Research Foundation) – Project-ID 286237555 –
TRR 195 (Gefördert durch die Deutsche Forschungsgemeinschaft (DFG) – Projektnummer 286237555 – TRR 195)} \and Tomasz Wawak\footnote{Supported by  Narodowe Centrum Nauki 2018/30/E/ST1/0053 grant,  the Narodowe Centrum Nauki grant UMO-2021/43/D/ST1/02290, and the Priority Research Area SciMat under the program Excellence
Initiative – Research University at the Jagiellonian University in Kraków}}
\begin{document}
\fontsize{10pt}{14pt}
\maketitle

\begin{abstract}
We give a classification of finite groups of symplectic birational automorphisms on a manifold of \(\KTcube\)--type with stable cohomological action. We describe the group of  polarized automorphisms of a smooth double EPW-cube. Using this description, we exhibit examples of projective hyperk\"ahler manifolds of \(\KTcube\)--type of maximal Picard rank with a symplectic action of a large group.
\end{abstract}
\section*{Introduction}

The classification of finite symplectic birational actions on the known deformation types of hyperk\"ahler manifolds is still an open problem. 
Nonetheless, the works \cite{nikulin-symp,kondo-symp,mukai-symp,hashimoto-symp} about automorphisms of K3 surfaces paved a way of resolving this question using lattice-theoretic arguments. 
In particular, there exists a complete lattice classification of finite symplectic actions on K3 surfaces.\smallskip

Beyond the case of prime order cyclic actions, the actual classification of finite groups of symplectic birational automorphisms for higher dimensional hyperk\"ahler manifolds is more intricate than in the case of K3 surfaces.
There are three main reasons: there may be infinitely many deformation types per dimension, the associated quadratic forms are not unimodular, and the associated monodromy groups are not always maximal. In \cite[Theorem]{giosympaut},  Mongardi shows that finite groups of symplectic automorphisms on hyperk\"ahler manifolds of deformation type $\textnormal{K3}^{[n]}$ can be recovered from finite groups of isometries of the Leech lattice. 
This argument allowed H\"ohn and Mason to complete the cohomological description of finite groups of symplectic automorphisms for the deformation type $\textnormal{K3}^{[2]}$ \cite{Hohn2014FiniteGO}. 
Similar lattice-theoretic methods were used in the classification of finite symplectic actions on cubic fourfolds \cite{laza-zheng} and also, to some extent, in the recent classification of finite groups of symplectic birational transformations for the type OG10 \cite{marqinvo,marquand-muller}.\smallskip

We classify stable, i.e. with trivial action on the discriminant group\footnote{Here stable is not related to any stability condition, but trivial action on the discriminant group is stable under primitive embeddings of lattices, see \cite{GHS}}, finite subgroups of symplectic birational transformations on hyperk\"ahler manifolds of $\KTcube$--type. If \(X\) is hyperkähler, then we denote by \(\Bir^s(X)\) its group of symplectic birational transformations.
\begin{thm}[{\autoref{class_symplectic_groups}}]
    Let \(G\) be a finite group. Then there exists a hyperk\"ahler manifold \(X\) of \(\KTcube\)--type such that \(G\leq\Bir^s(X)\) is stable if and only if \(G\) is isomorphic to a subgroup of ones of the groups in \cite{database} (see also \autoref{tab: lovely table 2}).
\end{thm}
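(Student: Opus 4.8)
\medskip
\noindent\emph{Sketch of a proof strategy.}
I would translate the statement into lattice theory, reduce the resulting lattice problem to one about finite subgroups of the automorphism group $\mathrm{Co}_0$ of the Leech lattice $\Lambda$, and then run a computer-assisted enumeration. Write $L:=U^{3}\oplus E_8(-1)^{2}\oplus\langle -4\rangle$ for the abstract lattice $H^{2}(X,\mathbb Z)$ of a manifold of $\KTcube$--type, so $A_L\cong\mathbb Z/4\mathbb Z$. If $G\leq\Bir^{s}(X)$ is finite and stable, then $G$ acts on $L$ by monodromy operators that fix the period (the action is symplectic) and act trivially on $A_L$ (stability). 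Averaging over the finite group $G$, which preserves the convex cone $\overline{\mathrm{Mov}}(X)$, produces a $G$--fixed class $a$ in its interior; together with the real and imaginary parts of the period this spans a $G$--fixed positive-definite rank $3$ subspace of $L\otimes\mathbb R$, so the coinvariant lattice $S_G:=(L^{G})^{\perp}\subseteq L$ is negative definite, $G$ acts faithfully on $S_G$ with $S_G^{G}=0$, and $\operatorname{rank}S_G\leq 20$; moreover $S_G\subseteq\mathrm{NS}(X)$, and since no wall of $\overline{\mathrm{Mov}}(X)$ contains the interior class $a$ while every class of $S_G$ is orthogonal to $a$, the lattice $S_G$ contains no wall divisor (in particular no $(-2)$--class). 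Conversely, given abstract data of this kind --- a negative definite lattice $S$ of rank at most $20$ without wall divisors, carrying a faithful $G$--action with $S^{G}=0$, together with a $G$--equivariant primitive embedding $S\hookrightarrow L$ inducing the trivial action on $A_L$ --- one takes a very general period point in the period domain of $M:=S^{\perp}\subseteq L$ (of signature $(3,\,20-\operatorname{rank}S)$) and uses the surjectivity of the period map and the global Torelli theorem to produce a (possibly non-projective) manifold $X$ of $\KTcube$--type with $\mathrm{NS}(X)=S$; since there are no wall divisors the K\"ahler cone of $X$ equals its positive cone, and $G$, which fixes the period and hence a K\"ahler class, is realized as a subgroup of $\mathrm{Aut}^{s}(X)\subseteq\Bir^{s}(X)$, stable by construction.

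The second and principal step is the reduction to the Leech lattice. Because $\langle -4\rangle$ admits a primitive embedding into the hyperbolic plane $U$, the lattice $L\oplus\langle 4\rangle$ is a finite-index sublattice of the even unimodular lattice $\widetilde L:=U^{4}\oplus E_8(-1)^{2}$ of signature $(4,20)$; since $G$ is stable, its action extends uniquely to $\widetilde L$, with $S_G$ still a primitive sublattice and with trivial induced action on the discriminant. As $S_G$ is rootless, the classification of $G$--equivariant primitive embeddings of rootless negative definite lattices into even unimodular lattices of signature $(4,20)$ --- equivalently into the twenty-four Niemeier lattices --- forces $S_G$ to embed, $G$--equivariantly, into the Leech lattice $\Lambda(-1)$. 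This produces an embedding $\iota\colon G\hookrightarrow\mathrm{Co}_0$ whose coinvariant sublattice $\Lambda_G$ is isometric to $S_G$, so $\operatorname{rank}\Lambda_G\leq 20$, equivalently $\operatorname{rank}\Lambda^{G}\geq 4$; conversely, any such $\iota$ for which $\Lambda_G$ has no wall divisors and admits a stable primitive embedding into $L$ is realized geometrically by the first step. Thus being a stable finite symplectic birational group of some manifold of $\KTcube$--type becomes equivalent to an explicit condition on finite subgroups of $\mathrm{Co}_0$.

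Finally I would carry out the enumeration. The conjugacy classes of finite subgroups $G\leq\mathrm{Co}_0$ with $\operatorname{rank}\Lambda^{G}\geq 4$ form a finite list that can be generated and traversed by computer; discarding those whose coinvariant lattice either carries a wall divisor or admits no stable primitive embedding into $L$, and extracting the maximal survivors, produces exactly the groups recorded in \cite{database} and displayed in \autoref{tab: lovely table 2}, which is \autoref{class_symplectic_groups}. That the answer is closed under passage to subgroups --- so that ``a subgroup of one of the groups'' is the correct phrasing --- follows from the observation that, for $H\leq G$, the coinvariant $\Lambda_H=(\Lambda^{H})^{\perp}$ is again a primitive sublattice of $\Lambda$, of rank no larger than $\operatorname{rank}\Lambda_G$, hence still rootless, without wall divisors, and inheriting a stable primitive embedding into $L$; a restriction check against the known classification for manifolds of $\textnormal{K3}^{[2]}$--type serves as a consistency test.

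The step I expect to be the main obstacle is the reduction to the Leech lattice: one must push the stability condition, together with the absence of wall divisors, through Nikulin's discriminant-form machinery while the non-unimodular summand $\langle -4\rangle$ is present, so that ``stable'' really is preserved when moving between $L$, $\widetilde L$, the Niemeier lattices and $\Lambda$, and one must verify that the finite subgroups of $\mathrm{Co}_0$ surviving the rank and compatibility conditions are exactly those arising geometrically. The closing enumeration is conceptually routine but computationally heavy, and it is there that the explicit database is indispensable.
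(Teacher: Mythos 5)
Your overall route is the same as the paper's: reduce to lattice theory via the Torelli theorem, pass to finite subgroups of the Conway group acting on the Leech lattice, and finish by a computer enumeration over the H\"ohn--Mason list of Leech pairs combined with Nikulin's classification of primitive embeddings. The one structural difference is that you propose to re-derive the Leech-lattice reduction (via the unimodular overlattice $\bU^{\oplus 4}\oplus \bE_8^{\oplus 2}$ and the Niemeier lattices), whereas the paper simply invokes Mongardi's theorem for this step; you correctly identify that step as the hard one, and the paper's choice to cite it is what keeps its proof short. Your "extract the maximal survivors" step is made precise in the paper by the notion of (stable) saturation, i.e.\ replacing $G$ by $\bO^\#(C)$ for the coinvariant lattice $C$, which is what justifies the phrasing "subgroup of one of the groups in the database".

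There is, however, one genuine imprecision that would corrupt the enumeration if run as written: you conflate wall divisors with the classes cutting out the movable cone. For manifolds of $\KTcube$--type the walls of the movable cone are given by the numerical \emph{prime exceptional} divisors, of types $(-2,1)$, $(-4,2)$, $(-4,4)$, while the full set of wall divisors (which additionally contains the types $(-12,2)$ and $(-36,4)$) cuts out the K\"ahler cone. For the statement at hand, about $\Bir^s(X)$, the correct condition on the coinvariant lattice is $C\cap\mathcal{W}^{pex}_{\KTcube}=\emptyset$; demanding $C\cap\mathcal{W}_{\KTcube}=\emptyset$ instead characterizes \emph{regular} symplectic actions. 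Your forward direction, argued through the movable cone, actually proves the correct (weaker) exclusion, but your converse then realizes $G$ inside $\Aut^s(X)$ by asking that the K\"ahler cone equal the positive cone, which needs the stronger condition; an enumeration filtered by the stronger condition would omit every entry of the table whose coinvariant lattice contains a $(-12,2)$ or $(-36,4)$ class but no prime exceptional divisor (for instance the $L_3(4)$ entries, which are precisely the ones marked as non-regular and are essential for the paper's later geometric applications). The fix is to stay with the movable cone throughout: absence of prime exceptional divisors in $C$ forces the movable cone to be the full positive cone, $G$ fixes a class in its interior, and Markman's birational Torelli theorem then realizes $G$ in $\Bir^s(X)$ rather than in $\Aut^s(X)$.
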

The proof is based on an application of the global Torelli theorem for hyperk\"ahler manifolds, which reduces the problem to classifying finite groups of isometries of a given even lattice. The result of Mongardi \cite[Theorem]{giosympaut} relates groups of birational transformations with certain finite groups acting on the Leech lattice, which have been classified by H\"ohn--Mason \cite{HMLeech}. The core of the proof is computer aided: we refer to \autoref{appendic lattice computation} for more details.\smallskip

Many of the groups of automorphisms described in \autoref{tab: lovely table 2} can be realized as groups of natural automorphisms on the Hilbert scheme of points on a K3 surface. An interesting problem is to produce explicit examples of manifolds of \(\KTcube\)--type with a symplectic action of a large group, especially of groups with maximal associated coinvariant sublattice. We realize geometrically some of the groups in the classification as groups of symplectic automorphisms (or birational automorphisms) of (desingularized) double EPW-cubes. \smallskip

Let \(V_6\) be a \(6\)-dimensional complex vector space and consider \(\LG\), the Grassmannian of Lagrangian subspaces of \(\bigwedge^3 V_6\). 
In \cite{iliev2019epw}, for a given Lagrangian subspace \([A]\subset\bigwedge^3 V_6\), the authors define an associated variety called EPW-cube\footnote{The fact that its double cover is, in general, a manifold of \(\KTcube\)--type is where the "cube" in the EPW-cube name comes from, as opposed to EPW-sextics which are named based on degree of the variety.} \(\cub{2}\subset\Gr\) as the degeneracy locus of a certain map of vector bundles. They show that if \([A]\not\in(\Sigma\cup\Gamma)\), where \(\Sigma,\Gamma\subset \LG\) are two distinguished divisors which share no common components, see \autoref{distinguished_divisors},
then there exists a natural double cover \(\dcub{2}\rightarrow\cub{2}\) which is a hyperkähler manifold of \(\KTcube\)--type called a double EPW-cube.
According to \cite{debarre2020double}, there is a natural double cover of an EPW-cube even in the case \([A]\in\Gamma\setminus\Sigma\), but it is singular in a finite number of points. From the recent work of Rizzo \cite{rizzo}, we know that for \([A]\in\Gamma\setminus\Sigma\) there is a projective resolution \(p\colon X \rightarrow \dcub{2}\) where \(X\) is hyperkähler of \(\KTcube\)--type and \(p\) is given by \(\mathbb{P}^3\)--contractions. \smallskip

We describe the group of polarized automorphisms of a double EPW-cube associated to a Lagrangian space \([A]\not\in(\Gamma\cup\Sigma)\), see \autoref{automorphisms_double_cover}, in analogy to the case of double EPW-sextics in \cite[Appendices A \& B]{debarre2022gushel}. Moreover, we show that for \([A]\not\in\Sigma\), the stabilizer of $A$ under the action of $\PGL(V_6)$ on \(\LG\) induces symplectic birational automorphisms of the double EPW-cube or one of its desingularizations in the singular case.
As an application, we find explicit examples of double EPW-cubes of maximal Picard rank and with a symplectic action of a given finite group \(G\), induced by a linear representation of \(G\) on \(V_6\). 
The variety \(\dcub{2}\) has a natural polarization \(H\) which is the pullback of the polarization of \(\cub{2}\): we denote by \(\Aut_H(\dcub{2})\) the group of automorphisms preserving $H$.

\begin{thm}[{\autoref{symmetric examples}}]\label{main_thm_3}
For any finite group \(G\) in \autoref{group_actions}, there exists a faithful projective representation \(G\rightarrow \PGL(V_6)\) and an associated \(G\)-invariant Lagrangian space \([A]\in\LG\setminus\Sigma\) so that
\begin{enumerate}
    \item  if \(G=\mathcal{A}_7, M_{10},L_2(11)\) then \([A]\not\in\Gamma\), and therefore the associated double EPW-cube \(X\defeq \dcub{2}\) is smooth. Moreover
    \[\Aut_H(X)\cong\left\{\begin{array}{cl} G\times \mathbb{Z}/4\mathbb{Z}& \text{ if $G=L_2(11)$}\\ G\times \mathbb{Z}/2\mathbb{Z}&\text{ if $G = \mathcal{A}_7,\, M_{10}$}\end{array}\right.,\]
    where \(G\) corresponds to the subgroup of symplectic automorphisms \(\Aut_H^s(X)\).
    \item  if \(G=L_3(4)\), then \([A]\in\Gamma\), and the associated double EPW-cube \(\dcub{2}\) is singular. In this case we let \(X\) be a projective symplectic resolution, and there is a group embedding \(G\hookrightarrow \Bir_H^s(X)\).
\end{enumerate}
    In any case, \(X\) is a hyperkähler manifold of \(\KTcube\)--type, the transcendental lattice \(T(X)\) is given in \autoref{group_actions}, and the entry \textnormal{Regular} is \textnormal{True} if the action of \(G\) on \(X\) is regular, \textnormal{False} otherwise.\vspace*{-3ex}
\end{thm}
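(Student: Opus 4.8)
The plan is to treat each group $G$ in \autoref{group_actions} by the same three--step scheme: realize $G$ projectively on a six--dimensional space, produce from this a $G$--invariant Lagrangian of $\bigwedge^3V_6$ avoiding $\Sigma$, and then read the automorphism group and the transcendental lattice off \autoref{automorphisms_double_cover} and \autoref{class_symplectic_groups}. For the first step I would fix, for each $G$, a faithful six--dimensional projective representation $\widetilde G\to\mathrm{GL}(V_6)$: the standard representation of $\mathcal{A}_7$ (a summand of the permutation representation on seven points), one of the two six--dimensional representations of $\mathrm{SL}_2(\mathbb{F}_{11})=2\cdot L_2(11)$, and a six--dimensional representation of a suitable central extension $\widetilde G$ of $M_{10}$ and of $L_3(4)$ (all available from the Atlas, or written out explicitly over a number field). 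Since $\mathrm{GL}(V_6)$ acts on $\bigwedge^3V_6$ preserving the pairing $\alpha\wedge\beta\in\bigwedge^6V_6\cong\mathbb{C}$ up to a scalar, and rescaling a symplectic form does not change which subspaces are isotropic, we obtain an action of $G$ on $\LG$, hence on the EPW--cube attached to any $G$--invariant Lagrangian and on its double cover; this action is symplectic by the remark preceding the statement.

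For the second step I would decompose $\bigwedge^3V_6$ as a $\widetilde G$--module and exhibit a $G$--invariant Lagrangian submodule $A$ (such a submodule exists precisely when every self--dual irreducible constituent of $\bigwedge^3V_6$ occurs with even multiplicity; when several choices of $A$ are available, one picks one avoiding $\Sigma$), obtaining $[A]\in\LG$ defined over the same number field. One then checks, from the incidence conditions of \autoref{distinguished_divisors} — which bound the dimensions of the intersections of $A$ with the distinguished Lagrangians $v\wedge\bigwedge^2V_6$, for $[v]\in\mathbb{P}(V_6)$, and $\bigwedge^3V_5$, for hyperplanes $V_5\subset V_6$ — that $[A]\notin\Sigma$ in all four cases, that $[A]\notin\Gamma$ for $G=\mathcal{A}_7,M_{10},L_2(11)$, and that $[A]\in\Gamma\setminus\Sigma$ for $G=L_3(4)$; with $[A]$ explicit these are finite linear--algebra computations over the number field, carried out by computer. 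Granting them, \cite{iliev2019epw} yields that $X\defeq\dcub{2}$ is a smooth hyperkähler manifold of $\KTcube$--type for the first three groups, while for $L_3(4)$ the double cover is singular \cite{debarre2020double} and \cite{rizzo} provides a projective symplectic resolution $X\to\dcub{2}$, again of $\KTcube$--type.

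For the third step I would feed $[A]$ into \autoref{automorphisms_double_cover}, the structural description of $\Aut_H$ of a double EPW--cube in terms of the stabilizer of $[A]$ in $\PGL(V_6)$ and the covering datum. Once one knows this stabilizer is exactly $G$ (see the last paragraph), \autoref{automorphisms_double_cover} gives $\Aut_H(X)\cong G\times\mathbb{Z}/2\mathbb{Z}$ with $\Aut_H^s(X)=G$ for $G=\mathcal{A}_7,M_{10}$, and $\Aut_H(X)\cong L_2(11)\times\mathbb{Z}/4\mathbb{Z}$ for $G=L_2(11)$, the extra factor of order four being intrinsic to the double cover; for $G=L_3(4)$, where $X$ is only a resolution of the singular $\dcub{2}$, one instead invokes the remark preceding the statement to get $G\hookrightarrow\Bir_H^s(X)$. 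Finally, since $G$ acts symplectically, \autoref{class_symplectic_groups} together with Mongardi's correspondence with the Leech lattice \cite{giosympaut} (and \cite{HMLeech}) pins down the conjugacy class of $G$ in the isometry group of $H^2(X,\mathbb{Z})$ and hence its invariant sublattice; checking that this sublattice has rank two shows $\rho(X)$ is maximal and that $T(X)$ equals it, with the Gram matrix as recorded in \autoref{group_actions}, while the ``Regular'' entry records whether the $G$--action is biregular on $X$ — automatic in the smooth cases, and to be decided on the $\mathbb{P}^3$--contractions of \cite{rizzo} for $L_3(4)$.

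The hard part is the second step together with the claim $\PGL(V_6)_{[A]}=G$ used in the third: showing that the highly symmetric Lagrangian $[A]$ does not slip into $\Sigma$, and that no element of $\PGL(V_6)$ outside $G$ preserves the submodule $A$. The latter — establishing that $G$ is genuinely the full stabilizer, rather than merely a subgroup — is the delicate point; one can attack it either structurally, bounding $\Aut_H^s(X)$ through \autoref{class_symplectic_groups} and the maximality of the coinvariant lattice, or by an explicit finite computation with $[A]$ written out, the remainder of the argument being bookkeeping once the explicit $[A]$ is in hand.
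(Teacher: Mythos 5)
Your overall architecture (representation $\to$ invariant Lagrangian $\to$ membership in $\Sigma,\Gamma$ $\to$ automorphism group via \autoref{automorphisms_double_cover} and the lattice classification) matches the paper's, and you correctly flag that $\Aut_H^s(X)=G$ (rather than $\supseteq G$) is obtained from maximality in \autoref{class_symplectic_groups}. One genuine difference of route: for deciding whether $[A]\in\Gamma$ you propose direct computation in all four cases, whereas the paper only computes directly for $M_{10}$ and otherwise argues lattice-theoretically from \autoref{lattices_of_examples} --- $L_3(4)$ admits no \emph{regular} stable symplectic action with a $(4,2)$-polarization, which forces $[A]\in\Gamma$, while for $\mathcal{A}_7$ and $L_2(11)$ the coinvariant lattice contains no class of type $(-12,2)$ orthogonal to the invariant polarization, so no $\mathbb{P}^3$-contraction can exist and $[A]\notin\Gamma$. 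Your direct route is viable in principle but heavier (one must verify $\cub{4}=\emptyset$ over all of $\Gr$), and note that $\Gamma$ is defined by the intersections of $A$ with the spaces $\bigwedge^2U\wedge V_6$ for $3$-planes $U$, not with $\bigwedge^3V_5$ for hyperplanes $V_5$.

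The genuine gap is the factor $\mathbb{Z}/4\mathbb{Z}$ for $G=L_2(11)$, which you attribute to being ``intrinsic to the double cover'' without argument. \autoref{automorphisms_double_cover} only ever produces a $\mathbb{Z}/2\mathbb{Z}$ factor generated by the covering involution $\iota$, and --- more importantly --- its hypothesis that $A$ and $\delta(A)$ lie in different $\PGL(V_6)$-orbits \emph{fails} for the Klein Lagrangian: there is an involution $\varphi\in\PGL(V_6)$ of determinant $-1$ with $\delta(A)=(\bigwedge^3\varphi)(A)$. The order-$4$ element has to be constructed: the induced automorphism of the $\mathcal{O}_{\cub{2}}$-algebra $\mathcal{O}_{\cub{2}}\oplus\mathcal{R}_2(-2)$ defining the double cover acts by $-1$ on the relevant line bundle and must be corrected by multiplication by $i$ on $\mathcal{R}_2(-2)$, yielding a purely nonsymplectic $\widetilde{\delta}$ with $\widetilde{\delta}^2=\iota$; one then still has to check that $\varphi$ centralizes $\PGL(V_6)_A$ to get the direct product. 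Symmetrically, for $\mathcal{A}_7$ and $M_{10}$ you must \emph{exclude} such extra automorphisms before asserting $\Aut_H(X)\cong G\times\mathbb{Z}/2\mathbb{Z}$; the paper does this by observing that $T(X)$ admits only isometries of order at most $2$, so no purely nonsymplectic automorphism of order $>2$ can exist. Without these two pieces your third step cannot distinguish $G\times\mathbb{Z}/2\mathbb{Z}$ from $G\times\mathbb{Z}/4\mathbb{Z}$ in either direction.
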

{
\renewcommand
\arraystretch{2}
\begin{table}[H]
\caption{Groups acting symplectically on double EPW-cubes of maximal Picard rank}\label{group_actions}\vspace*{0.3cm}
\centering
    \small{\begin{tabular}{|c||c|c|c|c|c}
\hline
 $G$& $L_3(4)$&$\mathcal{A}_7$ & $M_{10}$ & $L_2(11)$ \\
 \hline
 Regular&False&True&True&True\\
 \hline
 $\T(X)$& $\begin{pmatrix}10&4\\4&10\end{pmatrix}$ &$\begin{pmatrix}6&0\\0&70\end{pmatrix}$ & $\begin{pmatrix}4&0\\0&30\end{pmatrix}$ & $\begin{pmatrix}22&0\\0&22\end{pmatrix}$\\
 \hline
\end{tabular}}

\end{table}}
These examples are constructed similarly to the very symmetric double EPW-sextics in \cite{debarre2022gushel,billi2022double}, 
the description of transcendental lattices uses similar techniques to the case of K3 surfaces in \cite{brandhorst-hashimoto} and to the case of hyperk\"ahler manifolds of \(\KTsquare\)--type in \cite{wawak2022very}. These groups of automorphisms cannot be realized as natural groups of automorphisms on an Hilbert scheme of points on a K3 surface, and are certainly not the only groups that can be realized on double EPW-cubes.\smallskip

The structure of the paper is the following. In \autoref{sec: prelim} we recall known facts about lattices, hyperkähler manifolds and double EPW-cubes. In \autoref{appendic lattice computation} we give a partial classification of groups of symplectic birational automorphisms of hyperk\"ahler manifolds of \(\KTcube\)--type. In \autoref{sec describe aut} we describe the group of polarized automorphisms of a smooth double EPW-cube associated to general enough Lagrangian spaces. In \autoref{sec general bir} we compute the full group of birational automorphisms of the general (desingularized) double EPW-cube associated to a general $[A]\notin\Sigma$. Finally, in \autoref{sec very sym}, we exhibit examples of symmetric (desingularized) double EPW-cubes of maximal Picard rank.
In \autoref{sec appendix}, we display in a table the results from the classification procedure described in \autoref{appendic lattice computation}.

\section*{Acknowledgements}
We would like to thank Francesco Denisi and Marco Rampazzo for discussing part of the project at its early stage. We would also like to thank Simon Brandhorst, Grzegorz Kapustka, Michał Kapustka and Giovanni Mongardi for their guidance and fruitful discussions, as well as Francesca Rizzo for useful comments on an early version of the article. Finally, we would like to thank referees for their comments and suggestions which helped us improve the quality of the paper.

\section{Preliminaries and notations}\label{sec: prelim}
In this section we collect preliminary notions and notations about lattice theory, hyperkähler manifolds, double EPW-sextics and double EPW-cubes.
\subsection{Lattice theory}
We define a \textit{lattice} as a finitely generated free $\mathbb{Z}$-module $L$ equipped with a nondegenerate bilinear form $b: L\times L\to \mathbb{Z}$. 
We often omit $b$ and use the notation $v^2 \defeq  b(v, v)$ for $v\in L$. A lattice \(L\) is \textit{even} if for any \(v\in L\) we have \(v^2\in2\mathbb{Z}\). The \textit{rank} of a lattice is the rank of the lattice as a \(\mathbb{Z}\)-module, and its \textit{signature} is the signature of the real form associated to it.

For a lattice $L$ and a vector $v\in L$, the \textit{divisibility} $\divisibility(v, L)$ is the positive generator of the ideal $b(v, L)\subseteq\mathbb{Z}$. 
Notice that $\divisibility(v, L)$ is the largest positive integer $\gamma$ such that $v/\gamma\in L^\vee\defeq\Hom_{\mathbb{Z}}(L,\mathbb{Z})$.
For $v\in L$, we call the pair $(v^2, \divisibility(v, L))$ the \textit{type} of $v$ (with respect to $L$).
    
Given a lattice \(L\), the finite abelian group $D_L\defeq  L^\vee/L$ is called the \textit{discriminant group} of $L$.
If the lattice $L$ is even, the group $D_L$ is equipped with a finite quadratic form taking values in \(\mathbb{Q}/2\mathbb{Z}\). There is a natural map between the groups of isometries
\[\pi:\bO(L)\to \bO(D_L)\]
whose kernel is denoted by $\bO^\#(L)$. Any isometry $f\in \bO^\#(L)$ or subgroup of isometries $G\leq \bO^\#(L)$ is called \emph{stable}.

Let $L$ be a lattice and let $g\in \bO(L)$. Since $\bO(L\otimes \mathbb{R})$ is generated by reflections, by the Cartan-Dieudonné theorem we can expressed $g$ over the reals as the composition of reflections along non-isotropic vectors $v_1, \ldots, v_r\in L\otimes \mathbb{R}$. We define the \emph{real spinor norm} of $g$, denoted $\sigma(g)$, as the class modulo $(\mathbb{R}^\times)^2$ of the product of the $-v_i^2/2\in\mathbb{R}^\times$. In this way, one obtains that $-\id_L = (-1)^p$ if $L$ has signature $(p, n)$. This defines a morphism
\[\sigma\colon \bO(L)\to \mathbb{R}^\times/(\mathbb{R}^\times)^2,\]
called \emph{spinor norm morphism}, whose kernel is denoted by $\bO^+(L)$. We also denote $\widetilde{\bO}(L) := \bO^+(L)\cap \bO^\#(L)$.

For \(G\leq \bO(L)\) a group of isometries, we define \(L^G\defeq \{v\in L\;\mid\; g(v)=v,\; \forall g\in G\}\) the \emph{invariant sublattice} and \(L_G\defeq (L^G)^\perp\) the \emph{coinvariant sublattice}.
 For a chain of subgroups $H\leq G\leq \bO(L)$, we define the \emph{saturation} of $H$ in $G$ to be the pointwise stabilizer of $L^H$ in $G$. We say $H$ is \emph{saturated} in $G$ if it coincides with its saturation. We often also say that $H$ is \emph{stably saturated} in $G$ if $H^\#$ is saturated in $G^\#$. If $G$ is understood from the context, or if $G = \bO(L)$, we simply say $H$ saturated or stably saturated.
 
\begin{lem}\label{lem stab sat}
    Let $L$ be an even lattice and let $G\leq \bO^\#(L)$. Then $G$ is stably saturated if and only if $\Img(G\to \bO(L_G)) = \bO^\#(L_G)$.
\end{lem}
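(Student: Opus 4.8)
The plan is to realise the saturation of $G$ inside $\bO^\#(L)$ as a subgroup of $\bO(L_G)$ via restriction, and then to show that this subgroup is always equal to $\bO^\#(L_G)$, regardless of $G$. Write $M\defeq L^G$ and $N\defeq L_G=M^\perp$; these are primitive sublattices of $L$, their direct sum $M\oplus N$ has finite index in $L$ (as $G$ is finite), and $L\otimes\mathbb{Q}$ is the orthogonal direct sum of $M\otimes\mathbb{Q}$ and $N\otimes\mathbb{Q}$. Since $G\leq\bO^\#(L)$, unwinding the definitions shows that $G$ is stably saturated exactly when it equals its saturation in $\bO^\#(L)$, i.e. the pointwise stabiliser $S$ of $M$ inside $\bO^\#(L)$. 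Every $\phi\in S$, hence every $\phi\in G$, fixes $M$ pointwise and so preserves $N$, so restriction to $N$ defines a homomorphism $r\colon\{\phi\in\bO(L)\mid\phi|_M=\mathrm{id}_M\}\to\bO(N)$; it is injective because such a $\phi$ is determined by its values on the finite-index sublattice $M\oplus N$, and $r|_G$ is precisely the map $G\to\bO(L_G)$ of the statement, so that $r(G)=\Img(G\to\bO(L_G))$. Because $G\subseteq S$ and $r$ is injective, $G=S$ if and only if $r(G)=r(S)$, so it suffices to prove $r(S)=\bO^\#(L_G)$.

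The key step, and the one I expect to require the most care, is the claim that for $\phi\in\bO(L)$ with $\phi|_M=\mathrm{id}_M$ and $\psi\defeq\phi|_N$, one has $\phi\in\bO^\#(L)$ if and only if $\psi\in\bO^\#(N)$. To prove it, observe that $M\oplus N\subseteq L$ forces $L^\vee\subseteq(M\oplus N)^\vee=M^\vee\oplus N^\vee$, so each $v\in L^\vee$ decomposes uniquely as $v=v_M+v_N$ with $v_M\in M^\vee$ and $v_N\in N^\vee$; since $\phi$ is the identity on $M\otimes\mathbb{Q}$ and acts as $\psi$ on $N\otimes\mathbb{Q}$, we get $\phi(v)-v=\psi(v_N)-v_N$. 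Primitivity of $N$ in $L$ yields $(N\otimes\mathbb{Q})\cap L=N$, and also that the restriction map $L^\vee\to N^\vee$ dual to $N\hookrightarrow L$ is surjective; under the identifications above this map is nothing but $v\mapsto v_N$, so $\{v_N\mid v\in L^\vee\}=N^\vee$. Combining these observations, $\phi$ acts trivially on $D_L=L^\vee/L$ if and only if $\psi(y)-y\in N$ for every $y\in N^\vee$, that is, if and only if $\psi$ acts trivially on $D_N$. This proves the claim.

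Granting the claim, the inclusion $r(S)\subseteq\bO^\#(L_G)$ is immediate. For the reverse inclusion, take any $\psi\in\bO^\#(L_G)$ and set $\phi\defeq\mathrm{id}_M\oplus\psi$, extended $\mathbb{Q}$-linearly to $L\otimes\mathbb{Q}$; for $v=v_M+v_N\in L$ one has $\phi(v)=v+(\psi(v_N)-v_N)$ with $\psi(v_N)-v_N\in N$ because $\psi\in\bO^\#(N)$ and $v_N\in N^\vee$, so $\phi(L)\subseteq L$, and running the same argument with $\psi^{-1}$ gives $\phi\in\bO(L)$. This $\phi$ fixes $M$ pointwise, restricts to $\psi$ on $N$, and lies in $\bO^\#(L)$ by the claim, so $\phi\in S$ and $\psi=r(\phi)\in r(S)$. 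Therefore $r(S)=\bO^\#(L_G)$, and we conclude that $G$ is stably saturated if and only if $\Img(G\to\bO(L_G))=\bO^\#(L_G)$. Everything apart from the claim is routine bookkeeping with the finite-index inclusion $M\oplus N\subseteq L$ and the primitivity of $N=L_G$.
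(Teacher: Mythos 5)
Your proof is correct, but it takes a genuinely different route from the paper's. The paper invokes Nikulin's theory of primitive embeddings: in the unimodular case it uses the anti-isometry $D_{L^G}\cong D_{L_G}(-1)$ governing the primitive extension $L^G\oplus L_G\subset L$ together with Nikulin's Corollary~1.5.2 to identify isometries of $L$ fixing $L^G$ pointwise with elements of $\bO^\#(L_G)$, and in the general case it reduces to the unimodular one by embedding $L$ into an even unimodular overlattice and extending $G$ equivariantly by the identity on the complement. You instead argue directly on the finite-index sublattice $L^G\oplus L_G\subseteq L$: your key claim --- that an isometry $\phi$ fixing $L^G$ pointwise lies in $\bO^\#(L)$ if and only if $\phi|_{L_G}\in\bO^\#(L_G)$ --- is established by decomposing elements of $L^\vee$ via the orthogonal projections to $(L^G)^\vee$ and $(L_G)^\vee$, using primitivity of $L_G$ in $L$ and the surjectivity of the restriction map $L^\vee\to (L_G)^\vee$; the extension of a given $\psi\in\bO^\#(L_G)$ to $\id_{L^G}\oplus\psi\in\bO(L)$ is checked by the same computation. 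All the supporting steps are sound, including the injectivity of restriction on the pointwise stabilizer and the identification of stable saturation with equality to that stabilizer inside $\bO^\#(L)$. What your approach buys is a uniform, self-contained argument with no unimodular/non-unimodular case distinction and no appeal to discriminant-form gluing or to the existence of a unimodular overlattice; what the paper's approach buys is brevity, by outsourcing exactly the computation you carry out to Nikulin's standard results.
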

\begin{proof}
    If $L$ is unimodular, the statement is clear: the groups \(D_{L^G}\) and \(D_{L_G}\) are anti-isometric and in this case the anti-isometry giving the primitive extension $L^G\oplus L_G\subset L$ is given by an isometry \(D_{L^G}\cong D_{L_G}(-1)\) \cite[Corollary 1.6.2]{nikulin}. Thus, according to \cite[Corollary 1.5.2]{nikulin}, any isometry of $L$ fixing $L^G$ is of the form $\id_{L^G}\oplus g$ where $g\in \bO^\#(L_G)$: in particular, $G$ is stably saturated if and only if $G$ restricts to $\bO^\#(L_G)$. 
    
    If $L$ is not unimodular, we embed it into an even unimodular lattice $M$, and we denote by $F := L^\perp_M$ its orthogonal complement so that $M$ is an overlattice of $F\oplus L^G\oplus L_G$. Since $G$ is stable, there exists a group $\widetilde{G} \leq \bO(M)$ which fixes $F$ and restrict to $G$ on $L$ \cite[Corollary 1.5.2]{nikulin}. Note that $L_G = M_{\widetilde{G}}$, and the saturation of $\widetilde{G}$ in $\bO(M)$, which fixes both $F$ and $L^G$ by definition of $\widetilde{G}$, restricts on $L$ to the saturation of $G$ in $\bO^\#(L)$. From the first part of the proof, we deduce that $G\xrightarrow{\cong}\widetilde{G}\to \bO^\#(L_G)$ is injective, and it is bijective if and only if $G$ is saturated in $\bO^\#(L)$.
\end{proof}

We denote by \(\bA_i,\bD_i, \bE_i\) the ADE root lattices, that are considered to be negative definite. We often describe a lattice $L$ by a Gram matrix associated to one of its bases. 
For instance, we denote by
\[\bU\defeq  \begin{pmatrix}
    0&1\\1&0
\end{pmatrix}\]
the integer hyperbolic plane lattice. 
Moreover, given a nonzero integer $n$ we denote by $[n]$ the rank one lattice $\mathbb{Z}v$ such that $v^2=n$.

\subsection{Hyperkähler manifolds}
A hyperkähler manifold is a simply-connected compact K\"ahler manifold \(X\) such that $\Homology^{2,0}(X) = \Homology^0(X, \bigwedge\nolimits^2\Omega_X) \cong \mathbb{C}\sigma_X$ where $\sigma_X$ is a nowhere degenerate holomorphic 2-form. 
For a hyperkähler manifold \(X\), the finitely generated free $\mathbb{Z}$-module $\Homology^2(X, \mathbb{Z})$ has a lattice structure given by the Beauville-Bogomolov-Fujiki form, 
of real signature \((3,b_2(X)-3)\). 
The N{\'e}ron-Severi group of $X$, which in this case is a lattice, is defined as
\begin{align*}
    \NS(X) \defeq  \Homology^{2}(X, \mathbb{Z}) \cap \Homology^{1,1}(X) \subset \Homology^2(X, \mathbb{C}).
\end{align*}
The transcendental lattice $\T(X)$ is its orthogonal complement with respect to the associated bilinear form. For the general theory of hyperkähler manifolds, we refer for example to \cite{debarre2022hyper}.

Given a hyperk\"ahler manifold $X$, the natural orthogonal representation
\[\rho_X\colon \Bir(X)\to \bO^+(\Homology^2(X, \mathbb{Z}))\]
has finite kernel and its image is completely described by the Strong Torelli theorem \cite[Theorem 1.3]{markman2011survey}. A birational automorphism $f\in \Bir(X)$ is called \emph{symplectic} if $f^\ast\sigma_X = \sigma_X$. We denote by $\Bir^s(X)$ and $\Aut^s(X)$ the groups of symplectic birational and symplectic regular automorphisms of $X$ respectively.

A birational automorphism $g\in\Bir(X)$ (resp. a subgroup $G\leq \Bir(X)$ of such) is called \emph{stable} if so is $\rho_X(f)$ (resp. $\rho_X(G)$). In a similar vain, we call a subgroup $G\leq\Bir(X)$ \emph{stably saturated} if so is $\rho_X(G)\leq \bO^+(\Homology^2(X, \mathbb{Z}))$.\bigskip

Finally, recall that if \(X\) is a manifold of K3\(^{[n]}\)--type, then we have an isometry \(\eta\colon\Homology^2(X,\mathbb{Z})\simeq \bL_{\textnormal{K3}^{[n]}}\) where \(\bL_{\textnormal{K3}^{[n]}}\defeq \bU^{\oplus 3}\oplus\bE_8^{\oplus2}\oplus[-2(n-1)]\).
The representation map \(\rho_X\) is injective if \(X\) is of K3\(^{[n]}\)--type \cite{beauville1983varietes} and hence
we can study the birational automorphisms of any $X$ of $\textnormal{K3}^{[n]}$--type by studying the isometries of \(\bL_{\textnormal{K3}^{[n]}}\). In this case we will simply write \(G\) instead of \(\rho_X(G)\).

\subsection{Symplectic automorphisms of manifolds of \texorpdfstring{\(\KTcube\)}{K3[3]}--type}
We recall the notions of prime exceptional divisors and wall divisors for manifolds of \(\KTcube\)--type, which play a crucial role in the lattice-theoretical classification of groups of symplectic birational transformations.

\begin{defin}[{{{\cite{hassett2010intersection,Markman_prime_exceptional}}}}]\label{wall_divisors}
The set of \emph{numerical prime exceptional divisors} for hyperk\"ahler manifolds of \(\KTcube\)--type is given by
\begin{align*}
\mathcal{W}_{\KTcube}^{pex}\defeq \left\{v\in \bL_{\KTcube}\mid (v^2,\divisibility(v,\bL_{\KTcube}))=(-2,1),(-4,2),(-4,4)\right\}.
\end{align*}
The set of \emph{numerical wall divisors} for hyperk\"ahler manifolds of \(\KTcube\)--type is given by
\begin{align*}
\mathcal{W}_{\KTcube}&\defeq \mathcal{W}_{\KTcube}^{pex}\cup\left\{v\in \bL_{\KTcube}\mid (v^2,\divisibility(v,\bL_{\KTcube}))=(-12,2),(-36,4)\right\}.\end{align*}
\end{defin}
We denote by \(\mathbb{L}\) the \textit{Leech lattice}, the unique negative definite unimodular lattice of rank \(24\) without vectors of square \(-2\) and consider the \textit{Conway group} \(Co_1 \defeq \bO(\mathbb{L})/\{\pm \id_\mathbb{L}\}\).

\begin{prop}[\cite{giosympaut}]\label{wall-div_criterion}
    Let $G$ be a finite group.
    \begin{enumerate}        \item 
    There exists a hyperk\"ahler manifold $X$ of $\KTcube$--type with $G\leq \Aut^s(X)$ if and only if $G$ embeds in $Co_1$ and there exists a primitive embedding $\mathbb{L}_G\hookrightarrow\bL_{\KTcube}$ such that \( \mathbb{L}_G\cap \mathcal{W}_{\KTcube}=\emptyset.\)
    \item There exists a hyperk\"ahler manifold $X$ of $\KTcube$--type with $G\leq \Bir^s(X)$ stable if and only if $G$ embeds in $Co_1$ and there exists a primitive embedding $\mathbb{L}_G\hookrightarrow\bL_{\KTcube}$ such that \( \mathbb{L}_G\cap \mathcal{W}_{\KTcube}^{pex}=\emptyset.\)
    \end{enumerate}
\end{prop}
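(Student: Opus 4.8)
The plan is to mimic the argument of Mongardi for the $\textnormal{K3}^{[n]}$-type case (cf.\ \cite{giosympaut}) and adapt it to $\KTcube$, which only requires identifying the correct sets of wall/exceptional divisors — and these are recorded in \autoref{wall_divisors}. I will treat statement (1) in detail; statement (2) follows the same pattern with $\mathcal{W}_{\KTcube}$ replaced by $\mathcal{W}_{\KTcube}^{pex}$ and regular automorphisms replaced by birational ones.

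\emph{From a geometric action to the Leech lattice.} Suppose $G\leq \Aut^s(X)$ for some $X$ of $\KTcube$-type. Since $G$ acts symplectically, it acts trivially on $\Homology^{2,0}(X)\oplus\Homology^{0,2}(X)$, hence on the transcendental lattice $\T(X)$; therefore the coinvariant lattice $\Homology^2(X,\mathbb{Z})_G$ is negative definite and, being contained in $\NS(X)$, contains no classes of square $-2$ fixed by a prime exceptional divisor. One then shows $\Homology^2(X,\mathbb{Z})_G$ together with the induced $G$-action embeds $G$-equivariantly into the Leech lattice $\mathbb{L}$ with $\mathbb{L}_G \cong \Homology^2(X,\mathbb{Z})_G$; this is precisely where the Höhn--Mason analysis \cite{HMLeech} and Mongardi's embedding result are invoked. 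This yields an embedding $G\hookrightarrow \bO(\mathbb{L})$, and since $G$ is symplectic it acts trivially on the (trivial) discriminant, so in fact $G\hookrightarrow \bO(\mathbb{L})$; one checks $-\id_\mathbb{L}\notin G$ unless $G$ is trivial, so $G\hookrightarrow Co_1$. Finally the primitive embedding $\mathbb{L}_G = \Homology^2(X,\mathbb{Z})_G \hookrightarrow \Homology^2(X,\mathbb{Z})\cong \bL_{\KTcube}$ avoids $\mathcal{W}_{\KTcube}$: by Markman's description of the movable and nef cones, the existence of a regular symplectic action forces the coinvariant lattice to meet neither the prime exceptional divisors nor the wall divisors, since otherwise a $G$-invariant ample class could not exist.

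\emph{From a lattice embedding to a geometric action.} Conversely, given $G\hookrightarrow Co_1$ and a primitive embedding $j\colon \mathbb{L}_G\hookrightarrow \bL_{\KTcube}$ with $\mathbb{L}_G\cap \mathcal{W}_{\KTcube}=\emptyset$, extend the $G$-action from $\mathbb{L}_G$ to all of $\bL_{\KTcube}$ by acting trivially on $j(\mathbb{L}_G)^\perp$; this is a well-defined isometry since $G$ acts stably on $\mathbb{L}_G$ (its discriminant representation is trivial because $\mathbb{L}$ is unimodular), using \cite[Corollary 1.5.2]{nikulin}. Because $j(\mathbb{L}_G)^\perp$ has signature $(3, \ast)$ and is preserved pointwise, we may choose a $G$-invariant period $\sigma_X$ and a $G$-invariant Kähler-type class in it, and invoke the surjectivity of the period map together with the Strong Torelli theorem \cite{markman2011survey} to produce a hyperkähler $X$ of $\KTcube$-type with a faithful action of $G$ on $\Homology^2(X,\mathbb{Z})$ realizing the prescribed isometries. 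The action is symplectic since $G$ fixes $\sigma_X$. It remains to upgrade the birational action to a regular one: since $\mathbb{L}_G$ avoids all of $\mathcal{W}_{\KTcube}$ (not just the exceptional part), there is a $G$-invariant class in the interior of the nef cone, so after conjugating by an element of the monodromy group the action becomes biregular — this is the standard argument using that $\mathcal{W}_{\KTcube}$ cuts out precisely the walls of the nef chamber inside the positive cone.

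\emph{Main obstacle.} The delicate point is the equivalence between "the coinvariant lattice meets no wall divisor" and "the symplectic birational action can be chosen regular (resp.\ realized at all)". This rests on the precise identification of $\mathcal{W}_{\KTcube}$ and $\mathcal{W}_{\KTcube}^{pex}$ with the walls of the movable/nef cones for $\KTcube$-type, which in turn depends on \cite{hassett2010intersection, Markman_prime_exceptional} and the MMP for hyperkähler manifolds; everything else is a faithful transcription of the $\textnormal{K3}^{[n]}$ argument. I would isolate this cone-theoretic input as a cited black box and keep the proof itself short.
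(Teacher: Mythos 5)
The paper does not prove this proposition: it is quoted verbatim from Mongardi \cite{giosympaut}, so there is no internal proof to compare against. Your sketch is a faithful outline of the strategy of the cited source (coinvariant lattice negative definite and root-free, hence gluing into the Leech lattice; conversely Nikulin extension plus surjectivity of the period map and Torelli, with the wall-divisor condition governing the existence of an invariant K\"ahler class), and the two genuinely hard inputs — the equivariant embedding of the coinvariant lattice into $\mathbb{L}$ and the identification of $\mathcal{W}_{\KTcube}$, $\mathcal{W}_{\KTcube}^{pex}$ with the walls of the K\"ahler and birational K\"ahler chambers — are correctly isolated as citations. Two small imprecisions: in part (2) the hypothesis that $G$ is \emph{stable} is not decorative, it is exactly what gives the trivial discriminant action needed to glue $\Homology^2(X,\mathbb{Z})_G$ into a unimodular rank-$24$ lattice, and your sketch does not say where it enters; and the exclusion of $-\id_{\mathbb{L}}$ from $G$ follows simply from $\rk\,\mathbb{L}_G\leq 23<24$, so $\mathbb{L}^G\neq 0$, rather than from a case distinction on whether $G$ is trivial.
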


We remark that by \cite[Lemma 24]{giosympaut}, a group of symplectic automorphisms of a manifold of \(\KTcube\)--type is always stable. We call any finite subgroup $G\leq O^+(\bL_{\KTcube})$ \emph{symplectic} if $(\bL_{\KTcube})_G$ is negative definite and does not contain any vector of $\mathcal{W}_{\KTcube}^{pex}$.

\subsection{Double EPW-sextics and double EPW-cubes}
In this paper $V_6$ denotes a 6-dimensional complex vector space. 
Let $\bigwedge^3V_6\times \bigwedge^3V_6\to\mathbb{C}$ be the symplectic form induced by the conformal volume form $\bigwedge^6V_6\cong \mathbb{C}$, and denote by $\LG$ the Grassmannian of Lagrangian subspaces of $\bigwedge^3V_6$ with respect to the symplectic form.  

For \([v]\in \mathbb{P}(V_6)\) consider the vector space 
\[F_{v}\defeq v\wedge \bigwedge^2 V_6\]
and denote by \(F\) the vector bundle on \(\mathbb{P}(V_6)\) whose fiber over a point \([v]\) is given by \(F_{v}\). 
Let \([A]\in\LG\) be a Lagrangian space and, for any integer \(k\geq 0\), set 
\[  Y_A[k]\defeq \{[v]\in\mathbb{P}(V_6)\mid \dim(A\cap F_{v})\geq k \}.\]
The variety \(Y_A[1]\) is called an \textit{EPW-sextic}.

Similarly, for $[U]\in\textnormal{Gr}(3, V_6)$ consider the vector space
\[ T_{U} \defeq  \bigwedge^2U\wedge V_6\]
and denote by \(T\) the vector bundle on \(\Gr\) whose fiber over a point \([U]\) is given by \(T_{U}\). 
Let \([A]\in\LG\) be a Lagrangian space and, for any integer \(k\geq 0\), set 
\[  \textnormal{Z}_A[k]\defeq \{[U]\in\Gr\;\mid\; \dim(A\cap T_{U})\geq k \}.\]
The variety \(Z_A[2]\) is called an \textit{EPW-cube}.

According to \cite{o2010double,iliev2019epw}, the following sets 
\begin{align}\label{distinguished_divisors}
\Sigma &\defeq \{[A]\in \LG\;\mid\; \mathbb{P}(A)\cap\textnormal{Gr}(3, V_6) = \emptyset\},\nonumber\\ 
\Delta &\defeq  \{[A]\in\LG\;\mid\; Y_A[3]\neq\emptyset\},\quad  \\ 
\Gamma &\defeq  \{[A]\in\LG\;\mid\; \cub{4} \neq\emptyset\},\nonumber
\end{align}
are divisors in \(\LG\) and pairwise share no common component.
\begin{prop}[\cite{o2010double,iliev2019epw,debarre2020double,rizzo}]\label{epws}
Let \([A]\in\LG\setminus\Sigma\).
\begin{enumerate}
    \item There is a canonical double cover \(\pi_A^Y\colon\widetilde{Y}_A[1]\rightarrow Y_A[1]\) branched along \(Y_A[2]\) with a natural polarization \(h_A\defeq (\pi_A^Y)^*\mathcal{O}_{Y_A[1]}(1)\), whose singular locus is given by \((\pi_A^Y)^{-1}(Y_A[3])\) and consists of a finite set of points.
    If \([A]\not\in\Delta\), then \((\widetilde{Y}_A[1], h_A)\) is a smooth polarized hyperkähler manifold of \(\KTsquare\)--type with \(h_A^2=2\) and \(\divisibility(h_A, \bL_{\KTsquare})=1\). If \([A]\in\Delta\setminus\Sigma\) then there is a symplectic projective resolution \(\widetilde{Y}_A[1]^\epsilon\rightarrow\widetilde{Y}_A[1]\) given by \(\mathbb{P}^2\)--contractions and \(\widetilde{Y}_A[1]^\epsilon\) is a smooth quasipolarized hyperk\"ahler manifold of \(\KTsquare\)--type.

    \item There is a canonical double cover \(\pi_A^Z\colon\dcub{2}\rightarrow \cub{2}\) branched along \(\cub{3}\) with a natural polarization \(H_A\defeq (\pi_A^Z)^*\mathcal{O}_{Z_A[2]}(1)\), whose singular locus is given by \((\pi_A^Z)^{-1}(\cub{4})\) and consists of a finite set of points. 
    If \([A]\not\in\Gamma\), then \((\dcub{2},H_A)\) is a smooth polarized hyperkähler manifold of \(\KTcube\)--type with \(H_A^2=4\) and \(\divisibility(H_A, \bL_{\KTcube})=2\). If \([A]\in\Gamma\setminus\Sigma\) then there is a symplectic projective resolution \(\dcub{2}^\epsilon\rightarrow\dcub{2}\) given by \(\mathbb{P}^3\)--contractions and \(\dcub{2}^\epsilon\) is a smooth quasi-polarized hyperk\"ahler manifold of \(\KTcube\)--type.
\end{enumerate}
\end{prop}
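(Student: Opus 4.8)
The plan is to prove both parts by the same device — the sheaf-theoretic construction of the canonical double cover attached to a Lagrangian degeneracy locus — following \cite{o2010double,iliev2019epw,debarre2020double,rizzo}. First I would set up, in case (2), the morphism of rank-$10$ vector bundles on $\Gr$ obtained by globalising $[U]\mapsto A\cap T_U$: using that $\bigwedge^3 V_6 = A\oplus T_U$ for generic $U$, one gets a map $\varphi\colon T\to(\bigwedge^3 V_6/A)\otimes\mathcal{O}_{\Gr}$ whose $k$-th degeneracy locus is $\cub{k}$. Because $A$ is Lagrangian these degeneracy loci behave like those of a symmetric map, so, provided $[A]\notin\Sigma$ (which ensures $\varphi$ is nowhere identically degenerate), $\cub{k}$ has the expected codimension $\binom{k+1}{2}$ in $\Gr$; in particular $\cub{2}$ is $6$-dimensional and $\cub{4}$ is generically empty, its non-emptiness defining the divisor $\Gamma\subset\LG$. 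The cokernel sheaf $\mathcal{R}$ of $\varphi$, restricted to $\cub{2}$, carries a natural $\mathcal{O}_{\cub{2}}$-algebra structure induced by the wedge pairing $\bigwedge^3 V_6\times\bigwedge^3 V_6\to\mathbb{C}$, and $\dcub{2}$, defined as the relative spectrum over $\cub{2}$ of $\mathcal{O}_{\cub{2}}\oplus\mathcal{R}$ (suitably twisted), is the canonical double cover $\pi_A^Z$; it is étale where $\mathcal{R}$ is locally free and branched exactly where $\mathcal{R}$ drops rank, namely along $\cub{3}$. Case (1) is the strictly parallel construction on $\mathbb{P}(V_6)$ with $F$, $Y_A[k]$ and the divisor $\Delta$.

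Next I would run the local analysis of $\pi_A^Z$. Over $\cub{3}\setminus\cub{4}$ the degeneration of $\varphi$ is generic of corank one, so there $\pi_A^Z$ is étale-locally the standard double cover $z\mapsto z^2$ and $\dcub{2}$ is smooth; over $\cub{4}$ the corank jumps and a transverse-slice computation identifies $(\pi_A^Z)^{-1}(\cub{4})$ as precisely the singular locus of $\dcub{2}$, each such point having an analytic neighbourhood isomorphic to the closure of the minimal nilpotent orbit in $\mathfrak{sl}_4$ — a $6$-dimensional symplectic singularity. When $[A]\notin\Gamma$ this locus is empty and $\dcub{2}$ is smooth. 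The sextic case is identical, with the closure of the minimal nilpotent orbit in $\mathfrak{sl}_3$ (a $4$-dimensional singularity) appearing over $Y_A[3]$, and smoothness when $[A]\notin\Delta$.

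To identify the deformation type in the smooth case I would check that $\dcub{2}$ is a smooth projective variety with $h^{1,0}=0$ carrying an everywhere non-degenerate holomorphic two-form (induced by the symplectic form on $\bigwedge^3 V_6$), which one verifies to be irreducible holomorphic symplectic; then, computing the Betti and Hodge numbers from a locally free resolution of $\mathcal{R}$ and exhibiting a deformation to — or an explicit birational comparison with — a manifold already known to be of $\KTcube$-type, one concludes by deformation invariance that $\dcub{2}$ is of $\KTcube$-type. This last step is the technical core of \cite{iliev2019epw}. The polarization $H_A=(\pi_A^Z)^\ast\mathcal{O}_{\cub{2}}(1)$ is then pinned down by the projection formula together with the Fujiki relation: from $\deg\cub{2}=480$ in the Plücker embedding and $\int_{\dcub{2}}H_A^6=2\cdot 480$ one reads off $H_A^2=4$, and $\divisibility(H_A,\bL_{\KTcube})=2$ because $\pi_A^Z$ has no rational section, so that $\frac12 H_A$ is not integral while $H_A$ pairs evenly with all of $\Homology^2(\dcub{2},\mathbb{Z})$. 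The statement for double EPW-sextics in (1) is the strictly parallel argument, giving $h_A^2=2$ and $\divisibility(h_A,\bL_{\KTsquare})=1$.

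Finally, for $[A]\in\Gamma\setminus\Sigma$ I would invoke \cite{debarre2020double} to see that the algebra-sheaf construction still yields a (now singular) double cover $\dcub{2}$, singular exactly at the finitely many points of $(\pi_A^Z)^{-1}(\cub{4})$ with the minimal-nilpotent-orbit singularity found above, and then \cite{rizzo} for a projective symplectic resolution $p\colon\dcub{2}^\epsilon\to\dcub{2}$ whose exceptional fibres are copies of $\mathbb{P}^3$ (the zero sections of the local models $T^\ast\mathbb{P}^3$); thus $\dcub{2}^\epsilon$ is a smooth hyperkähler manifold, of $\KTcube$-type because a symplectic resolution of a symplectic variety is deformation equivalent to the general fibre of a smoothing of that variety, which here lies in the family of smooth double EPW-cubes, while $p^\ast H_A$ is nef and big but trivial on the contracted $\mathbb{P}^3$'s, hence a quasi-polarization. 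The sextic case is the same with $\mathbb{P}^2$-contractions. The genuinely hard points — and the main obstacles — are not the degeneracy-locus bookkeeping but (i) pinning down the $\KTcube$-deformation type by a degeneration argument (\cite{iliev2019epw}), and (ii) the precise determination of the transverse singularity type together with the construction of its symplectic resolution (\cite{debarre2020double,rizzo}); the remainder is routine once these are in hand.
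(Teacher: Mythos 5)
The paper does not prove \autoref{epws}: it is stated as a compilation of results from \cite{o2010double,iliev2019epw,debarre2020double,rizzo}, so the only fair comparison is with the arguments in those references, and your outline does reconstruct their strategy faithfully --- Lagrangian degeneracy loci of expected codimension $\binom{k+1}{2}$, the double cover as the relative spectrum of $\mathcal{O}_{\cub{2}}\oplus\mathcal{R}$, the local models over $\cub{4}$ (resp.\ $Y_A[3]$) resolved by $T^\ast\mathbb{P}^3$ (resp.\ $T^\ast\mathbb{P}^2$), the deformation-type identification via a special birational model, and the Fujiki computation $15\cdot 4^3 = 2\cdot\deg\cub{2}$. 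You also correctly flag the genuinely hard inputs as the ones deferred to the references. The one step where the justification you offer would not survive scrutiny is the divisibility claim: ``$\pi_A^Z$ has no rational section, so $\tfrac12 H_A$ is not integral while $H_A$ pairs evenly with all of $\Homology^2$'' conflates non-divisibility of the class with divisibility of the pairing, and neither clause actually establishes $\divisibility(H_A,\bL_{\KTcube})=2$ (a primitive class of divisibility $2$ never has $H_A/2$ integral anyway, and the evenness of $H_A\cdot v$ for all $v$ is precisely what needs proof). In \cite{iliev2019epw} this is extracted from the explicit identification of a special double EPW-cube with (a birational model of) the Hilbert cube of a degree-10 K3 surface, where $H_A$ can be written in coordinates, together with deformation invariance of the numerical type; some such concrete input is unavoidable here, so you should replace the ``no rational section'' heuristic by a citation or by that computation.
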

The variety \(\widetilde{Y}_A[1]\) is called a \textit{double EPW-sextic}, and the variety \(\dcub{2}\) is called a \textit{double EPW-cube}.

\subsection{Moduli of double EPW-sextics and double EPW-cubes}\label{subsec: moduli epws}

Consider the lattice \( \Lambda\defeq  \bU^{\oplus 2}\oplus \bE_8^{\oplus 2}\oplus[-2]^{\oplus2}\).
Notice that by Eichler's criterion there is only one orbit of elements of given square and divisibility in the lattices \(\bL_{\textnormal{K3}^{[2]}}\) and \(\bL_{\KTcube}\). 
Let \(h\in\bL_{\KTsquare}\) be such that \(\Homology^2=2\) and \(\divisibility(h,\bL_{\KTsquare})=1\), and let \(H\in\bL_{\KTcube}\) be such that \(\Homology^2=4\) and \(\divisibility(H,\bL_{\KTcube})=2\).
Those vectors are unique up to isometry, and their orthogonal complement is uniquely determined to be \(h^\perp \simeq H^\perp\simeq \Lambda\). 

We recall that, according to \cite{o2016moduli,iliev2019epw}, there are moduli spaces of double EPW-sextics and double EPW-cubes 
\begin{align*}
    \mathcal{M}_{sex}&\defeq \left(\LG\setminus(\Sigma\cup\Delta)\right)//\PGL(V_6),\\
    \mathcal{M}_{cub}&\defeq \left(\LG\setminus (\Sigma\cup\Gamma)\right)//\Aut(\Gr),
\end{align*}
with respective period spaces 
\begin{align*}
\Omega_{sex}&\defeq \widetilde{\bO}(\Lambda)\backslash\{x\in \Lambda_ \mathbb{C}\;\mid\;x^2=0, \;x\cdot \overline{x}>0\},\\
\Omega_{cub}&\defeq \bO^+(\Lambda)\backslash\{x\in \Lambda_ \mathbb{C}\;\mid\;x^2=0, \;x\cdot \overline{x}>0\}
\end{align*}
\noindent\cite[Section 2.3]{LO19}.
Notice that $D_\Lambda\cong (\ZZ/2\ZZ)^{\oplus2}$ and $\bO(D_\Lambda)$ has order two: we obtain that $\widetilde{\bO}(\Lambda)$ has index 2 in $\bO^+(\Lambda)$. 
There are period maps
\begin{align*}
&\mathcal{P}_{sex}:\;\mathcal{M}_{sex}\rightarrow \Omega_{sex},\\
&\mathcal{P}_{cub}:\;\mathcal{M}_{cub}\rightarrow \Omega_{cub},
\end{align*}
associating to the double EPW-sextics and the double EPW-cubes their respective Hodge structures. 

\section{Stable symplectic groups for hyperk\"ahler manifolds of \texorpdfstring{\(\KTcube\)}{K3[3]}-- type}\label{appendic lattice computation}

The strategy to determine finite groups $G$ which act by stable symplectic birational automorphisms on a manifold of \(\KTcube\)--type is to apply \autoref{wall-div_criterion}. By Torelli theorem, such groups admit a faithful orthogonal representation on $\bL_{\KTcube}$ with negative definite coinvariant sublattice. Our problem thus translates into determining finite subgroups of $\bO^+(\bL_{\KTcube})$ with negative definite coinvariant sublattices which satisfy the conditions from \autoref{wall-div_criterion}.

\begin{rmk}\label{rem monodromy}
    In regard to the classification of triples $(X, \eta, G)$ (see for instance \cite[\S3.2]{brandhorst-hofmann}), where $X$ is a manifold of $\KTcube$--type, $\eta\colon \Homology^2(X, \Z)\xrightarrow{\simeq}\bL_{\KTcube}$ is a marking  and $G\leq \Aut^s(X)$, it is customary to classify finite symplectic subgroups of $\bO^+(\bL_{\KTcube})$ up to \emph{monodromy} conjugation. In the $\KTcube$ case, we have that the associated monodromy group $\textnormal{Mon}^2(\bL_{\KTcube}) = \bO^+(\bL_{\KTcube}) \leq \bO(\bL_{\KTcube})$ is maximal \cite[Theorem 1.2]{markmanmono}. Note moreover that given $X$ a manifold of $\KTcube$--type and a subgroup $G\leq \Bir(X)$, the representation of $G$ on $\bL_{\KTcube}$ is not unique and depends on a choice of a \emph{marking} $\eta\colon \Homology^2(X, \Z)\to \bL_{\KTcube}$. ssi
\end{rmk}

According to \cite[Lemma 24]{giosympaut} and \autoref{rem monodromy}, we only have to compute a list of representatives for the $\bO^+(\bL_{\KTcube})$-conjugacy classes of finite symplectic subgroups of $\widetilde{\bO}(\bL_{\KTcube})$ which are (stably) saturated: any other finite symplectic subgroup of $\widetilde{\bO}(\bL_{\KTcube})$ is conjugate to a subgroup of one of these.

Let $1\neq G\leq \widetilde{\bO}(\bL_{\KTcube})$ be a nontrivial finite symplectic subgroup, and let us denote by $C\subseteq \bL_{\KTcube}$ the associated coinvariant sublattice. 

\begin{lem}
    If the finite symplectic subgroup $1\neq G\leq \widetilde{\bO}(\bL_{\KTcube})$ is stably saturated, then its $\bO(\bL_{\KTcube})$-conjugacy class depends only on the $\bO(\bL_{\KTcube})$-orbit of $C\subseteq  \bL_{\KTcube}$.
\end{lem}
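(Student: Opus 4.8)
The plan is to upgrade the hypothesis ``stably saturated'' into a description of $G$ that is intrinsic to the pair $(\bL_{\KTcube}, C)$ — i.e.\ one not referring to $G$ itself — and then conclude by transport of structure. So suppose $G, G'\leq\widetilde{\bO}(\bL_{\KTcube})$ are stably saturated finite symplectic subgroups whose coinvariant sublattices $C=(\bL_{\KTcube})_G$ and $C'=(\bL_{\KTcube})_{G'}$ lie in the same $\bO(\bL_{\KTcube})$-orbit; we must show that $G$ and $G'$ are $\bO(\bL_{\KTcube})$-conjugate (the converse implication being trivial).

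First I would unravel the definitions. Since $G\leq\widetilde{\bO}(\bL_{\KTcube})\leq\bO^\#(\bL_{\KTcube})$, we have $G^\#=G$, so ``$G$ is stably saturated'' means exactly that $G$ equals the pointwise stabilizer of $\bL_{\KTcube}^G$ inside $\bO^\#(\bL_{\KTcube})$. As the invariant sublattice of a finite group is primitive — if $nv$ is $G$-fixed then so is $v$, because $\bL_{\KTcube}$ is torsion free — we get $\bL_{\KTcube}^G=(\bL_{\KTcube}^G)^{\perp\perp}=C^\perp$, and therefore
\[G=\{\psi\in\bO^\#(\bL_{\KTcube})\mid \psi|_{C^\perp}=\id_{C^\perp}\},\]
and symmetrically $G'=\{\psi\in\bO^\#(\bL_{\KTcube})\mid \psi|_{(C')^\perp}=\id_{(C')^\perp}\}$. (Equivalently, one may invoke \autoref{lem stab sat}: $G$ restricts to all of $\bO^\#(C)$ on $C$ and to the identity on $C^\perp$, and every such isometry of $C\oplus C^\perp$ extends uniquely to $\bL_{\KTcube}$ by \cite[Corollary 1.5.2]{nikulin}, so $G$ is determined by $C$.)

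Then I would conclude by conjugation. Pick $\phi\in\bO(\bL_{\KTcube})$ with $\phi(C)=C'$; being an isometry, $\phi$ also sends $C^\perp$ to $(C')^\perp$. Since $\bO^\#(\bL_{\KTcube})$ is the kernel of $\bO(\bL_{\KTcube})\to\bO(D_{\bL_{\KTcube}})$, it is normal in $\bO(\bL_{\KTcube})$, so $\phi\,\bO^\#(\bL_{\KTcube})\,\phi^{-1}=\bO^\#(\bL_{\KTcube})$. Conjugating the displayed description of $G$ by $\phi$ then produces precisely the pointwise stabilizer of $(C')^\perp$ in $\bO^\#(\bL_{\KTcube})$, which is $G'$; hence $\phi G\phi^{-1}=G'$, as required.

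I do not anticipate a genuine obstacle, as the argument is entirely formal. The only care needed is bookkeeping in the definition of ``stably saturated'': that the ambient group is $\bO(\bL_{\KTcube})$, that $G^\#=G$ because $G$ is already stable, and that $\bL_{\KTcube}^G$ is primitive. I would also remark that the proof never uses that $G$ is symplectic (that $C$ is negative definite and meets no numerical prime exceptional vector), so the same statement holds for any finite, stable, stably saturated subgroup of $\bO(\bL_{\KTcube})$.
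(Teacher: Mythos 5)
Your proposal is correct and follows essentially the same route as the paper: both arguments reduce to the observation that a stably saturated $G$ is completely determined by $C$ (you phrase this as ``$G$ is the pointwise stabilizer of $C^\perp$ in $\bO^\#(\bL_{\KTcube})$'', the paper phrases it via \autoref{lem stab sat} as ``$G$ restricts to all of $\bO^\#(C)$''), and then transport this description by an isometry carrying $C$ to $C'$, using that $\bO^\#(\bL_{\KTcube})$ is normal. Your unravelling of the definition, including the primitivity of $\bL_{\KTcube}^G$, is accurate, so there is nothing to correct.
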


\begin{proof}
    Since $G$ is stably saturated, we know that $G\cong \bO^\#(C)$ is uniquely determined by $C$ (\autoref{lem stab sat}). In particular, for all $f\in \bO(\bL_{\KTcube})$, we have that $fGf^{-1}$ fixes $(fC)^\perp$ and restricts to $\bO^\#(fC)$ on $fC$. Hence the result follows.
\end{proof}

By definition, $G$ fixes no nontrivial vectors in $C$, and in particular so does $\bO^\#(C)$. Moreover, \autoref{wall-div_criterion} tells us that such coinvariant sublattice $C$ embeds primitively into the Leech lattice, and since $O^\#(C)$ fixes no nontrivial vector in $C$, \autoref{lem stab sat} gives us that there exists a finite subgroup $1\neq G'\leq O(\mathbb{L})$ such that $\mathbb{L}_{G'}\simeq C$ and $G'\cong \bO^\#(C)$. Note that $\mathbb{L}^{G'}$ is nontrivial since $\rk(C)< \rk(\bL_{\KTcube}) = 23<24=\rk(\mathbb{L})$.\smallskip

Finite subgroups \(G\leq O(\mathbb{L})\) with nontrivial invariant sublattices are classified in \cite{HMLeech}, where the lattices \(\mathbb{L}_G\) are also given: we refer to the associated pairs $(\mathbb{L}_G, G)$ as \emph{Leech pairs}. Note that given a Leech pair $(\mathbb{L}_G,G)$, we can assume that $G$ is saturated in $\bO(\mathbb{L})=\bO^\#(\mathbb{L})$ meaning that $G\cong\bO^\#(\mathbb{L}_G)$ (\autoref{lem stab sat}). In order to get a complete list of representatives for the conjugacy classes of stable and stably saturated finite subgroups of symplectic birational automorphisms for hyperk\"ahler manifolds of \(\KTcube\)--type, it is equivalent to iterate the following procedure:

\begin{enumerate}
    \item choose a Leech pair $(\mathbb{L}_G, G)$,
    \item compute representatives for the orbits $\bO(\bL_{\KTcube})\cdot C$ where $\mathbb{L}_G\simeq C\subseteq \bL_{\KTcube}$ is primitive,
    \item for each such primitive sublattice $C\subseteq \bL_{\KTcube}$, determine whether \(C\cap \mathcal{W}_{\KTcube}^{pex}=\emptyset\),
    \item for each such primitive sublattice $C\subseteq \bL_{\KTcube}$ with \(C\cap \mathcal{W}_{\KTcube}^{pex}=\emptyset\), compute the finite subgroup of $\widetilde{\bO}(\bL_{\KTcube})$ which fixes $C^\perp$ pointwise and restricts to $\bO^\#(C)$ on $C$.
\end{enumerate}
For each Leech pair $(\mathbb{L}_G, G)$, one can determine representatives for the orbits in step 2 by following the usual procedure given by the proof of \cite[Proposition 1.15.1]{nikulin} (compare with \cite[Theorem 4.8]{marquand-muller}, and see \cite{brandhorst-hofmann} and references therein for computational comments).\smallskip

Checking the presence of wall divisors in a definite lattice, as in step 3 above, is a simple routine which goes back to finding \emph{short vectors} in a definite lattice, using for instance Fincke-Pohst algorithm \cite[\S2.7.3]{cohen}. 
This could be computationally demanding for vectors of large square in lattices of large rank: in order to simplify the procedure one can make use of the following.

\begin{lem}
\label{easy divisibility}
    Let $M\subseteq L$ be a primitive sublattice. The lattice $M$ contains a primitive vector \(v\) with \(v^2=d\) and \(\divisibility(v,L)=\gamma\) if and only if $M\cap \gamma L^\vee$ contains a primitive vector of square \(d\).
\end{lem}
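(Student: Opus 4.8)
The plan is to prove both implications of \autoref{easy divisibility} by a direct analysis of what it means for a vector $v\in M$ to have a prescribed divisibility in the ambient lattice $L$, exploiting the primitivity of the embedding $M\subseteq L$. Recall that $\divisibility(v,L)=\gamma$ is equivalent to saying $v/\gamma\in L^\vee$ but $v/\gamma'\notin L^\vee$ for any $\gamma'$ with $\gamma\mid\gamma'$, $\gamma\neq\gamma'$; equivalently $v\in\gamma L^\vee$ and $v$ is a primitive element of the lattice $M\cap\gamma L^\vee$ (the multiple-saturation condition). So the statement essentially repackages ``primitive vector of $M$ with divisibility $\gamma$ in $L$'' as ``primitive vector of the sublattice $M\cap\gamma L^\vee$''.

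First I would prove the forward direction. Suppose $v\in M$ is primitive in $M$ with $v^2=d$ and $\divisibility(v,L)=\gamma$. By definition of divisibility, $b(v,L)=\gamma\ZZ$, hence $v=\gamma w$ for a unique $w\in L^\vee$; thus $v\in\gamma L^\vee\cap M$. It remains to check that $v$ is primitive in the lattice $N\defeq M\cap\gamma L^\vee$. If not, write $v=k u$ with $u\in N$ and $k\geq 2$; then in particular $u\in M$, contradicting primitivity of $v$ in $M$. Hence $v$ is primitive in $N$ and has square $d$, proving one implication — note this direction does not even need the characterization of divisibility beyond $v\in\gamma L^\vee$, and in fact holds without assuming the embedding is primitive.

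For the converse, suppose $v\in N=M\cap\gamma L^\vee$ is primitive with $v^2=d$. Since $v\in\gamma L^\vee$, we have $v/\gamma\in L^\vee$, so $\gamma\mid\divisibility(v,L)$. I must show equality, i.e.\ $\divisibility(v,L)=\gamma$ exactly, and also that $v$ is primitive in $M$ (not merely in $N$). Suppose $\divisibility(v,L)=\gamma\delta$ with $\delta\geq 1$; then $v/(\gamma\delta)\in L^\vee$ as well, which would give $v\in\gamma\delta L^\vee\subseteq\gamma L^\vee$, and since $v\in M$ this yields $v\in N$ with $v=\delta\cdot(v/\delta)$ where $v/\delta=\gamma\delta\cdot(v/(\gamma\delta))\in\gamma L^\vee$ and $v/\delta\in M$ (as $v$ primitive in $M$ forces... ) — this is where I need to be careful: I should use that $M$ is primitive in $L$ to conclude $v/\delta\in M$ whenever $v/\delta\in M\otimes\QQ$ lies in $L$, and $v/\delta$ does lie in $L$ since $\divisibility(v,L)=\gamma\delta$ implies $v/\delta\in L$. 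Then $v/\delta\in M\cap\gamma L^\vee=N$, contradicting primitivity of $v$ in $N$ unless $\delta=1$. The same argument, applied with $\gamma=1$ (or directly), shows $v$ is primitive in $M$: if $v=k u$ with $u\in M$, $k\geq 2$, then $u\in M\subseteq\gamma L^\vee$... wait, one needs $u\in\gamma L^\vee$, which holds since $v\in\gamma L^\vee$ and primitivity in $L$ of $M$ gives $u=v/k\in L$, hence $u\in M\cap\gamma L^\vee=N$ (using $u/\ $ divides into $v/\gamma\in L^\vee$), again contradicting primitivity in $N$.

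The main obstacle, and the only delicate point, is the bookkeeping in the converse: one must cleanly separate the two conditions bundled into ``primitive vector of square $d$ in $N$'' — namely that $v$ is not a proper multiple in $M$ and that its divisibility in $L$ is not a proper multiple of $\gamma$ — and verify that the primitivity of the embedding $M\subseteq L$ is exactly what lets one promote ``$v/k\in L$'' to ``$v/k\in M$''. I expect the whole argument to fit in a short paragraph once this is organized; no short-vector enumeration or lattice classification is involved, so there is no genuine computational difficulty — the lemma is a purely formal reduction whose value lies in replacing a search for vectors of square $d$ and divisibility $\gamma$ in $L$ by a search for primitive vectors of square $d$ in the smaller (often much sparser) lattice $M\cap\gamma L^\vee$.
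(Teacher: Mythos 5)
Your forward implication is correct, and it is essentially the paper's (very terse) argument: for $v\in M$ primitive, $\divisibility(v,L)=\gamma$ holds if and only if $v/\gamma$ is a primitive element of $L^\vee$, i.e.\ if and only if $v$ is a primitive element of $\gamma L^\vee$, and a vector primitive in $M$ remains primitive in the sublattice $M\cap\gamma L^\vee$. The problem is the converse. You have correctly identified it as the delicate point, but your resolution contains a genuine error: you assert that $\divisibility(v,L)=\gamma\delta$ implies $v/\delta\in L$. This is false --- divisibility is measured in $L^\vee$, so all you may conclude is $v/\delta\in\gamma L^\vee\subseteq L^\vee$, and there is no reason for $v/\delta$ to lie in $L$ (take $L=[4]$ with generator $v$: then $\divisibility(v,L)=4$ but $v/2\notin L$). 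The same confusion underlies your claim that $v=ku$ with $u\in M$, $k\geq 2$, forces $u\in\gamma L^\vee$: that would require $k\gamma\mid\divisibility(v,L)$, which you do not know. Without these two steps you obtain neither $\divisibility(v,L)=\gamma$ nor primitivity of $v$ in $M$.

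These gaps cannot be repaired, because the ``if'' direction is false as literally stated. Two counterexamples: (i) $L=M=\bU$ with hyperbolic basis $e,f$, and $\gamma=2$; then $M\cap 2L^\vee=2\bU$ contains $2(e+f)$, which is primitive there and has square $8$, yet every primitive vector of $\bU$ has divisibility $1$, so there is no primitive vector of type $(8,2)$. (ii) $L=M=[4]$ with generator $v$, $\gamma=2$, $d=4$: here $M\cap 2L^\vee=M$ and $v$ is primitive of square $4$, but $\divisibility(v,L)=4\neq 2$. The first example shows that primitivity in $M\cap\gamma L^\vee$ does not imply primitivity in $M$; the second shows that membership in $\gamma L^\vee$ only yields $\gamma\mid\divisibility(v,L)$, not equality --- precisely the two points your argument fails to control. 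The paper's two-line proof is equally silent on the converse; the statement is harmless in its intended use (for the wall-divisor types the value of $d$ forces primitivity in any even lattice, and all admissible divisibilities are tested together, so only the correct ``only if'' direction is needed as a necessary condition), but as a free-standing equivalence the lemma requires additional hypotheses, and your proof of the converse does not establish it.
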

\begin{proof}
    Let $v\in M$ be primitive. 
    Then $\divisibility(v,L) = \gamma$ if and only if $v/\gamma\in L^\vee$ is primitive, if and only if $v\in \gamma L^\vee$ is primitive. 
    Since $v\in M$ is primitive and $M\subseteq L$ is primitive, we can conclude. 
\end{proof}

Finally, we ensure that the groups constructed in step 4 lie in $\widetilde{\bO}(\bL_{\KTcube})$ thanks to \autoref{lem stab sat}, and \cite[Lemma 2.3]{gov20} which tells us that since $\bL_{\KTcube}$ has odd positive signature, any finite subgroup $G\leq \bO(\bL_{\KTcube})$ with negative definite coinvariant sublattice lies in $\bO^+(\bL_{\KTcube})$.

\begin{thm}\label{class_symplectic_groups}
    Let \(X\) be a manifold of \(\KTcube\)--type and let \(G\leq \Bir^s(X)\) be a finite stable subgroup. Then there exists a marking $\eta\colon \Homology^2(X, \mathbb{Z})\to \bL_{\KTcube}$ via which $G$ embeds into one of the groups in \cite{database} (see also \autoref{tab: lovely table 2}). Moreover, for any finite subgroup $G\leq \widetilde{\bO}(\bL_{\KTcube})$ contained in \cite{database}, there exists a manifold $X$ of $\KTcube$--type, a finite stable subgroup $G'\leq\Bir^s(X)$ and a marking $\eta$ such that $G$ is induced by $G'$ via the marking $\eta$.
\end{thm}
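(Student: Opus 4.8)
The plan is to reduce the statement, via the global Torelli theorem \cite[Theorem 1.3]{markman2011survey} and \autoref{wall-div_criterion}, to the enumeration described in \autoref{appendic lattice computation}; the only genuinely hard part is the computer-aided step.

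\emph{First assertion.} Let $G\leq\Bir^s(X)$ be finite and stable, fix any marking $\eta_0\colon\Homology^2(X,\Z)\xrightarrow{\simeq}\bL_{\KTcube}$, and let $\Gamma\leq\bO(\bL_{\KTcube})$ be the image of $G$ under $g\mapsto\eta_0\circ\rho_X(g)\circ\eta_0^{-1}$; this is an isomorphism onto a finite group since $\rho_X$ is injective for $\textnormal{K3}^{[n]}$--type \cite{beauville1983varietes}. The group $\Gamma$ is symplectic: its coinvariant lattice $C:=(\bL_{\KTcube})_\Gamma$ is negative definite, and by \autoref{wall-div_criterion}(2) it embeds primitively into $\mathbb{L}$ and contains no vector of $\mathcal{W}^{pex}_{\KTcube}$. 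Since $G$ is stable, $\Gamma\leq\bO^\#(\bL_{\KTcube})$, and by \cite[Lemma 2.3]{gov20} (as $\bL_{\KTcube}$ has odd positive signature) also $\Gamma\leq\bO^+(\bL_{\KTcube})$, so $\Gamma\leq\widetilde{\bO}(\bL_{\KTcube})$. Let $\overline\Gamma$ be the saturation of $\Gamma$ in $\bO^\#(\bL_{\KTcube})$: it has the same invariant and coinvariant lattices, so it is again symplectic and, by the same spinor-norm argument, lies in $\widetilde{\bO}(\bL_{\KTcube})$, and it is stably saturated by construction, whence $\overline\Gamma\cong\bO^\#(C)$ by \autoref{lem stab sat}. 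As $\bO^\#(C)$ fixes no nonzero vector of $C$ and $C$ embeds primitively into $\mathbb{L}$, the gluing construction recalled in \autoref{appendic lattice computation} attaches to this embedding a Leech pair $(\mathbb{L}_{G'},G')$ with $\mathbb{L}_{G'}\simeq C$, $G'\cong\bO^\#(C)$ and $G'$ saturated in $\bO(\mathbb{L})$, so $(\mathbb{L}_{G'},G')$ occurs in the H\"ohn--Mason list \cite{HMLeech}. Running steps (1)--(4) of \autoref{appendic lattice computation} for this Leech pair outputs, attached to the $\bO(\bL_{\KTcube})$-orbit of $C$, a representative of the conjugacy class of $\overline\Gamma$ — one of the groups of \cite{database} — and precomposing $\eta_0$ with the conjugating isometry gives a marking $\eta$ via which $G\leq\overline\Gamma$ embeds into that database group.

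\emph{Second assertion.} Conversely, let $G\leq\widetilde{\bO}(\bL_{\KTcube})$ be contained in a group $\mathcal G$ from \cite{database}, and write $C_G\subseteq C_{\mathcal G}$ for the corresponding coinvariant lattices. By construction $C_{\mathcal G}$ is primitive in $\bL_{\KTcube}$, embeds primitively into $\mathbb{L}$, and avoids $\mathcal{W}^{pex}_{\KTcube}$; hence $C_G$ inherits all three properties, $\mathbb{L}_G\simeq C_G$ (both being the coinvariant lattice of $G$ acting on $C_{\mathcal G}\cong\mathbb{L}_{\mathcal G}$), and $G$ embeds into $Co_1$ because $\mathcal G$ does. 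By \autoref{wall-div_criterion}(2) there is a manifold $X$ of $\KTcube$--type and, via the global Torelli theorem, a marking $\eta$ with $G\leq\rho_X(\Bir^s(X))$; setting $G':=\rho_X^{-1}(G)$, which is well defined since $\rho_X$ is injective for $\textnormal{K3}^{[n]}$--type, gives a finite stable subgroup $G'\leq\Bir^s(X)$ inducing $G$ via $\eta$, as wanted.

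\emph{Main obstacle.} The reductions above are routine; the real work is the computer-aided execution of steps (1)--(4) of \autoref{appendic lattice computation}. Step (2) — enumerating representatives of the $\bO(\bL_{\KTcube})$-orbits of primitive embeddings $\mathbb{L}_G\hookrightarrow\bL_{\KTcube}$ through the proof of \cite[Proposition 1.15.1]{nikulin} (compare \cite[Theorem 4.8]{marquand-muller} and \cite{brandhorst-hofmann}) — is combinatorially expensive, ranging over subgroups and gluing isometries of the relevant discriminant forms modulo double cosets of orthogonal groups; since some Leech pairs have coinvariant lattices of large rank, step (3), the search for vectors in $\mathcal{W}^{pex}_{\KTcube}$, is also costly, and one mitigates it by reducing the divisibility conditions via \autoref{easy divisibility} to short-vector enumeration (Fincke--Pohst \cite[\S2.7.3]{cohen}) in the auxiliary lattices $C\cap\gamma\,\bL_{\KTcube}^\vee$. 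Finally \autoref{lem stab sat} together with \cite[Lemma 2.3]{gov20} certify at step (4) that the reconstructed groups lie in $\widetilde{\bO}(\bL_{\KTcube})$; since symplectic actions on $\KTcube$--type manifolds are automatically stable \cite[Lemma 24]{giosympaut} and $\textnormal{Mon}^2(\bL_{\KTcube})=\bO^+(\bL_{\KTcube})$ (\autoref{rem monodromy}), no conjugacy class is missed, and the procedure outputs exactly \cite{database}, closing both implications.
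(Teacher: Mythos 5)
Your proposal is correct and follows essentially the same route as the paper: reduce via the Torelli theorem and \autoref{wall-div_criterion} to the lattice-theoretic setting, pass to the stable saturation so that \autoref{lem stab sat} identifies the group with $\bO^\#(C)$ for a coinvariant lattice $C$ coming from a H\"ohn--Mason Leech pair, and then rely on the computer-aided enumeration of orbits of primitive embeddings $C\hookrightarrow\bL_{\KTcube}$ avoiding $\mathcal{W}^{pex}_{\KTcube}$ (the paper's actual proof simply points to this procedure, which you have spelled out, together with the observation that $\bO^+$- and $\bO$-conjugacy coincide here since $-\id$ is central). No gaps; the only substantive content left implicit in both versions is the OSCAR computation itself.
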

\begin{proof}
    This is an application of \autoref{wall-div_criterion}. We apply the procedure explained above to the list of representatives for the conjugacy classes of subgroups \(G\leq \bO(\mathbb{L})\) classified in \cite{HMLeech}. Note that $\bO(\bL_{\KTcube})/\bO^+(\bL_{\KTcube})$ is generated by the class of $-\id$ which is a central involution. This implies that $\bO^+(\bL_{\KTcube})$-conjugacy classes and $\bO(\bL_{\KTcube})$-conjugacy classes of subgroups of $\widetilde{\bO}(\bL_{\KTcube})$ coincide.
    The computations were performed with the computer algebra system OSCAR \cite{OSCAR}, which is open source. We refer to \cite{brandhorst-hofmann} and the references therein for technical details about the implementation of the procedure described in the proof of \cite[Proposition 1.15.1]{nikulin}.
    We determine exactly 219 $\bO^+(\bL_{\KTcube})$-conjugacy classes of finite saturated symplectic subgroups of $\widetilde{\bO}(\bL_{\KTcube})$: representatives for such conjugacy classes in terms of matrices are available in \cite{database}. We prescribe different notebooks and scripts for reader's convenience.
    Moreover, we collect in \autoref{sec appendix}, \autoref{tab: lovely table 2} information about each entry of our dataset. Refer to the notebooks "Example" and "Nonstable" in \cite{database} for examples of application of the procedure previously described.
\end{proof}

\begin{rmk}\label{lem stable sat to sat}
    Note that even though any group $G\leq \widetilde{\bO}(\bL_{\KTcube})$ in \cite{database} is stably saturated, it might not be saturated in $\bO^+(\bL_{\KTcube})$. If it is not, its saturation can be obtained by adding an extra nonstable isometry $g\in \bO^+(\bL_{\KTcube})$ with $g^2\in G$ (see \cite[Proposition 4.23]{marquand-muller}).
\end{rmk}

As an application, we determine the groups with coinvariant sublattice of rank \(20\) (maximal) and which fix a polarization of the same type of the polarization of a double EPW-cube.
\begin{cor}    
\label{lattices_of_examples}
Let \(X\) be an hyperk\"ahler manifold of \(\KTcube\)--type with $H\in\NS(X)$ of numerical type $(4,2)$, and let $\Bir^s_H(X)$ be the finite group of symplectic birational automorphisms preserving $H$.
If there exists a stable and stably saturated finite subgroup \(G\leq\Bir^s_H(X)\) with \(\rk(\Homology^2(X, \mathbb{Z})_{G})=20\), then $G$ is given in \autoref{table max pol}. The genus of \(\NS(X)\) and the associated transcendental lattices \(\T(X)\) are also displayed.
\end{cor}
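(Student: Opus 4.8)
The plan is to deduce the statement from \autoref{class_symplectic_groups} followed by a finite lattice-theoretic verification. Fix a marking $\eta\colon\Homology^2(X,\mathbb{Z})\xrightarrow{\simeq}\bL_{\KTcube}$ and identify $G$ with its image $\rho_X(G)\leq\bO^+(\bL_{\KTcube})$. Since $G\leq\Bir_H^s(X)$ is finite, stable and stably saturated, $\rho_X(G)$ is a finite symplectic, stably saturated subgroup of $\widetilde{\bO}(\bL_{\KTcube})$; by \autoref{class_symplectic_groups} there is a marking via which $G$ embeds into one of the $219$ groups $G'$ of \cite{database}. Write $C\defeq(\bL_{\KTcube})_{\rho_X(G)}$ for the coinvariant sublattice, so that $C$ is negative definite of rank $20$ by hypothesis; as $\rho_X(G)\leq G'$ we have $C\subseteq(\bL_{\KTcube})_{G'}$, which is negative definite of rank at most $20$. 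Since both coinvariant sublattices are primitive in $\bL_{\KTcube}$, this forces $C=(\bL_{\KTcube})_{G'}$, and as both $G$ and $G'$ are stably saturated, \autoref{lem stab sat} gives $\rho_X(G)\cong\bO^\#(C)\cong G'$, hence $\rho_X(G)=G'$. Thus $G$ must be one of the few entries of \cite{database} whose coinvariant lattice, read off from \autoref{tab: lovely table 2}, has rank $20$; this produces the shortlist.

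I next record the Hodge-theoretic constraints. Put $T\defeq C^\perp\subseteq\bL_{\KTcube}$; since $C$ is primitive, negative definite of rank $20$ and $\bL_{\KTcube}$ has signature $(3,20)$, the lattice $T$ has rank $3$ and is positive definite. As $G$ is symplectic it acts trivially on $\Homology^{2,0}(X)$, so $\T(X)\subseteq T$ and $C\subseteq\NS(X)$; moreover $\eta(H)\in T$ because $G$ preserves $H$, and $\eta(H)$ is orthogonal to $C$, so $\rk\NS(X)\geq 21$. As the Picard rank of a manifold of $\KTcube$-type is at most $h^{1,1}=21$, we obtain $\rk\NS(X)=21$ and $\rk\T(X)=2$, with $\NS(X)$ the saturation of $C\oplus\mathbb{Z}\eta(H)$ in $\bL_{\KTcube}$ and $\T(X)=\NS(X)^\perp$. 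Primitivity of $\T(X)$ in $\bL_{\KTcube}$ implies primitivity in $T$, hence in $\eta(H)^\perp_T$; as the latter has rank $2=\rk\T(X)$, we conclude $\T(X)=\eta(H)^\perp_T$.

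It remains to go through the shortlist. For each candidate the lattice $C$, hence $T$, is determined up to isometry, and the hypotheses force $T$ to contain a primitive vector of type $(4,2)$ with respect to $\bL_{\KTcube}$. By \autoref{easy divisibility} this is equivalent to $T\cap 2\bL_{\KTcube}^\vee$ containing a primitive vector of square $4$, which is checked by a short-vector enumeration (Fincke-Pohst) in the definite lattice $T$. Discarding the groups for which no such vector exists leaves precisely the groups displayed in \autoref{table max pol}. For each of these, $\T(X)=\eta(H)^\perp_T$ ranges over the finitely many orbits of primitive type-$(4,2)$ vectors of $T$ under the stabilizer of $C$ in $\bO(\bL_{\KTcube})$ (a finite list, since $T$ is definite), which yields the transcendental lattices of the table. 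Finally $\NS(X)=\T(X)^\perp_{\bL_{\KTcube}}$ has signature $(1,20)$, and its discriminant form --- hence its genus --- is computed from Nikulin's description \cite{nikulin} of the primitive extension $\T(X)\oplus\NS(X)\subseteq\bL_{\KTcube}$, using $D_{\bL_{\KTcube}}\cong\mathbb{Z}/4\mathbb{Z}$; these genera are recorded in \autoref{table max pol}.

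The main obstacle is ensuring completeness of the bookkeeping in this last step: one must enumerate all admissible polarizations $\eta(H)\in T$ and all admissible gluings in the primitive extension $\T(X)\oplus\NS(X)\subseteq\bL_{\KTcube}$, so that the list of pairs $\bigl(\T(X),\ \text{genus of }\NS(X)\bigr)$ is exhaustive; the short-vector searches in the rank-$20$ lattices $T$ are the only other computationally nontrivial ingredient. As in \autoref{class_symplectic_groups}, these verifications are carried out with OSCAR \cite{OSCAR}.
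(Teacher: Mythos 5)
Your proposal is correct and follows essentially the same route as the paper's proof: restrict to the classified groups with rank-$20$ coinvariant sublattice, use \autoref{easy divisibility} to test for an invariant vector of type $(4,2)$ in the rank-$3$ positive definite invariant lattice, and read off $\T(X)$ as the orthogonal complement of $H$ there. You merely spell out more explicitly than the paper why $G$ must coincide with (rather than embed into) one of the listed saturated groups and how the genus of $\NS(X)$ is recovered, both of which are consistent with the paper's argument.
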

\begin{proof}
    In this case \(\NS(X)\) has signature \((1,20)\) and \(T(X)\) has signature \((2,0)\). For any group \(G\) as in \autoref{class_symplectic_groups} with \(\rk(\Homology^2(X, \mathbb{Z})_{G})=20\) we use \autoref{easy divisibility} and decide whether $\Homology^2(X, \mathbb{Z})^{G}\subseteq\Homology^2(X, \mathbb{Z})$ has a $G$-invariant vector $H$ of type \((4,2)\) (divisibility in $\Homology^2(X, \ZZ)$). 
    For each orbit of such vectors, the corresponding transcendental lattice is given by $H^\perp \subset \Homology^2(X, \mathbb{Z})^{G}$. For each entry in \autoref{table max pol}, we refer the Id of the corresponding conjugacy class of representations of $G$ on \(\bL_{\KTcube}\) as given in \autoref{tab: lovely table 2}.
\end{proof}
{
\renewcommand\arraystretch{1.2}
\setlength{\tabcolsep}{2pt}
\begin{table}[ht]

\caption{Stable and stably saturated $(4,2)$-polarized actions with maximal coinvariant sublattice}\label{table max pol}\vspace*{0.3cm}
\centering
    \small{\begin{tabular}{ccccc|ccccc}

 Id&$G$& Regular & $\textnormal{T}(X)$&$g(\NS(X))$&Id& $G$ & Regular & $\textnormal{T}(X)$&$g(\NS(X))$ \\
 \hline
 \cellcolor{lightgray!40!white}102d&\cellcolor{lightgray!40!white}$L_3(4)$&\cellcolor{lightgray!40!white} No&\cellcolor{lightgray!40!white}$\begin{pmatrix}12&0\\0&28\end{pmatrix}$&\cellcolor{lightgray!40!white} $\II_{(1,21)}4^{-1}_53^{-1}7^{-1}$&\cellcolor{lightgray!40!white}124&\cellcolor{lightgray!40!white}$[384, 18134]$&\cellcolor{lightgray!40!white}No&\cellcolor{lightgray!40!white}$\begin{pmatrix}2&0\\0&16\end{pmatrix}$&\cellcolor{lightgray!40!white} $\II_{(1,21)}2^{1}_74_3^{-1}16_3^{-1}$\\

  106b&$C_2^4\rtimes \mathcal{A}_6$& No& $\begin{pmatrix}4&0\\0&24\end{pmatrix}$&$\II_{(1,21)}2^28^{-1}_33^1$&131 &$[192, 1494]$&No&$\begin{pmatrix}8&0\\0&8\end{pmatrix}$&$\II_{(1,21)}4^{-1}_58^{-2}_4$\\
 
  \cellcolor{lightgray!40!white}108a&\cellcolor{lightgray!40!white}$\mathcal{A}_7$&\cellcolor{lightgray!40!white}Yes& \cellcolor{lightgray!40!white} $\begin{pmatrix}6&0\\0&70\end{pmatrix}$&\cellcolor{lightgray!40!white}$\II_{(1,21)}4^1_13^15^17^1$&\cellcolor{lightgray!40!white}133b&\cellcolor{lightgray!40!white}$C_2\times M_9$&\cellcolor{lightgray!40!white}No&\cellcolor{lightgray!40!white}$\begin{pmatrix}4&-2\\-2&10\end{pmatrix}$&\cellcolor{lightgray!40!white}$\II_{(1,21)}2^{-3}_44^{-1}_59^{-1}$\\
  
  119a&$M_{10}$&Yes& $\begin{pmatrix}4&0\\0&30\end{pmatrix}$&$\II_{(1,21)}2^{-1}_34^2_03^{-1}5^1$&133b&$C_2\times M_9$&No&$\begin{pmatrix}4&0\\0&36\end{pmatrix}$&$\II_{(1,21)}2^{-3}_44^{-1}_59^{-1}$\\

 \cellcolor{lightgray!40!white}119e&\cellcolor{lightgray!40!white} $M_{10}$&\cellcolor{lightgray!40!white}No&\cellcolor{lightgray!40!white}$\begin{pmatrix}4&0\\0&30\end{pmatrix}$&\cellcolor{lightgray!40!white}$\II_{(1,21)}2^3_73^{-1}5^1$&\cellcolor{lightgray!40!white}137a &\cellcolor{lightgray!40!white}$\mathcal{S}_5$&\cellcolor{lightgray!40!white}No&\cellcolor{lightgray!40!white}$\begin{pmatrix}6&0\\0&10\end{pmatrix}$&\cellcolor{lightgray!40!white}$\II_{(1,21)}2^{-2}_24^1_13^15^{-1}$\\

  120a&$L_2(11)$&Yes& $\begin{pmatrix}22&0\\0&22\end{pmatrix}$&$\II_{(1,21)}4^1_111^2$&149b&$C_2\times F_5$&No&$\begin{pmatrix}10&0\\0&10\end{pmatrix}$&$\II_{(1,21)}2^{-2}_24^1_75^2$\\
 \hline
\end{tabular}}
\end{table}}
  We observe that \autoref{class_symplectic_groups} gives a complete classification of saturated groups of symplectic regular automorphisms for hyperk\"ahler manifolds of \(\KTcube\)--type, since regular symplectic automorphisms are stable \cite[Lemma 24]{giosympaut}. However, birational symplectic automorphisms might not always be stable, as shown in the following example (see also \autoref{lem stable sat to sat}).
Because of this limitation, the use of Leech pairs is not enough for the classification of finite subgroups of symplectic birational automorphisms.

\begin{ex}
    There exist nonstable groups of symplectic birational automorphisms of $\KTcube$--type hyperk\"ahler manifolds.
    For instance, consider the pair $(\mathbb{L}_G, G)$ no. 77 from H\"ohn--Mason database \cite{HMLeech}: the group $G$ is actually isomorphic to the finite simple group $L_2(7)$. 
    The associated coinvariant sublattice $\mathbb{L}_G$ has rank 19 and $\bL_{\KTcube}$ admits exactly two $\bO(\bL_{\KTcube})$-orbits of primitive sublattices $C\simeq \mathbb{L}_G$ with \(C\cap\mathcal{W}^{pex}_{\KTcube}=\emptyset\) (see \autoref{tab: lovely table 2}, entries 77a and 77b). 
    One of such $C$'s is so that $C\cap \mathcal{W}_{\KTcube}=\emptyset$ meaning that, according to \autoref{wall-div_criterion}, $\bO^\#(C)$ can be realized as a finite group of symplectic automorphisms on a manifold of $\KTcube$--type. Let us fix such primitive sublattice $C$, and let again $G\leq \widetilde{\bO}(\bL_{\KTcube})$ be defined as the identity on $F := C^\perp$ and restricts to $\bO^\#(C)$ on $C$.
    The invariant sublattice $F =\bL_{\KTcube}^G$ is isometric to 
    \[\scriptstyle{\begin{pmatrix}
        2&1&0&0\\1&4&0&0\\0&0&-4&0\\0&0&0&28
    \end{pmatrix}}.\]
    Following the ideas of the extension approach of \cite[\S5]{brandhorst-hashimoto}, later extended by \cite[Theorem 3.25]{brandhorst-hofmann}, adapted by \cite{comparin-demelle,wawak2022very} for hyperk\"ahler manifolds of type $\KTsquare$ and reformulated for symplectic actions in \cite[\S5]{marquand-muller}, one can find that $F$ admits an involution with negative definite coinvariant sublattice which can be extended to an isometry $g\in \bO^+(\bL_{\KTcube})$ whose square lies in $G$. 
    Such an isometry is symplectic and nonstable.
    Hence, the group $G' \defeq  \langle G, g\rangle$ is symplectic and nonstable.
    One has moreover
    \[\bL_{\KTcube}^{G'}\simeq \scriptstyle{\begin{pmatrix}
        2&0&0\\0&6&2\\0&2&10
    \end{pmatrix}}\]
    and the associated coinvariant sublattice lies in the genus $\II_{(0, 20)}4^2_27^1$\footnote{see the notebook "Nonstable" in \cite{database} for more computational details}. One can realize such an example explicitly as a birational model of the Hilbert scheme of three points on the K3 surface of degree 2 described in \cite[\S 6.6, 74a]{brandhorst-hashimoto}. See \cite[Section 6.4]{kapustka2022epw} for a thorough discussion on the geometry of Hilbert cubes of K3 surfaces of degree 2.
\end{ex}

\section{The automorphism group of a double EPW-cube}\label{sec describe aut}
In this section we describe the automorphism group of a double EPW-cube associatred to a  Lagrangian subspace \([A]\not\in(\Sigma\cup\Gamma)\) which satisfies some further generality assumption. The strategy followed is analogous to that of the case of double EPW-sextics treated in \cite[Proposition A.2]{debarre2022gushel}. \bigskip

Recall that we have an isomorphism \(\Aut(\Gr)\cong \PGL(V_6)\times \langle \delta\rangle\), where \(\delta\) is the involution that sends a \(3\)-space to its dual \(3\)-space with respect to the symplectic form. 
The latter induces an involution that we will call again \(\delta\in\Aut(\LG)\). 
This gives a map 
\[p:\mathcal{M}_{sex}\dashrightarrow\mathcal{M}_{cub}\] 
which is a double cover ramified on the locus of double EPW-sextics associated with a self-dual Lagrangian space (see \cite{o2008dual,o2015periods}), sending the isomorphism class of \(\dsex{1}\) to the isomorphism class of \(\dcub{2}\). 
According to \cite[Corollary 6.1]{kapustka2022epw}, there is a commutative diagram 
    \begin{equation}\label{moduli square}
\begin{tikzcd}
\mathcal{M}_{sex} \arrow[r, dashed,"p"] \arrow["\mathcal{P}_{sex}"',d] & \mathcal{M}_{cub} \arrow[d,"\mathcal{P}_{cub}"] \\
\Omega_{sex} \arrow[r, dashed,"i"]                        & \Omega_{cub}               
\end{tikzcd}
    \end{equation}
    where \(i:\Omega_{sex}\rightarrow \Omega_{cub}\) is the quotient map given by the inclusion \(\widetilde{\bO}(\Lambda)\leq \bO^+(\Lambda)\), where we recall that $\Lambda := \bU^{\oplus2}\oplus \bE_8^{\oplus2}\oplus [-2]^{\oplus2}$.
\begin{lem}\label{two_to_one}
     Let \([A_1],[A_2]\in\LG\setminus(\Sigma\cup\Gamma)\). 
     Then \(\vcub{A_1}{2}\) and \(\vcub{A_2}{2}\) are $\PGL(\bigwedge^3V_6)$-isomorphic if and only if \([A_1]\) and \([A_2]\) are in the same \(\Aut(\Gr)\)-orbit.
\end{lem}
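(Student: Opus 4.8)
The plan is to prove the two implications by separate arguments. The backward direction — that $\Aut(\Gr)$-conjugacy of the Lagrangians implies a $\PGL(\bigwedge^3V_6)$-isomorphism — is a formal consequence of the equivariance of the EPW-cube construction. The forward direction is the substantial one; I would deduce it from the injectivity of the period map $\mathcal{P}_{cub}$, which in turn I would extract from the known injectivity of $\mathcal{P}_{sex}$ by means of the commutative square \autoref{moduli square}.

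First, the backward implication. Suppose $g\in\Aut(\Gr)$ satisfies $g\cdot[A_1]=[A_2]$, and write $\bar g\in\PGL(\bigwedge^3 V_6)$ for the induced projectivity, coming from the action of $\Aut(\Gr)$ on $\bigwedge^3 V_6=H^0(\Gr,\mathcal{O}_{\Gr}(1))^\vee$. The crucial observation is that $\bar g$ rescales the symplectic form on $\bigwedge^3 V_6$ and satisfies $T_{g(U)}=\bar g(T_U)$ for every $[U]\in\Gr$: for $g=[\phi]\in\PGL(V_6)$ this is the identity $\bigwedge^2\phi(U)\wedge V_6=(\bigwedge^3\phi)(\bigwedge^2 U\wedge V_6)$, and for the duality involution $\delta$ it holds by construction. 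Hence $\bar g$ sends $Z_{A_1}[k]$ isomorphically, with its scheme structure, onto $Z_{A_2}[k]$ for every $k$, and in particular identifies the pairs $(Z_{A_1}[2],Z_{A_1}[3])$ and $(Z_{A_2}[2],Z_{A_2}[3])$. Since by \autoref{epws} the double cover $\pi_A^Z$ is canonically attached to these data, $\bar g$ lifts to an isomorphism $\vcub{A_1}{2}\xrightarrow{\ \sim\ }\vcub{A_2}{2}$ which intertwines the two covering involutions and pulls $H_{A_2}$ back to $H_{A_1}$; lying over $\bar g$, it is a $\PGL(\bigwedge^3 V_6)$-isomorphism.

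Now the forward implication. Let $\varphi\colon\vcub{A_1}{2}\xrightarrow{\ \sim\ }\vcub{A_2}{2}$ be a $\PGL(\bigwedge^3 V_6)$-isomorphism; it descends to a projectivity $\psi\in\PGL(\bigwedge^3 V_6)$ with $\psi(Z_{A_1}[2])=Z_{A_2}[2]$, and since $\psi$ is linear, $\varphi$ is an isomorphism of $H$-polarized hyperkähler manifolds of $\KTcube$--type, whence $\mathcal{P}_{cub}([A_1])=\mathcal{P}_{cub}([A_2])$ in $\Omega_{cub}$. It therefore suffices to show that $\mathcal{P}_{cub}$ is injective. Here I would use \autoref{moduli square}: the period map $\mathcal{P}_{sex}$ of double EPW-sextics is injective (an open immersion, by \cite{o2016moduli}), and $i\colon\Omega_{sex}\to\Omega_{cub}$ is the quotient by $\bO^+(\Lambda)/\widetilde{\bO}(\Lambda)\cong\mathbb{Z}/2\mathbb{Z}$, which under the square corresponds to the $\langle\delta\rangle$-action on $\mathcal{M}_{sex}$ that $p$ divides out. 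So if $x_1,x_2\in\mathcal{M}_{cub}$ have the same period, lifting them along $p$ to $\tilde x_1,\tilde x_2\in\mathcal{M}_{sex}$ and applying injectivity of $\mathcal{P}_{sex}$ forces $\tilde x_1$ and $\tilde x_2$ into a single $\langle\delta\rangle$-orbit, hence $x_1=p(\tilde x_1)=p(\tilde x_2)=x_2$. As the points of $\mathcal{M}_{cub}=(\LG\setminus(\Sigma\cup\Gamma))//\Aut(\Gr)$ are the $\Aut(\Gr)$-orbits of Lagrangians, this says exactly that $[A_1]$ and $[A_2]$ are $\Aut(\Gr)$-conjugate.

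The main obstacle is the injectivity of $\mathcal{P}_{cub}$. The reduction just sketched transfers it to the known injectivity of $\mathcal{P}_{sex}$, but it requires care on two fronts: the full $\delta$-equivariance of the square (that $\mathcal{P}_{sex}$ is $\delta$-equivariant and that $p$ is genuinely the $\langle\delta\rangle$-quotient, for which one leans on \cite{kapustka2022epw}), and the behaviour over the bad loci $\Sigma$, $\Delta$, $\Gamma$, so that the lifts along $p$ actually exist — this may force one to work with O'Grady's extended moduli of (possibly singular) double EPW-sextics from \cite{o2016moduli}. A more geometric alternative bypasses periods: one argues that the projectivity $\psi$ above must preserve $\Gr(3,V_6)$ inside $\mathbb{P}(\bigwedge^3 V_6)$ — since $Z_A[2]$ lies on no quadrics other than the Plücker ones, $\psi$ fixes the ideal cutting out $\Gr$ — so that $g\defeq\psi|_{\Gr}\in\Aut(\Gr)$, and then invokes that the degeneracy section defining $Z_A[2]$ determines the Lagrangian $[A]$ up to scalar, in analogy with the EPW-sextic case of \cite{o2008dual, debarre2022gushel}; I expect this route to be of comparable difficulty, the essential point in either case being the rigidity of the EPW-cube inside $\mathbb{P}(\bigwedge^3 V_6)$.
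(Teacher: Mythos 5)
Your backward implication is fine and matches the paper's (which simply says "one direction is clear"); the equivariance computation $T_{\phi(U)}=(\bigwedge^3\phi)(T_U)$ together with the naturality of the double cover is exactly the right justification. The problem is in the forward direction. Your main route reduces injectivity of $\mathcal{P}_{cub}$ to injectivity of $\mathcal{P}_{sex}$ by lifting along $p$, but $p$ is only a rational map: its source is $(\LG\setminus(\Sigma\cup\Delta))//\PGL(V_6)$ while its target is $(\LG\setminus(\Sigma\cup\Gamma))//\Aut(\Gr)$, and since $\Delta$ and $\Gamma$ are distinct divisors, a Lagrangian $[A]\in\LG\setminus(\Sigma\cup\Gamma)$ lying in $\Delta$ defines a point of $\mathcal{M}_{cub}$ with no lift to $\mathcal{M}_{sex}$ (the corresponding double EPW-sextic is singular). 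The lemma is stated for all $[A_i]\notin\Sigma\cup\Gamma$, so your argument does not cover these points; you flag this yourself and defer to O'Grady's extended moduli, but that step is not carried out, and the $\delta$-equivariance of $\mathcal{P}_{sex}$ (needed to match the deck transformation of $i$ with the $\delta$-action on $\mathcal{M}_{sex}$) is likewise asserted rather than established. As it stands the forward implication is therefore not proved.

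The paper's own argument is much shorter and avoids the period maps entirely: from the linear equivalence of \(\vcub{A_1}{2}\) and \(\vcub{A_2}{2}\) one obtains, by naturality of the covers, a polarized isomorphism \(\vdcub{A_1}{2}\cong\vdcub{A_2}{2}\), and then one invokes the proof of \cite[Proposition 5.1]{iliev2019epw}, which shows directly that such an isomorphism forces $[A_1]$ and $[A_2]$ into the same $\Aut(\Gr)$-orbit. This is essentially the "geometric alternative" you sketch in your final paragraph: the projectivity preserves $\Gr$ because the EPW-cube lies on no quadrics other than the Pl\"ucker ones (cf. \cite[Lemma 5.2]{iliev2019epw}, used again in \autoref{automorphi_are_linear}), and the degeneracy data then determine $[A]$. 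Carrying out that sketch, or simply citing the result of Iliev--Kapustka--Kapustka--Ranestad, closes the proof; the period-map route, by contrast, needs genuine additional work over $\Delta$.
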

\begin{proof}
    One direction is clear: if $[A_1]$ and $[A_2]$ are in the same orbit, since \(\Aut(\Gr)\leq \PGL(\bigwedge^3V_6)\), we obtain that the associated EPW-cubes are linearly equivalent.
    
    Now suppose that \(\vcub{A_1}{2}\) and \(\vcub{A_2}{2}\) are linearly equivalent via a certain \(f\in\PGL(\bigwedge^3V_6)\). Since the double covers \(\vdcub{A_i}{2}\to \vcub{A_i}{2}\) are natural, we have that \(\vdcub{A_1}{2}\) and \( \vdcub{A_2}{2}\) are isomorphic as polarized manifolds. By the proof of \cite[Proposition 5.1]{iliev2019epw}, the latter actually implies that there exists \( g\in\Aut(\Gr)\) such that \(g([A_1])=[A_2]\). 
    \end{proof}
    
\begin{rmk}
    This result translates at the level of moduli spaces. In fact, from the description \(\Aut(\Gr)\cong \PGL(V_6)\times\langle\delta\rangle\), the previous statement can be seen as a consequence of the fact that the map \(p\) in \autoref{moduli square} is just the quotient map via \(\delta\).
\end{rmk}

For any subgroup $G\leq\Aut(\Gr))$ and for any Lagrangian space $[A]\in \LG$, let us denote by $G_A$ the subgroup of automorphisms fixing $[A]$.
Similarly to the case of EPW-sextics, we have:
\begin{prop}\label{automorphi_are_linear}
   For $[A]\in \LG\setminus(\Sigma\cup\Gamma)$, we have an isomorphism  \[\textnormal{Stab}_{\PGL(\bigwedge^3V_6)}(\cub{2})\cong \Aut(\Gr)_A
.\]
\end{prop}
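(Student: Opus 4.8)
The plan is to prove the two inclusions $\textnormal{Stab}_{\PGL(\bigwedge^3V_6)}(\cub{2})\supseteq \Aut(\Gr)_A$ and $\textnormal{Stab}_{\PGL(\bigwedge^3V_6)}(\cub{2})\subseteq \Aut(\Gr)_A$ separately, the first being essentially formal and the second being the substantial one. For the inclusion $\supseteq$: if $g\in\Aut(\Gr)_A$, then $g$ preserves $[A]$ and acts on $\bigwedge^3 V_6$ (either through $\PGL(V_6)$ or composed with the duality $\delta$), hence permutes the vector bundle data $T_U$ (up to the identification induced by $\delta$, which swaps $\textnormal{Gr}(3,V_6)$ with its dual and correspondingly $T_U$ with $T_{U^\perp}$); since $Z_A[2]$ is defined intrinsically from $A$ and this bundle data as a degeneracy locus, $g$ maps $Z_A[2]$ isomorphically to itself inside $\PP(\bigwedge^3 V_6)$, so $g$ lies in $\textnormal{Stab}_{\PGL(\bigwedge^3V_6)}(\cub{2})$. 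One should be slightly careful to record that $\delta$ does act on $\PP(\bigwedge^3 V_6)$ as a projective linear transformation, which is what allows it to land in $\PGL(\bigwedge^3 V_6)$ at all.

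For the inclusion $\subseteq$, which is the heart of the matter, I would follow the strategy of \cite[Proposition A.2]{debarre2022gushel} as the authors indicate. Let $f\in\PGL(\bigwedge^3 V_6)$ with $f(Z_A[2])=Z_A[2]$. By \autoref{epws}(2), the natural double cover $\pi_A^Z\colon\dcub{2}\to\cub{2}$ is canonically associated to $Z_A[2]$ (it is branched over $Z_A[3]$, also intrinsic), so $f$ lifts to a polarized automorphism $\tilde f$ of the polarized hyperkähler manifold $(\dcub{2}, H_A)$. Now I would invoke the argument in the proof of \cite[Proposition 5.1]{iliev2019epw}, exactly as it was used in the proof of \autoref{two_to_one}: a polarized isomorphism between double EPW-cubes (here an automorphism) is induced by an element of $\Aut(\Gr)$. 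Concretely, this means there exists $g\in\Aut(\Gr)$ with $g([A])=[A]$, i.e. $g\in\Aut(\Gr)_A$, such that $g$ and $f$ induce the same transformation on $\cub{2}$, hence the same element of $\PGL(\bigwedge^3 V_6)$ after checking that the action on the EPW-cube determines the ambient projective transformation. This last point uses that $\cub{2}$ is nondegenerate in $\PP(\bigwedge^3 V_6)$ — i.e. spans it — so that a linear automorphism of $\PP(\bigwedge^3 V_6)$ preserving $\cub{2}$ and acting trivially on it must be the identity; this is where the generality assumption on $[A]$ alluded to in the section's opening sentence presumably enters, guaranteeing $Z_A[2]$ is not contained in a hyperplane.

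Assembling these, the map $\Aut(\Gr)_A\to\textnormal{Stab}_{\PGL(\bigwedge^3V_6)}(\cub{2})$ is the restriction of the inclusion $\Aut(\Gr)\hookrightarrow\PGL(\bigwedge^3 V_6)$, hence injective, and the $\subseteq$ direction shows it is surjective, giving the claimed isomorphism. I expect the main obstacle to be the careful bookkeeping in the $\subseteq$ direction: precisely, verifying that the lift $\tilde f$ respects the polarization $H_A$ and that the structure of $Z_A[2]$ (including its double cover) is rigid enough that \cite[Proposition 5.1]{iliev2019epw} applies to produce $g\in\Aut(\Gr)$, together with the nondegeneracy input ensuring that $g$ and $f$ agree as elements of $\PGL(\bigwedge^3 V_6)$ rather than merely on $\cub{2}$. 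Once the analogy with the EPW-sextic case is set up correctly, the remaining steps are routine, but the interface with \cite{iliev2019epw} is the delicate part and is where the unstated ``further generality assumption'' on $[A]$ must be pinned down.
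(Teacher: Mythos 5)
Your easy inclusion is the same as the paper's. For the hard inclusion the paper takes a shorter route: given $f\in\textnormal{Stab}_{\PGL(\bigwedge^3V_6)}(\cub{2})$, it invokes \cite[Lemma 5.2]{iliev2019epw} directly, which says that a projective linear transformation preserving $\cub{2}$ already restricts to an automorphism of $\Gr$; once $f\in\Aut(\Gr)$ is known, $f$ carries $\cub{2}$ to the EPW-cube of $f(A)$, and \autoref{two_to_one} (really, the recoverability of $[A]$ from its EPW-cube up to the $\Aut(\Gr)$-action) forces $f([A])=[A]$. You instead lift $f$ to a polarized automorphism of $\dcub{2}$ and appeal to \cite[Proposition 5.1]{iliev2019epw} to produce some $g\in\Aut(\Gr)_A$ agreeing with $f$ on $\cub{2}$. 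The soft spot is exactly the one you flag: used as a black box with $A_1=A_2=A$, \autoref{two_to_one} only asserts the existence of \emph{some} $g\in\Aut(\Gr)$ with $g([A])=[A]$ (vacuously satisfied by the identity) and says nothing relating $g$ to $f$. To close this you must open up the proof of \cite[Proposition 5.1]{iliev2019epw} and check that the automorphism it produces is the one induced on $\mathbb{P}(\Homology^0(\cub{2},\mathcal{O}_{\cub{2}}(1))^\vee)\cong\mathbb{P}(\bigwedge^3V_6)$ by the polarized isomorphism, i.e.\ $f$ itself; at that point the detour through the double cover is superfluous and you have essentially re-derived Lemma 5.2. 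Two smaller remarks: your nondegeneracy observation (that $\cub{2}$ spans $\mathbb{P}(\bigwedge^3V_6)$) is a reasonable thing to want, but it is not where any extra hypothesis enters --- \autoref{automorphi_are_linear} holds for every $[A]\notin\Sigma\cup\Gamma$, and the ``further generality assumption'' announced at the start of the section is the condition that $A$ and $\delta(A)$ are not $\PGL(V_6)$-equivalent, which is only used later in \autoref{automorphisms_double_cover}.
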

\begin{proof}
Clearly, any automorphism $g\in\Aut(\Gr)\leq \PGL(\bigwedge^3V_6)$ fixing $[A]\in \LG\setminus(\Sigma\cup\Gamma)$ also preserves $\cub{2}$. Suppose now \(g\in\textnormal{Stab}_{\PGL(\bigwedge^3V_6)}(\cub{2})\), then \(g\) induces an automorphism of $\Gr$ according to \cite[Lemma 5.2]{iliev2019epw}. 
We conclude using \autoref{two_to_one}.
\end{proof}

In the following we adapt the proof of \cite[Proposition A.2]{debarre2022gushel} to the case of EPW-cubes.
\begin{prop}\label{automorphisms_double_cover}
Let \([A]\in\LG\setminus(\Sigma\cup\Gamma)\) and consider \(\pi\colon\dcub{2}\rightarrow\cub{2}\) the associated double EPW-cube with polarization \(H=\pi^*\mathcal{O}_{\cub{2}}(1)\). Let us denote by $\iota$ the covering involution, and by \(\Aut_H(\dcub{2})\) is the group of automorphisms preserving \(H\). If \(A\) and \(\delta(A)\) are not in the same \(\PGL(V_6)\)-orbit, then there is an isomorphism

    \[\Aut_H(\dcub{2})\cong \PGL(V_6)_A\times \langle \iota\rangle\]
where the group \(\PGL(V_6)_A\) corresponds to the subgroup \(\Aut_H^s(\dcub{2})\) of symplectic automorphisms preserving \(H\).
\end{prop}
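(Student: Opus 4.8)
The plan is to mimic the double EPW-sextic argument from \cite[Proposition A.2]{debarre2022gushel}. Every automorphism of $\dcub{2}$ preserving $H$ either commutes with the covering involution $\iota$ and descends to $\cub{2}$, or not; the key point is that, since $H = \pi^*\mathcal{O}_{\cub{2}}(1)$ and $\iota$ is the Galois involution of the double cover, $\iota$ is central in $\Aut_H(\dcub{2})$, because $\Aut_H(\dcub{2})$ acts on the $2$-dimensional space $\Homology^0(\dcub{2}, H)$ (whose image realizes $\cub{2}$ as the target of the $2:1$ map), and $\iota$ is the unique nontrivial element acting trivially there. Hence $\Aut_H(\dcub{2}) = \langle\iota\rangle \times \big(\Aut_H(\dcub{2})/\langle\iota\rangle\big)$ once we check the extension splits, and the quotient embeds into $\textnormal{Stab}_{\PGL(\bigwedge^3 V_6)}(\cub{2})$ via the descent map. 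By \autoref{automorphi_are_linear}, this stabilizer is $\Aut(\Gr)_A$. So the crux is to identify which elements of $\Aut(\Gr)_A = \PGL(V_6)_A \rtimes \langle\delta\rangle_A$ actually lift to automorphisms of the double cover preserving $H$, and to pin down which of these are symplectic.

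First I would recall from \autoref{epws}(2) that the double cover $\pi_A^Z$ is \emph{canonical}, hence functorial: any $g\in\Aut(\Gr)_A$ preserving $\cub{2}$ canonically lifts to an automorphism $\widetilde g$ of $\dcub{2}$ commuting with $\iota$, well-defined up to composition with $\iota$, and preserving $H = \pi^*\mathcal{O}(1)$. This produces an exact sequence $1 \to \langle\iota\rangle \to \Aut_H(\dcub{2}) \to \Aut(\Gr)_A \to 1$; conversely the descent of any $H$-preserving automorphism gives a section-less splitting candidate, and I would argue the sequence is split by exhibiting, for each generator, a canonical lift of order equal to that of $g$ (the canonical lift construction respects composition up to $\iota$, and since $\iota$ is central one checks obstruction classes vanish — here the hypothesis that $A$ and $\delta(A)$ are in \emph{distinct} $\PGL(V_6)$-orbits enters, because it forces $\delta\notin\Aut(\Gr)_A$, so $\Aut(\Gr)_A = \PGL(V_6)_A$ and the potentially problematic "dual" component disappears). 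Thus $\Aut_H(\dcub{2}) \cong \PGL(V_6)_A \times \langle\iota\rangle$.

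The remaining point is the symplectic/nonsymplectic dichotomy. The covering involution $\iota$ acts on the holomorphic $2$-form $\sigma_X$ by $-1$: this is the standard fact that the antiinvariant part of $\Homology^2$ under the Galois involution of such a double cover carries the transcendental Hodge structure, equivalently $\iota^*\sigma_X = -\sigma_X$ (as for double EPW-sextics, cf.\ \cite{o2010double}); hence $\iota$ is nonsymplectic. On the other hand, for $g\in\PGL(V_6)_A$ the canonical lift $\widetilde g$ acts on the $1$-dimensional $\Homology^{2,0}$ by some scalar; composing with $\iota$ changes that scalar by $-1$, so exactly one of $\widetilde g, \iota\widetilde g$ is symplectic, and choosing the symplectic representatives gives a complementary subgroup $\cong \PGL(V_6)_A$ consisting of symplectic automorphisms — this is $\Aut_H^s(\dcub{2})$. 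To see that the lift can be chosen symplectic (rather than acting by a root of unity $\neq \pm 1$), I would invoke that $\PGL(V_6)_A$ acts on $V_6$ through $\SL$-representations up to scalar and that the $2$-form is built $\PGL(V_6)$-equivariantly from the Lagrangian data, so the induced action on $\Homology^{2,0}(\dcub{2})$ is through $\pm 1$; alternatively, since $\Aut_H^s$ is the kernel of the character $\Aut_H(\dcub{2})\to \mathbb{C}^\times$, $g\mapsto (g^*\sigma_X)/\sigma_X$, and this character has image $\{\pm 1\}$ (its kernel has index dividing $2$ and $\iota$ lies outside it), the group $\Aut_H^s(\dcub{2})$ has index $2$ and is a complement to $\langle\iota\rangle$, hence isomorphic to $\PGL(V_6)_A$.

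\emph{Main obstacle.} I expect the delicate step to be the splitting of the extension $1\to\langle\iota\rangle\to\Aut_H(\dcub{2})\to\Aut(\Gr)_A\to 1$ together with the precise role of the hypothesis on $\delta(A)$: one must rule out that some element of $\Aut(\Gr)_A$ lifts only to an automorphism of order twice its order (a genuine nonsplit $\mathbb{Z}/2$-extension phenomenon), and the cleanest route is to show the canonical lift coming from functoriality of the double cover is genuinely a group homomorphism on $\PGL(V_6)_A$ — this is where the argument of \cite[Proposition A.2]{debarre2022gushel} is adapted, using that the $\PGL(V_6)$-action is already linearized on $\bigwedge^3 V_6$ so there is no Brauer obstruction, and that excluding $\delta$ (thanks to $[A]\not\sim_{\PGL} [\delta(A)]$) removes the only component of $\Aut(\Gr)$ for which a nontrivial lifting obstruction could occur.
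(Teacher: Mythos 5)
Your skeleton is the same as the paper's: use \autoref{automorphi_are_linear} together with the hypothesis on $\delta(A)$ (whose only role is to force $\Aut(\Gr)_A=\PGL(V_6)_A$) to obtain the central extension $1\to\langle\iota\rangle\to\Aut_H(\dcub{2})\to\PGL(V_6)_A\to1$, then split it and sort out the symplectic part. The gap sits exactly at the step you yourself flag as the main obstacle, and the mechanism you offer to close it is incorrect. You assert that ``the $\PGL(V_6)$-action is already linearized on $\bigwedge^3V_6$, so there is no Brauer obstruction.'' It is not linearized: only $\SL(V_6)$ acts linearly on $\bigwedge^3V_6$, and the obstruction to descending that linear action to $\PGL(V_6)_A$ is precisely the central extension $1\to\mu_6\to\widetilde G\to\PGL(V_6)_A\to1$ with $\widetilde G\leq\SL(V_6)$. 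The substance of the paper's proof is to kill this obstruction explicitly: the action of $\widetilde G$ on $\bigwedge^3V_6$ factors through $\widetilde G/\mu_3$, which embeds into $\GL(A)$ and acts on the $\mathcal{O}_{\cub{2}}$-algebra $\mathcal{O}_{\cub{2}}\oplus\mathcal{R}_2(-2)$ defining the cover, and the residual central $\mu_2$ (the class of $-\id_{V_6}$) acts trivially on $\mathcal{R}_2=(\bigwedge^2\mathcal{C}_2)^{\vee\vee}$ (a second exterior power) and on $\mathcal{O}_{\cub{2}}(-2)$; only then does the action descend to an honest section $\psi\colon\PGL(V_6)_A\hookrightarrow\Aut_H(\dcub{2})$. ``Functoriality of the canonical double cover'' merely gives lifts well defined up to $\iota$, i.e.\ it reproduces the extension rather than splitting it, and the obstruction is genuinely nonzero in nearby situations: in the paper's own $L_2(11)$ example (\autoref{symmetric examples}) the lift $\widetilde{\delta}$ of an element of $\Aut(\Gr)$ preserving the EPW-cube has order $4$ and squares to $\iota$. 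So ``one checks obstruction classes vanish'' cannot be waved through.

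A secondary, smaller issue concerns the symplectic dichotomy. Your argument presupposes that each lift acts on $\sigma_{\dcub{2}}$ by $\pm1$, and your fallback claim that the character $\Aut_H(\dcub{2})\to\mathbb{C}^\times$ has image $\{\pm1\}$ is asserted rather than proved; a priori the image is cyclic of some order $r\geq2$, in which case $\Aut_H^s(\dcub{2})$ would be a proper subgroup of the complement. The paper resolves this by showing that the section $\psi$ constructed above lands in the kernel of that character (the induced action of $\PGL(V_6)_A$ on $\Homology^2(\dcub{2},\mathcal{O}_{\dcub{2}})$ is trivial), so that $\psi(\PGL(V_6)_A)=\Aut_H^s(\dcub{2})$ on the nose. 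Your first suggestion --- that the two-form is built equivariantly from the Lagrangian data, so the stabilizer acts on $\Homology^{2,0}$ through $\pm1$ --- is in the right spirit, but it again requires the linearized action through $\widetilde G/\mu_3$ to be set up first.
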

\begin{proof}
Since \([A]\not\in \Sigma\), by \cite[Theorem 4.2, Theorem 5.7]{debarre2020double} we have \( \dcub{2}=\Spec(\mathcal{O}_{\cub{2}}\oplus \mathcal{R}_2(-2) )\)
 , where \(\mathcal{R}_2=(\bigwedge^2 \mathcal{C}_2)^{\vee\vee}\) and \(\mathcal{C}_2=\Coker(T\rightarrow A^\vee\otimes\mathcal{O}_{\Gr})_{\mid \cub{2}}\).
Any element \(g\in\Aut_H(\dcub{2})\) induces an automorphism of \(\mathbb{P}(\Homology^0(\dcub{2},H)^\vee)\cong\mathbb{P}(\Homology^0(\cub{2},\mathcal{O}_{\cub{2}}(1))^\vee)\cong\mathbb{P}(\bigwedge^3V_6) \) which preserves the locus \(\cub{2}\). Since \(A\) and \(\delta(A)\) are not in the same \(\PGL(V_6)\)-orbit, we have \(\Aut(\Gr)_A=\PGL(V_6)_A\). Every element of \(\PGL(V_6)_A\) preserves the sheaf \(\mathcal{O}_{\cub{2}}\oplus \mathcal{R}_2(-2)\) and its algebra structure, so it induces an automorphism of \(\dcub{2}\). According to \autoref{automorphi_are_linear}, we get a central extension
\begin{equation}\label{ses cubes}
1\to \langle\iota\rangle \to\Aut_H(\dcub{2}) \rightarrow \PGL(V_6)_A\to 1.
\end{equation}
There is another central extension 
\[1\rightarrow\mu_6\rightarrow \widetilde{G} \rightarrow  \PGL(V_6)_A\rightarrow 1,\] where \(\widetilde{G}\) is the preimage of \(\PGL(V_6)_A\) via the map \( \SL(V_6) \rightarrow \PGL(V_6)\).
Following the ideas of \cite[Appendix A.1]{debarre2022gushel}, we have that the induced action of $\widetilde{G}$ on $\bigwedge^3V_6$ factors through the quotient $\widetilde{G}/\mu_3$, and according to \cite[Lemma A.1]{debarre2022gushel} we have that $\widetilde{G}/\mu_3$ embeds into $\GL(A)$. By similar arguments as in the proof of \cite[Lemma A.2]{debarre2022gushel}, and according to the description of $\mathcal{C}_2$ given in \cite[Theorem 4.2]{debarre2020double}, we observe that the group $\widetilde{G}/\mu_3$ acts on $\dcub{2}$ preserving $H$. Moreover, since $\mu_2\leq \widetilde{G}/\mu_3$ acts trivially on both $\mathcal{R}_2$ and $\mathcal{O}_{\cub{2}}(-2)$, we get an injective morphism
\[\psi\colon \PGL(V_6)_A\hookrightarrow \Aut_H(\dcub{2})\]
which splits the short exact sequence in \autoref{ses cubes}.

The action of \(\Aut_H(\dcub{2})\) on \( \Homology^2(\dcub{2},\mathcal{O}_{\dcub{2}})\cong \mathbb{C}\sigma_{\dcub{2}}\) determines a character \(\Aut_H(\dcub{2})\rightarrow \mathbb{C}^\times\), with finite (cyclic) image of order \(r\geq 2\), that sends \(\iota\) to \(-1\). Note that $\PGL(V_6)_A$ acts trivially on \( \Homology^2(\dcub{2},\mathcal{O}_{\dcub{2}})\cong \mathbb{C}\sigma_{\dcub{2}}\) since \(\PGL(V_6)\) has no nontrivial characters. In conclusion, \(\psi\) is a section of \(\Aut_H(\dcub{2}) \rightarrow \PGL(V_6)_A\) and we have an insomorphism $\Aut_H(\dcub{2})\cong \langle\iota\rangle\times \PGL(V_6)_A$ where $\PGL(V_6)_A$ corresponds to $\Aut_H^s(\dcub{2})$.
\qedhere
\end{proof}

\section{Birational automorphisms of general double EPW-cubes}\label{sec general bir}
We compute the full group of birational automorphisms of the general smooth double EPW-cubes, and of the desingularization of the general singular double EPW-cube in the family described by $\Gamma$.\bigskip

The family of smooth double EPW-cubes described in \cite{iliev2019epw} is locally complete in moduli, so we have that for a general $[A]\notin (\Gamma\cup\Sigma)$, the associated double EPW-cube $\dcub{2}$ has Picard rank 1. In fact, its N\'eron--Severi lattice is spanned by the class of the polarization $H$, of numerical type $(4, 2)$. In particular,
\[\T(\dcub{2}) \simeq  \bU^{\oplus 2}\oplus \bE_8^{\oplus 2}\oplus[-2]^{\oplus2}\]
(see also \autoref{subsec: moduli epws}). The following is actually a special case of a more general result \cite[Proposition 4.3]{debarre2022hyper}.

\begin{prop}\label{bir general smooth cube}
    Let $[A]\notin (\Gamma\cup\Sigma)$ be general. Then
    \[\Bir(\dcub{2}) = \Aut_{H}(\dcub{2}) = \langle\iota\rangle,\]
    where $\iota$ is the covering involution. In particular, 
    \[\Bir^s(\dcub{2}) = \Aut^s(\dcub{2}) = \{\id\}.\]
\end{prop}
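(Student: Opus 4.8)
The plan is to leverage the global Torelli theorem for hyperkähler manifolds, which reduces the computation of $\Bir(\dcub{2})$ to the study of the image of $\rho_X\colon \Bir(\dcub{2})\to \bO^+(\Homology^2(\dcub{2},\mathbb{Z}))$. Since for general $[A]\notin(\Gamma\cup\Sigma)$ we have $\NS(\dcub{2}) = \mathbb{Z}H$ with $H$ of numerical type $(4,2)$ and $\T(\dcub{2})\simeq \Lambda = \bU^{\oplus2}\oplus\bE_8^{\oplus2}\oplus[-2]^{\oplus2}$, the birational automorphisms act on $\Homology^2$ preserving both $\NS$ and $\T$. First I would observe that any $f\in\Bir(\dcub{2})$ must send $H$ to $\pm H$; since $\rho_X(f)\in\bO^+$ preserves the positive cone and $H$ lies in (the closure of) the birational Kähler cone (Picard rank 1, no wall divisors needed to cut it further by genericity), one gets $\rho_X(f)(H) = H$. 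Hence $f$ is automatically a polarized birational automorphism; moreover, in Picard rank $1$ with $H$ ample there is no room for a pseudo-automorphism that is not biregular (any birational map between $\mathbb{Q}$-factorial minimal models with the same polarization extends to an isomorphism), so $\Bir(\dcub{2}) = \Aut_H(\dcub{2})$.

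Next I would pin down $\Aut_H(\dcub{2})$ via \autoref{automorphisms_double_cover}: for general $[A]$ the stabilizer $\PGL(V_6)_A$ is trivial — a general Lagrangian $A$ has no nontrivial linear symmetries, and genericity also guarantees $[A]\notin\Sigma$ and that $A,\delta(A)$ are not in the same $\PGL(V_6)$-orbit (this is the condition defining the ramification of $p$ in \autoref{moduli square}, a proper closed condition). Then \autoref{automorphisms_double_cover} gives $\Aut_H(\dcub{2})\cong \PGL(V_6)_A\times\langle\iota\rangle = \langle\iota\rangle$. Combining the two steps yields $\Bir(\dcub{2}) = \Aut_H(\dcub{2}) = \langle\iota\rangle$.

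Finally, for the symplectic statement: the covering involution $\iota$ acts as $-1$ on $\Homology^0(\dcub{2},\Omega^2)\cong\mathbb{C}\sigma$ — indeed, as recorded in the proof of \autoref{automorphisms_double_cover}, the character $\Aut_H(\dcub{2})\to\mathbb{C}^\times$ on $\Homology^2(\dcub{2},\mathcal{O})$ sends $\iota$ to $-1$, so $\iota$ is anti-symplectic. Therefore $\Bir^s(\dcub{2}) = \Aut^s(\dcub{2})$ is the subgroup of $\langle\iota\rangle$ acting trivially on $\sigma$, which is $\{\id\}$.

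The main obstacle is the first step: justifying carefully that a birational automorphism of a Picard rank $1$ hyperkähler manifold with ample generator $H$ is automatically biregular and $H$-polarized. One must argue that $\rho_X(f)$ preserves the positive cone (true since $f^*$ preserves the Fujiki form and $X$ is Kähler, so $\rho_X(f)\in\bO^+$ by the Strong Torelli theorem), hence fixes the ample ray $\mathbb{R}_{>0}H$, and then that a birational map between hyperkähler manifolds identifying ample classes is an isomorphism (this is where $X$ being of $\KTcube$--type, with $\rho_X$ injective, and the absence of small contractions/flops for an ample class — equivalently, $H$ being in the interior of the movable cone and on no wall — are used; since $\NS(\dcub{2})$ has rank $1$ the movable cone is a single ray and there are no walls, so the argument is immediate). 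This is precisely the content invoked from \cite[Proposition 4.3]{debarre2022hyper}, so it suffices to cite it, but the cleanest self-contained route is the one sketched above.
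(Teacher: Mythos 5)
Your proposal is correct and follows essentially the same route as the paper: the rank-one positive-definite N\'eron--Severi lattice forces every birational automorphism to fix $H$, hence $\Bir(\dcub{2})=\Aut_H(\dcub{2})$ consists of regular polarized automorphisms, and one concludes via \autoref{automorphisms_double_cover}. The only divergence is how $\PGL(V_6)_A=1$ is obtained: you invoke the geometric fact that a general Lagrangian has trivial stabilizer (which the paper records separately with a citation to \cite[Proposition B.9]{dk20}), whereas the paper's proof deduces it internally --- a symplectic birational automorphism acts trivially on $\NS(\dcub{2})$ and hence also on the irreducible Hodge structure $\T(\dcub{2})$, so $\Bir^s(\dcub{2})=\{\id\}$ and therefore $\PGL(V_6)_A=\Aut^s_H(\dcub{2})$ must vanish --- which spares you the reliance on that genericity statement.
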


\begin{proof}
    Any birational automorphism of $\dcub{2}$ induces an isometry in $\bO^+(\NS(\dcub{2}))$: since the N\'eron-Severi lattice of $\dcub{2}$ is positive definite of rank 1, we have that any birational automorphism of $\dcub{2}$ acts trivially on  $\NS(\dcub{2})$. In particular, we already observe that $\Bir(\dcub{2}) = \Aut_H(\dcub{2})$ and that $\Bir^s(\dcub{2})$ is trivial. Together, we infer that $\Bir(\dcub{2})$ is cyclic, generated by a regular purely nonsymplectic automorphism. We conclude using the description of $\Aut_H(\dcub{2})$ given in \autoref{automorphisms_double_cover}.
\end{proof}

\begin{rmk}
    For $[A]\in \LG$ general, it is already known that $\PGL(V_6)_A$ is trivial \cite[Proposition B.9]{dk20}.
\end{rmk}

We show in the rest of this section that we observe a similar result for the desingularization of the double EPW-cube for a general $[A]\in\Gamma$.\smallskip

According to \cite[Lemma 3.2]{rizzo}, if $[A]\in\Gamma\setminus\Sigma$ is a general Lagrangian, then $\cub{4}$ consists of a unique (smooth) point which is the only singular point of $\dcub{2}$. Hence, following \cite[Propositon 5.1, Lemma 5.2, Theorem 5.3]{rizzo} we have that $\dcub{2}$ admits two smooth projective small resolutions \(\dcub{2}^{\epsilon_1}\rightarrow\dcub{2}\) and \(\dcub{2}^{\epsilon_2}\rightarrow\dcub{2}\) which are hyperk\"ahler manifolds of $\KTcube$--type. These two manifolds are quasipolarized by a big and nef class of numerical type $(4, 2)$, and they are related by the flop of a $\mathbb{P}^3$.

Since the family of resolutions of singular double EPW-cubes parametrized by $\Gamma\setminus\Sigma$ is 19-dimensional we have that for a general $[A]\in \Gamma$, the Picard rank of $\dcub{2}^{\epsilon_1}$ (and $\dcub{2}^{\epsilon_2}$) is at most 2.

\begin{lem}\label{lem ns general A gamma}
    Let \([A]\in\Gamma\) be general. We have \[\NS(\dcub{2}^{\epsilon_1})\simeq \begin{pmatrix}
    4&2\\2&-2
\end{pmatrix}\] and \[\textnormal{T}(\dcub{2}^{\epsilon_1})\simeq \bU^{\oplus2}\oplus \bA_2\oplus \bD_7\oplus \bE_8.\]
\end{lem}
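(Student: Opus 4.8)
The plan is to determine the Néron–Severi lattice of a small resolution $X := \dcub{2}^{\epsilon_1}$ for a general $[A]\in\Gamma\setminus\Sigma$ by identifying two distinguished classes inside it and computing the intersection form they span, then recover the transcendental lattice by a lattice-theoretic orthogonal complement argument. First I would recall from \cite[Lemma 3.2, Proposition 5.1, Theorem 5.3]{rizzo} that $\dcub{2}$ has a single ordinary singular point, that $X\to \dcub{2}$ is a small resolution contracting a $\mathbb{P}^3$, and that $X$ carries a big and nef divisor $H$ of numerical type $(4,2)$ pulled back from the polarization of $\dcub{2}$. The exceptional $\mathbb{P}^3\subset X$ gives a second class: one computes the class $e$ of a line inside that $\mathbb{P}^3$ (equivalently, the class dual to the exceptional divisor under the BBF form), and since the contraction is a $\mathbb{P}^3$-contraction the relevant primitive class has square $-2$ in $\NS(X)$ — this is exactly the kind of class appearing in $\mathcal{W}^{pex}_{\KTcube}$ with type $(-2,1)$. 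Then $H$ and this class span a rank-$2$ sublattice of $\NS(X)$; computing $H^2 = 4$, the self-intersection $-2$, and the pairing $H\cdot e$ (which equals $2$, reflecting that $H$ restricts to $\mathcal{O}(2)$ on the contracted $\mathbb{P}^3$ up to the conventions of the BBF form, or more precisely is forced by the divisibility $2$ of $H$) yields the Gram matrix $\begin{psmallmatrix}4&2\\2&-2\end{psmallmatrix}$.

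Next I would argue that this rank-$2$ lattice is all of $\NS(X)$, not merely a finite-index sublattice. The bound "Picard rank $\le 2$" from the $19$-dimensional moduli count gives one inequality; combined with the existence of the two independent classes $H$ and $e$ this forces $\rk\NS(X)=2$. To rule out a nontrivial overlattice, I would note that the discriminant of $\begin{psmallmatrix}4&2\\2&-2\end{psmallmatrix}$ is $-12$, and any proper even overlattice would have to correspond to an isotropic subgroup of the discriminant form; one checks (using that the discriminant group here is cyclic of order $12$ with the appropriate quadratic form) that the only possible even overlattice changing the discriminant is obstructed, either because the candidate primitive vector does not have a type compatible with $\bL_{\KTcube}$ (use \autoref{easy divisibility} together with \autoref{wall_divisors} to see which classes of small square can appear) or because it would force additional curve classes not present for a general member of $\Gamma$. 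Concretely: a general $[A]\in\Gamma$ lies on exactly one of the distinguished divisors, so $\NS(X)$ is the saturation of $\langle H\rangle$ together with the single flopping class, and no further saturation occurs.

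Finally, to obtain $\T(X)$ I would use that $\NS(X)$ and $\T(X)$ are mutually orthogonal primitive sublattices of $\bL_{\KTcube} = \bU^{\oplus 3}\oplus\bE_8^{\oplus 2}\oplus[-4]$ whose discriminant forms are anti-isometric (\cite[Corollary 1.6.2]{nikulin}), so $\T(X)$ has signature $(2,19)$, is even, and has discriminant form $-q_{\NS(X)}$. One then exhibits the explicit lattice $\bU^{\oplus 2}\oplus\bA_2\oplus\bD_7\oplus\bE_8$: it has rank $21$, signature $(2,19)$, is even, and a direct computation of its discriminant form (the $\bU^{\oplus 2}\oplus\bE_8$ part is unimodular, so the discriminant form is that of $\bA_2\oplus\bD_7$, which is $(\mathbb{Z}/3)\times(\mathbb{Z}/4)$ with the standard forms) shows it matches $-q_{\NS(X)}$. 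By Nikulin's uniqueness criterion \cite[Corollary 1.13.3]{nikulin} — an indefinite even lattice of rank $\ge \ell(D)+2$ is determined by signature and discriminant form — this identifies $\T(X)$ uniquely, concluding the proof.

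The main obstacle I anticipate is the second step: proving that the rank-$2$ lattice is genuinely saturated in $\bL_{\KTcube}$ (i.e. that $\NS(X)$ is exactly $\begin{psmallmatrix}4&2\\2&-2\end{psmallmatrix}$ and not an overlattice) for the \emph{general} $[A]\in\Gamma$. This requires combining the transcendental moduli-dimension count with a careful check that no small-square class forcing an overlattice can appear — essentially a local-Torelli / genericity argument showing the period of the general singular double EPW-cube's resolution avoids the relevant Noether–Lefschetz loci, together with the lattice-embedding bookkeeping for $[-4]\hookrightarrow$ the relevant discriminant. The intersection-number computations ($H^2=4$, $H\cdot e=2$, $e^2=-2$) and the discriminant-form matching are routine once the $\mathbb{P}^3$-contraction geometry of \cite{rizzo} is invoked.
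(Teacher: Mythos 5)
Your overall architecture (two classes spanning $\NS$, a saturation check, then the orthogonal complement) matches the paper's, but two of your key steps contain errors that would derail the argument.

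First, the class attached to the contracted $\mathbb{P}^3$ is not what you describe. Since $\pi^{\epsilon_1}$ is a \emph{small} resolution there is no exceptional divisor, and by Hassett--Tschinkel the wall divisor $D$ corresponding to a $\mathbb{P}^3$-contraction on a $\KTcube$-type manifold has numerical type $(-12,2)$, not $(-2,1)$: the types in $\mathcal{W}^{pex}_{\KTcube}$ belong to divisorial contractions. Moreover $H$ is pulled back from $\dcub{2}$, so it restricts to the \emph{trivial} bundle on the contracted $\mathbb{P}^3$; your claim that it restricts to $\mathcal{O}(2)$, whence $H\cdot e=2$, is false, and the correct statement is $D\perp H$. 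The two geometric classes therefore only give $\langle H, D\rangle\simeq [4]\oplus[-12]$, of determinant $-48$, and the step you are missing is the index-two overlattice: since $\divisibility(H,\bL_{\KTcube})=\divisibility(D,\bL_{\KTcube})=2$ and $D_{\bL_{\KTcube}}\cong \mathbb{Z}/4\mathbb{Z}$ has a unique element of order two, the classes $H/2$ and $D/2$ coincide in the discriminant group, so $(H+D)/2\in\bL_{\KTcube}$. The lattice $\langle H,(H+D)/2\rangle$ has the stated Gram matrix and determinant $-12$, and one checks it admits no even overlattice, hence is primitive; this is the paper's route to the Gram matrix you wrote down.

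Second, your determination of $\T(\dcub{2}^{\epsilon_1})$ is broken: the anti-isometry $q_{\T}\cong -q_{\NS}$ from \cite[Corollary 1.6.2]{nikulin} holds for primitive sublattices of \emph{unimodular} lattices, and $\bL_{\KTcube}$ is not unimodular. In fact $D_{\NS}\cong\mathbb{Z}/2\mathbb{Z}\times\mathbb{Z}/6\mathbb{Z}$ (not cyclic of order $12$), whereas $D_{\bA_2\oplus\bD_7}\cong\mathbb{Z}/12\mathbb{Z}$, so the two discriminant groups are not even isomorphic and the "matching" you propose would fail if carried out. One must instead compute the genus of the orthogonal complement via \cite[Proposition 1.15.1]{nikulin} (equivalently, pass to a unimodular overlattice first), together with the uniqueness of the primitive embedding $\NS(\dcub{2}^{\epsilon_1})\hookrightarrow\bL_{\KTcube}$ up to $\bO(\bL_{\KTcube})$, which is how the paper concludes.
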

\begin{proof}
The quasipolarization of numerical type $(4, 2)$ of $\dcub{2}^{\epsilon_1}$ gives a class \(H\in\NS(\dcub{2}^{\epsilon_1})\). 
Now, by \cite[Theorem 5.3]{rizzo}, we know that the map $\pi^{\epsilon_1}\colon\dcub{2}^{\epsilon_1}\rightarrow\dcub{2}$ is a small resolution that contracts a $\mathbb{P}^3$ to the singular point of $\dcub{2}$.
According to \cite{hassett2010intersection}, this corresponds to the existence of a class
\(D\in\NS(\dcub{2}^{\epsilon_1})\) of type $(-12, 2)$ and which is orthogonal to \(H\).
This implies that \(\NS(\dcub{2}^{\epsilon_1})\) has rank 2 and we have embeddings $\langle H, D\rangle\subset \NS(\bcub{2}^\epsilon)\subset \bL_{\KTcube}$, where the first is of finite index and the second is primitive.
Observe that since \(\divisibility(H,\bL_{\KTcube})=\divisibility(D,\bL_{\KTcube})=2\) and the discriminant group of $\bL_{\KTcube}$ has only one element of order \(2\), we have that \(\langle H, (H+D)/2\rangle\) is an even overlattice of $\langle H, D\rangle$. The lattice \(\langle H, (H+D)/2\rangle\) has determinant $-12$, and none of its overlattices are even: we deduce that \(\langle H, (H+D)/2\rangle\) is actually primitive in \(\bL_{\KTcube}\). 
In conclusion, we have \(\NS(\dcub{2}^{\epsilon_1})\simeq \langle H, (H+D)/2\rangle\), which has the wanted Gram matrix. 
Finally, one can check that the embedding \(\NS(\dcub{2}^{\epsilon_1})\subset \bL_{\KTcube}\) is unique up to the action of $\bO(\bL_{\KTcube})$ and that its orthogonal complement is isometric to $\bU^{\oplus2}\oplus \bA_2\oplus \bD_7\oplus \bE_8$.
\end{proof}

We give an explicit description of the movable cone $\Mov(\dcub{2}^{\epsilon_1})$ for a general Lagrangian space \([A]\in\Gamma\). 
Classes that are dual to extremal rays of the Mori cone of a hyperk\"ahler manifold of \(\KTcube\)--type are numerically characterized and called wall divisors, see \autoref{wall_divisors}. The possible squares of wall divisors for hyperk\"ahler manifolds of \(\KTcube\)--type are \(-2,-4,-12\) or \(-36\). We see from \autoref{lem ns general A gamma} that \(\NS(\dcub{2}^{\epsilon_1})\) has no isotropic vectors nor primitive vectors of square $-4$ or $-36$. The only possibility left are vectors of type \((-2,1)\) or \((-12,2)\).

Recall that the movable cone of \(\dcub{2}^{\epsilon_1}\) is the closure of a fundamental domain which contains the class $H$, for the action of the so-called \emph{Weyl group} $W\leq \bO^+(\NS(\dcub{2}^{\epsilon_1})\otimes\mathbb{R})$ on the positive cone of $\dcub{2}^{\epsilon_1}$. The latter group $W$ is generated by the reflections in integral prime exceptional divisors of \(\dcub{2}^{\epsilon_1}\). Since the latter prime exceptional divisors are necessarily of square $-2$, we can use an algorithm of Vinberg \cite{vin75}\footnote{see notebook "NSgeneral" of \cite{database} for how we use it on the Oscar sytem \cite{OSCAR}} to find a set of such vectors whose orthogonal complements bound the so-called fundamental exceptional chamber $\mathcal{FE}_{\dcub{2}^{\epsilon_1}}$ of $\dcub{2}^{\epsilon_1}$. By doing so, one determines that such a set of prime exceptional divisors consists of
  \[(H-D)/2\quad \text{ and }\quad  (H+D)/2,\]
and their respective orthogonal complements are spanned by
\[(3H-D)/2\quad \text{ and }\quad  (3H+D)/2\]
which have both type \((6,1)\). Similarly, we find that the K\"ahler cone of \(\dcub{2}^{\epsilon_1}\) is one of the two chambers obtained by cutting $\mathcal{FE}_{\dcub{2}^{\epsilon_1}}$ with $\mathbb{R}H = D^\perp$. The situation being symmetric for $\dcub{2}^{\epsilon_1}$ and $\dcub{2}^{\epsilon_2}$, related by a flop which is birational but nonregular, we obtain that the other chamber is the pullback by $f$ of $\mathcal{K}_{\dcub{2}^{\epsilon_2}}$. 
We draw in \autoref{fig1} the birational K\"ahler cone of \(\dcub{2}^{\epsilon_1}\) which is the disjoint union of two K\"ahler-type chambers being the K\"ahler cone \(\mathcal{K}_{\dcub{2}^{\epsilon_1}}\) of \(\dcub{2}^{\epsilon_1}\) and the pullback \(f^\ast\mathcal{K}_{\dcub{2}^{\epsilon_2}}\) of the K\"ahler cone of \(\dcub{2}^{\epsilon_2}\) by the flop \(f\colon\dcub{2}^{\epsilon_1}\dashrightarrow\dcub{2}^{\epsilon_2}\).

\begin{lem}
    The two projective resolutions $\dcub{2}^{\epsilon_1}$ and $\dcub{2}^{\epsilon_2}$ are isomorphic as projective varieties.
\end{lem}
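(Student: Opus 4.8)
The plan is to exploit the explicit description of the birational geometry already established: $\dcub{2}^{\epsilon_1}$ and $\dcub{2}^{\epsilon_2}$ are small resolutions of the same singular variety $\dcub{2}$, related by the flop $f$ of a single $\mathbb{P}^3$, and both carry a N\'eron--Severi lattice isometric to $\left(\begin{smallmatrix}4&2\\2&-2\end{smallmatrix}\right)$ with the movable cone subdivided into the two K\"ahler-type chambers $\mathcal{K}_{\dcub{2}^{\epsilon_1}}$ and $f^\ast\mathcal{K}_{\dcub{2}^{\epsilon_2}}$ drawn in \autoref{fig1}. The strategy is to produce an isometry of $\NS(\dcub{2}^{\epsilon_1})$ that swaps these two chambers, preserves the Hodge structure (trivially, since the transcendental part is untouched), sends effective classes to effective classes, and maps a K\"ahler class into the K\"ahler cone; the Torelli theorem for hyperk\"ahler manifolds of $\KTcube$--type then upgrades this to a biregular isomorphism.

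Concretely, first I would identify the candidate isometry. In the basis in which $\NS(\dcub{2}^{\epsilon_1}) = \langle H, (H+D)/2\rangle$, the reflection $s$ in the wall divisor of type $(-2,1)$ that separates $\mathcal{K}_{\dcub{2}^{\epsilon_1}}$ from $f^\ast\mathcal{K}_{\dcub{2}^{\epsilon_2}}$ — this is exactly one of the classes $(H\pm D)/2$ found above — fixes $H$ (since $H\cdot D=0$) and acts by $-1$ on $\mathbb{R}D$, hence exchanges the two chambers adjacent to the wall $D^\perp = \mathbb{R}H$. I would check that $s\in \bO^+(\NS(\dcub{2}^{\epsilon_1}))$ (it preserves the positive cone, fixing $H$) and that it extends to $\Homology^2(\dcub{2}^{\epsilon_1},\mathbb{Z})$ by the identity on the transcendental lattice $\bU^{\oplus2}\oplus \bA_2\oplus \bD_7\oplus \bE_8$; because $\NS$ and $\T$ here form a primitive orthogonal decomposition only up to a glue of order $2$, I would verify compatibility on the discriminant groups, adjusting by the action of $-\id$ on $\T$ if necessary so that the pair extends to a genuine monodromy isometry $\varphi$ of $\Homology^2$. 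Since $\varphi$ is a Hodge isometry sending the K\"ahler cone $\mathcal{K}_{\dcub{2}^{\epsilon_1}}$ to $f^\ast\mathcal{K}_{\dcub{2}^{\epsilon_2}}$, and the latter is the K\"ahler cone of $\dcub{2}^{\epsilon_2}$ transported through the birational map $f$, the composition $\varphi$ followed by (the cohomological action of) $f^{-1}$ is a Hodge isometry sending $\mathcal{K}_{\dcub{2}^{\epsilon_1}}$ to $\mathcal{K}_{\dcub{2}^{\epsilon_2}}$; the global Torelli theorem (\cite[Theorem 1.3]{markman2011survey}, as recalled in the excerpt) then yields a biregular isomorphism $\dcub{2}^{\epsilon_1}\xrightarrow{\sim}\dcub{2}^{\epsilon_2}$.

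The main obstacle I anticipate is the lattice-theoretic bookkeeping needed to guarantee that the chamber-swapping reflection $s$ of $\NS$ really is the restriction of an honest \emph{monodromy} isometry of $\Homology^2(\dcub{2}^{\epsilon_1},\mathbb{Z})$ — i.e., an element of $\mathrm{Mon}^2 = \bO^+(\bL_{\KTcube})$ — rather than merely of $\bO(\NS)$. One must check that $s$ acts trivially (or, together with $\pm\id_{\T}$, compatibly) on the discriminant form, using that $D_{\bL_{\KTcube}}$ has a unique element of order $2$ and the gluing described in \autoref{lem ns general A gamma}; here the freedom to precompose with $-\id$ on $\T$, which is always a monodromy operator since $\bL_{\KTcube}$ has odd positive signature (as used repeatedly in the excerpt), should resolve any sign obstruction. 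The remaining verification — that $s$ sends the nef/effective cone data to the correct chamber — is immediate from the explicit walls $(H\pm D)/2$ and $(3H\pm D)/2$ computed via Vinberg's algorithm just above, so no further geometric input is required; one could alternatively phrase the whole argument by noting that two marked pairs with the same periods and with K\"ahler cones lying in the same connected component of the positive cone minus the wall-divisor hyperplanes are isomorphic, and both $\dcub{2}^{\epsilon_1}$, $\dcub{2}^{\epsilon_2}$ fit this description after the identification of their $\NS$ lattices.
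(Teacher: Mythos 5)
Your overall strategy coincides with the paper's: produce a Hodge isometry that fixes $H$, negates $D$, and hence swaps the two K\"ahler-type chambers, compose with the action $f^\ast$ of the flop, and invoke the Torelli theorem. The difference is in how that isometry is obtained. The paper realizes it geometrically: the covering involution $\iota$ of $\dcub{2}$ lifts to a birational automorphism $\hat{\iota}$ of $\dcub{2}^{\epsilon_1}$ whose cohomological action fixes $H$ and is $-\id$ on $H^{\perp}$; being induced by an actual birational self-map, $\hat{\iota}^\ast$ is automatically a parallel-transport Hodge isometry, so all of the lattice bookkeeping you anticipate (extension across the discriminant glue, membership in $\mathrm{Mon}^2=\bO^+(\bL_{\KTcube})$) is bypassed. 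Your abstract construction can be made to work, but note that you have misidentified the relevant reflection: the isometry that fixes $H$ and acts by $-1$ on $\mathbb{R}D$ is $\tau_D$, the reflection in the class $D$ of type $(-12,2)$, \emph{not} the reflection in one of the $(-2)$-classes $(H\pm D)/2$. The reflections $\tau_{(H\pm D)/2}$ generate the Weyl group bounding the fundamental exceptional chamber, do not fix $H$ (for instance $\tau_{(H+D)/2}(H)=2H+D$), and do not interchange $\mathcal{K}_{\dcub{2}^{\epsilon_1}}$ with $f^\ast\mathcal{K}_{\dcub{2}^{\epsilon_2}}$. Since your functional description (fix $H$, negate $D$) pins down the correct map, the argument survives the correction; moreover, as the paper notes elsewhere, $\tau_D$ is not stable, so your fallback of extending by $-\id$ on $\T(\dcub{2}^{\epsilon_1})$ is in fact forced rather than optional, and the resulting isometry is exactly $\hat{\iota}^\ast$. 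What the paper's route buys is brevity and the absence of any discriminant-form or spinor-norm verification; what yours buys is independence from the existence of the lift $\hat{\iota}$, at the cost of the extra lattice checks you correctly flag as the main obstacle.
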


\begin{proof}
    The covering involution \(\iota\) on $\dcub{2}$ induces a birational automorphism $\hat{\iota}$ of $\dcub{2}^{\epsilon_1}$ whose action on cohomology $\hat{\iota}^\ast$ fixes $H$ and is negative identity on the orthogonal complement. This is a nonsymplectic involution which is nonregular, as the action on $\NS(\dcub{2}^{\epsilon_1})$ given by the reflection $\tau_D$ in \autoref{fig1} does not preserve $\mathcal{K}_{\dcub{2}^{\epsilon_1}}$. But, if we denote by $f^\ast\colon \Homology^2(\dcub{2}^{\epsilon_2}, \mathbb{Z})\to \Homology^2(\dcub{2}^{\epsilon_1}, \mathbb{Z})$ the Hodge isometry induced by the flop $f$, then the isometry $\hat{\iota}^\ast\circ f^\ast$ sends $\mathcal{K}_{\dcub{2}^{\epsilon_2}}$ to $\mathcal{K}_{\dcub{2}^{\epsilon_1}}$. By Torelli theorem, this implies that the two resolutions are actually isomorphic.
\end{proof}

\begin{notation}
    For $[A]\in \Gamma\setminus\Sigma\subset \LG$ general, we write $\dcub{2}^{\epsilon} := \dcub{2}^{\epsilon_1}\cong \dcub{2}^{\epsilon_2}$ for the smooth projective hyperk\"ahler desingularization of $\dcub{2}$.
\end{notation}

\begin{figure}[t]
    \caption{Movable cone of $\dcub{2}^{\epsilon_1}$}\vspace*{0.3cm}
    \centering
    \includegraphics[width=10cm]{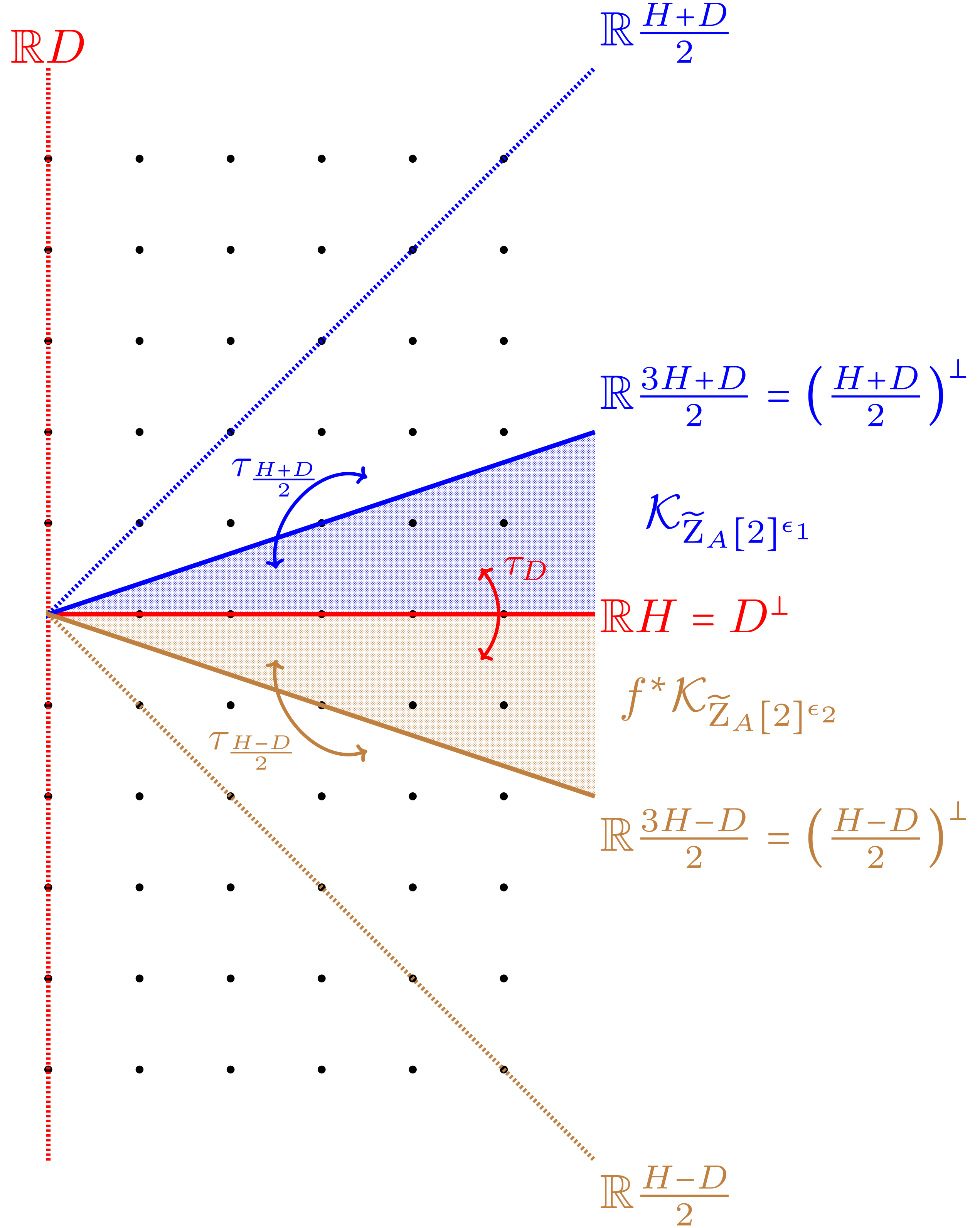}
    \label{fig1}
\end{figure}

\begin{prop}
     Let \([A]\in\Gamma\) be general. Then 
     \[\Bir(\dcub{2}^{\epsilon}) = \Bir_H(\dcub{2}^{\epsilon}) = \langle \hat{\iota}\rangle.\]
     In particular, \[\Bir^s(\dcub{2}^{\epsilon}) = \Aut(\dcub{2}^{\epsilon}) = \{\id\}.\]
\end{prop}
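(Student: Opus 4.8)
The plan is to mimic the structure of the proof of \autoref{bir general smooth cube}, but now working with the rank-two N\'eron--Severi lattice of \autoref{lem ns general A gamma} and the chamber decomposition recorded in \autoref{fig1}. First I would note that any $f\in\Bir(\dcub{2}^{\epsilon})$ induces a Hodge isometry of $\Homology^2(\dcub{2}^{\epsilon},\mathbb{Z})$ preserving $\NS(\dcub{2}^{\epsilon})$, hence an element of $\bO^+(\NS(\dcub{2}^{\epsilon}))$; since for general $[A]\in\Gamma$ the transcendental lattice $\bU^{\oplus2}\oplus\bA_2\oplus\bD_7\oplus\bE_8$ has trivial Hodge isometry group up to sign, and $-\id$ does not preserve a positive cone, the action on $\NS$ determines $f^\ast$ and hence, by the injectivity of $\rho_X$ for $\KTcube$--type, determines $f$ itself. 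So it suffices to understand $\bO^+(\NS(\dcub{2}^{\epsilon}))$ together with which of its elements preserve the movable cone and which preserve the K\"ahler cone.

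Next I would compute $\bO^+(\NS(\dcub{2}^{\epsilon}))$ for the Gram matrix $\left(\begin{smallmatrix}4&2\\2&-2\end{smallmatrix}\right)$. By the wall-divisor analysis preceding \autoref{fig1}, the prime exceptional divisors are the two $(-2)$-classes $(H\pm D)/2$, and the two walls of the movable cone are their orthogonals, spanned by the $(6,1)$-classes $(3H\pm D)/2$; the Weyl group $W$ is generated by the two reflections $\tau_{(H-D)/2}$ and $\tau_{(H+D)/2}$. Then $\Bir(\dcub{2}^{\epsilon})$ corresponds, via $\rho_X$, to the stabilizer in $\bO^+(\NS)$ of the fundamental exceptional chamber $\mathcal{FE}_{\dcub{2}^{\epsilon}}$ (using the Torelli-type description of the birational automorphism group, e.g.\ \cite[Proposition 4.3]{debarre2022hyper} as in \autoref{bir general smooth cube}), while $\Aut(\dcub{2}^{\epsilon})$ corresponds to the stabilizer of the K\"ahler cone $\mathcal{K}_{\dcub{2}^{\epsilon}}$, one of the two halves of $\mathcal{FE}_{\dcub{2}^{\epsilon}}$ cut out by $\mathbb{R}H = D^\perp$. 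A direct check shows $\mathcal{FE}_{\dcub{2}^{\epsilon}}$ is a cone bounded by $(3H-D)/2$ and $(3H+D)/2$, symmetric under the reflection $\tau_D$ (which swaps the two walls and the two K\"ahler chambers), and its only nontrivial symmetry in $\bO^+(\NS)$ is $\tau_D = \hat\iota^\ast$. Hence $\Bir(\dcub{2}^{\epsilon}) = \langle\hat\iota\rangle$; and since $\tau_D$ does not preserve $\mathcal{K}_{\dcub{2}^{\epsilon}}$ (it interchanges $\mathcal{K}_{\dcub{2}^{\epsilon}}$ with $f^\ast\mathcal{K}_{\dcub{2}^{\epsilon_2}}$), we get $\Aut(\dcub{2}^{\epsilon}) = \{\id\}$. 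Because $\hat\iota$ fixes $H$, it preserves the polarization, so $\Bir_H(\dcub{2}^{\epsilon}) = \Bir(\dcub{2}^{\epsilon}) = \langle\hat\iota\rangle$; and since $\hat\iota$ acts by $-\id$ on the transcendental lattice it is nonsymplectic, whence $\Bir^s(\dcub{2}^{\epsilon}) = \{\id\}$.

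The main obstacle I anticipate is pinning down the full isometry group $\bO^+(\NS(\dcub{2}^{\epsilon}))$ and verifying that the only nontrivial isometry preserving the movable cone is $\tau_D$ — this is the step where one must actually solve the Pell-type equation attached to $\left(\begin{smallmatrix}4&2\\2&-2\end{smallmatrix}\right)$ (equivalently, rule out infinite-order isometries sneaking into the stabilizer of $\mathcal{FE}_{\dcub{2}^{\epsilon}}$), and correctly match the geometric cones of \autoref{fig1} with the fundamental domains for $W$. The remaining ingredients — injectivity of $\rho_X$ in $\KTcube$--type, triviality of the Hodge isometries of the transcendental lattice for general $[A]$, the Torelli-type identification of $\Bir$ and $\Aut$ with cone stabilizers, and the fact that $\hat\iota$ is nonsymplectic and $H$-preserving — are all either quoted from the excerpt or immediate, so the argument should be short once the cone bookkeeping is in place.

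\begin{proof}
    Any $f\in\Bir(\dcub{2}^{\epsilon})$ induces a Hodge isometry $f^\ast$ of $\Homology^2(\dcub{2}^{\epsilon},\mathbb{Z})$ which preserves $\NS(\dcub{2}^{\epsilon})$. For $[A]\in\Gamma$ general the transcendental lattice $\T(\dcub{2}^{\epsilon})\simeq \bU^{\oplus2}\oplus \bA_2\oplus \bD_7\oplus \bE_8$ has no Hodge isometries other than $\pm\id$, and since $-\id$ reverses the orientation of the positive cone, $f^\ast$ is determined by its restriction to $\NS(\dcub{2}^{\epsilon})$; by injectivity of $\rho_X$ in $\KTcube$--type, $f$ is determined by $f^\ast$. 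Thus $\Bir(\dcub{2}^{\epsilon})$ embeds into $\bO^+(\NS(\dcub{2}^{\epsilon}))$.

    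By the discussion preceding \autoref{fig1}, the Weyl group $W$ is generated by the reflections $\tau_{(H-D)/2}$ and $\tau_{(H+D)/2}$ in the two prime exceptional divisors of square $-2$, and the fundamental exceptional chamber $\mathcal{FE}_{\dcub{2}^{\epsilon}}$ is the cone bounded by the two $(6,1)$-classes $(3H-D)/2$ and $(3H+D)/2$. As in \autoref{bir general smooth cube} (see \cite[Proposition 4.3]{debarre2022hyper}), $\Bir(\dcub{2}^{\epsilon})$ is identified via $\rho_X$ with the stabilizer of $\mathcal{FE}_{\dcub{2}^{\epsilon}}$ in $\bO^+(\NS(\dcub{2}^{\epsilon}))$, and $\Aut(\dcub{2}^{\epsilon})$ with the stabilizer of the K\"ahler cone $\mathcal{K}_{\dcub{2}^{\epsilon}}$. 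A direct computation with the Gram matrix $\left(\begin{smallmatrix}4&2\\2&-2\end{smallmatrix}\right)$ shows that the only nontrivial isometry of $\NS(\dcub{2}^{\epsilon})$ fixing $\mathcal{FE}_{\dcub{2}^{\epsilon}}$ is the reflection $\tau_D$ in $D^\perp = \mathbb{R}H$, which coincides with $\hat{\iota}^\ast$. Hence $\Bir(\dcub{2}^{\epsilon}) = \langle\hat{\iota}\rangle$. Since $\hat{\iota}^\ast = \tau_D$ swaps the two K\"ahler-type chambers $\mathcal{K}_{\dcub{2}^{\epsilon}}$ and $f^\ast\mathcal{K}_{\dcub{2}^{\epsilon_2}}$ of \autoref{fig1}, it does not preserve $\mathcal{K}_{\dcub{2}^{\epsilon}}$, so $\Aut(\dcub{2}^{\epsilon}) = \{\id\}$.

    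Finally, $\hat{\iota}^\ast$ fixes $H$, so $\hat{\iota}$ preserves the polarization and $\Bir_H(\dcub{2}^{\epsilon}) = \Bir(\dcub{2}^{\epsilon}) = \langle\hat{\iota}\rangle$. As $\hat{\iota}^\ast$ acts as $-\id$ on $\T(\dcub{2}^{\epsilon})$, the involution $\hat{\iota}$ is nonsymplectic, whence $\Bir^s(\dcub{2}^{\epsilon}) = \{\id\}$.
\end{proof}
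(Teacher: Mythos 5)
Your overall route is the same as the paper's: reduce to the induced action on $\NS(\dcub{2}^{\epsilon})$, identify the image of $\Bir$ with the isometries preserving the fundamental exceptional chamber, show that this stabilizer is $\langle\tau_D\rangle$, and observe that $\tau_D=\hat\iota^\ast|_{\NS}$ is nonsymplectic and swaps the two K\"ahler-type chambers. The deferred cone computation (which you rightly flag as the main obstacle) is also how the paper proceeds: it records that $\bO(\NS)=\bO^+(\NS)\times\{\pm\id\}$ with $\bO^+(\NS)$ generated by $\tau_D$, $\tau_{(H+D)/2}$, $\tau_{(H-D)/2}$, and uses the relation $\tau_{(H+D)/2}\tau_D=\tau_D\tau_{(H-D)/2}$ to see that every element is $\tau_D^i\alpha$ with $\alpha$ a word in the two $(-2)$-reflections, and that any nontrivial such $\alpha$ moves $H$ out of $\mathcal{FE}_{\dcub{2}^{\epsilon}}$; so that part of your plan is sound, if left implicit.

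There is, however, one genuinely flawed step: your claim that ``$-\id$ does not preserve a positive cone, [so] the action on $\NS$ determines $f^\ast$.'' The isometry $\id_{\NS}\oplus(-\id_{\T})$ acts trivially on $\NS(\dcub{2}^{\epsilon})_\RR$, hence preserves the positive cone there, and $-\id_{\T}$ has real spinor norm $(-1)^2=+1$ since $\T$ has signature $(2,19)$, so it even lies in $\bO^+$; no orientation or cone argument rules it out. If such an isometry were realized by a birational map, composing with $\hat\iota$ would produce a nontrivial \emph{symplectic} birational involution and the ``in particular'' part of the statement would fail. The correct obstruction, which is exactly what the paper uses, is the gluing condition on discriminant groups: since $\det\NS(\dcub{2}^{\epsilon})=-12$, the glue group between $\NS$ and $\T$ inside $\bL_{\KTcube}$ has elements of order $>2$, so a given isometry of $\NS$ admits at most one sign of extension to $\T$; concretely, $\tau_D$ is nonstable and therefore extends only by $-\id_{\T}$ (recovering $\hat\iota^\ast$), while $\id_{\NS}$ extends only by $\id_{\T}$. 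With that replacement your argument goes through and matches the paper's.
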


\begin{proof}
    Since the transcendental lattice $\T(\dcub{2}^{\epsilon})$ of the projective manifold $\dcub{2}^{\epsilon}$ has odd rank, any birational automorphism of $\dcub{2}^{\epsilon}$ is either symplectic or antisymplectic, i.e. acts as $\pm\id$ on $\T(\dcub{2}^{\epsilon})$. Moreover, any birational automorphism of $\dcub{2}^{\epsilon}$ restricts to an isometry in $\bO^+(\NS(\dcub{2}^{\epsilon}))$ (because they must preserve the positive cone in $\NS(\dcub{2}^{\epsilon})\otimes \mathbb{R}$). According to \cite[Remark 4.27]{brandhorst-hofmann}, we have that $\bO(\NS(\dcub{2}^{\epsilon})) = \bO^+(\NS(\dcub{2}^{\epsilon}))\times\{\pm \id\}$, where the first factor is generated by the three reflections $\tau_D$, $\tau_{(H+D)/2}$ and $\tau_{(H-D)/2}$ \footnote{see the notebook "NSgeneral" of \cite{database} for a computational proof of this statement}. Now, any birational automorphism of $\dcub{2}^{\epsilon}$ must preserve the fundamental exceptional chamber of $\dcub{2}^{\epsilon}$. To support the rest of the proof, we represent in \autoref{fig1} the previously mentioned isometries of $\NS(\dcub{2}^{\epsilon})$.  Since $\tau_{(H+D)/2}\tau_D = \tau_D\tau_{(H-D)/2}$, it follows that any element of $\bO^+(\NS(\dcub{2}^{\epsilon}))$ is of the form $\tau_D^i\alpha$ where $i=0,1$ and $\alpha$ is a finite word in $\tau_{(H+D)/2}$ and $\tau_{(H-D)/2}$. While $\tau_D$ preserves $\mathcal{FE}_{\dcub{2}^{\epsilon}}$, any nontrivial word of the latter form maps $H$ outside of $\mathcal{FE}_{\dcub{2}^{\epsilon}}$ (see \autoref{fig1}). Therefore, it follows that the action of any birational automorphism must coincide with $\tau_D$ or $\id$ on $\NS(\dcub{2}^{\epsilon})$. On the one hand, since $\tau_D$ is not stable, we can only extend it by negative identity on $\T(\dcub{2}^{\epsilon})$: the resulting isometry coincides with $\hat{\iota}^\ast$.  On the other hand, the identity on $\NS(\dcub{2}^{\epsilon})$ can only be extended by the identity on $\T(\dcub{2}^{\epsilon})$, giving rise to the identity of $\Homology^2(\dcub{2}^{\epsilon}, \mathbb{Z})$. By Torelli theorem, this implies that $\dcub{2}^{\epsilon}$ has no nontrivial symplectic birational automorphisms, and $\Bir(\dcub{2}^{\epsilon})$ has order 2 and it is generated by $\hat{\iota}$.
\end{proof}

\begin{rmk}
According to \cite[Table H3]{hassett2010intersection}, the linear system corresponding to the class \((H+D)/2\) determines a divisorial contraction \(\kappa\colon \dcub{2}^{\epsilon}\to B\). The associated contracted divisor $D'$ is represented by \((3H+D)/2\) in \(\NS(\dcub{2}^{\epsilon})\) (see \autoref{fig1}) and the image $\kappa(D')$ is a hyperk\"ahler fourfold $Y$ of type $\textnormal{K3}^{[2]}$. Moreover, there exists a dense open subset $Y_o\subseteq Y$ over which $\kappa$ defines a $\mathbb{P}^1$-bundle.
\end{rmk}

Finally, we conclude with a consequence of \autoref{lem ns general A gamma}, which has also been proved in \cite[Proposition 6.3]{rizzo} from a period point of view.

\begin{cor}\label{not_moduli_space}
    For a general \([A]\in\Gamma\), there exists no K3 surface $S$ such that \(\dcub{2}^{\epsilon}\) is birational a moduli space of twisted sheaves on $S$.
\end{cor}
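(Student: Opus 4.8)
The plan is to use the characterization of when a manifold of $\KTcube$--type is birational to a moduli space of (twisted) sheaves on a K3 surface in terms of the Hodge structure on $\Homology^2$, or equivalently in terms of the transcendental lattice together with its natural sublattice structure. The standard criterion (going back to the analysis of Addington and of Bayer--Macr\`i for $\textnormal{K3}^{[n]}$--type, and made fully effective for $\KTcube$ in the work referenced in the excerpt) states that $\dcub{2}^{\epsilon}$ is birational to a moduli space of twisted sheaves on some K3 surface $S$ if and only if there is a primitive embedding $\T(\dcub{2}^{\epsilon})\hookrightarrow \T(S)(-1)$ of the appropriate corank, equivalently a Hodge-isometric embedding into a lattice of the form $\bU^{\oplus 2}\oplus\bE_8^{\oplus 2}$ (respectively $\bU^{\oplus 3}\oplus\bE_8^{\oplus2}$ in the untwisted case) compatible with the transcendental Hodge structure; concretely, $\T(\dcub{2}^{\epsilon})$ must admit a primitive embedding into the K3 lattice $\bLambda_{\textnormal{K3}} = \bU^{\oplus3}\oplus\bE_8^{\oplus2}$ with orthogonal complement containing a vector whose square records the twist, and in the untwisted case $\T(\dcub{2}^{\epsilon})$ must embed primitively into $\bLambda_{\textnormal{K3}}$ with orthogonal complement of rank $1$ and the right divisibility.

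The key steps, in order, are as follows. First I would recall the explicit transcendental lattice computed in \autoref{lem ns general A gamma}, namely
\[
\T(\dcub{2}^{\epsilon})\simeq \bU^{\oplus2}\oplus \bA_2\oplus \bD_7\oplus \bE_8,
\]
which has rank $2+2+2+7+8 = 21$ and signature $(2,19)$. Second, I would invoke the lattice-theoretic criterion to reduce the statement to a purely arithmetic assertion: $\dcub{2}^{\epsilon}$ is birational to a moduli space of twisted sheaves on a K3 surface if and only if $\T(\dcub{2}^{\epsilon})$ admits a primitive embedding into $\bLambda_{\textnormal{K3}} = \bU^{\oplus3}\oplus\bE_8^{\oplus2}$, a lattice of rank $22$ and signature $(3,19)$, so that the orthogonal complement has rank $1$. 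Third, I would compute the discriminant form of $\T(\dcub{2}^{\epsilon})$ — which is the same as that of $\bA_2\oplus\bD_7$ up to sign conventions, so the $3$-part is $(\ZZ/3\ZZ)$ with the $\bA_2$-form and the $2$-part comes from $\bD_7$ — and then apply Nikulin's existence criterion for primitive embeddings into the unimodular lattice $\bLambda_{\textnormal{K3}}$: the orthogonal complement $K$ of such an embedding would be a rank-$1$ lattice $[d]$ with $D_K \cong D_{\T(\dcub{2}^{\epsilon})}(-1)$. But $D_{[d]}$ is cyclic, whereas $D_{\T(\dcub{2}^{\epsilon})}$ is not cyclic (its $2$-length alone is already $\geq 2$ because the $\bD_7$-discriminant form has length $\ge 2$; more precisely $D_{\bD_7}\cong \ZZ/4\ZZ$ has length $1$ but together with possible contributions one checks the total length $\ell(D_{\T(\dcub{2}^{\epsilon})})\ge 2$), giving the contradiction. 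Fourth, to rule out the twisted case, I would run the analogous comparison: a twisted moduli space interpretation requires a primitive embedding of $\T(\dcub{2}^{\epsilon})$ into $\bU^{\oplus3}\oplus\bE_8^{\oplus2}$ with orthogonal complement of rank $1$ as well (the Brauer class only changes which vector of the complement is distinguished, not the corank-$1$ requirement for $n=3$), so the same discriminant-length obstruction applies; alternatively one cites \cite[Proposition 6.3]{rizzo} which the excerpt flags as proving exactly this statement from the period viewpoint, and notes our argument gives an independent lattice-theoretic proof.

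I expect the main obstacle to be making the reduction to the arithmetic statement fully rigorous — that is, correctly stating and applying the Hodge-theoretic criterion for being birational to a (twisted) moduli space of sheaves in the $\KTcube$ setting, since the literature is most explicit for $\textnormal{K3}^{[2]}$. One must be careful that ``birational'' (rather than ``isomorphic'') is exactly what the transcendental-lattice criterion detects, via the Torelli theorem invoked repeatedly in this section, and that the relevant comparison lattice and corank are $\bU^{\oplus3}\oplus\bE_8^{\oplus2}$ and $1$ respectively. Once that is pinned down, the arithmetic is a short discriminant-form computation: the discriminant group of $\bU^{\oplus2}\oplus\bA_2\oplus\bD_7\oplus\bE_8$ is $D_{\bA_2}\oplus D_{\bD_7}$, which is not cyclic, while the discriminant group of any rank-$1$ lattice is cyclic, so no primitive embedding with rank-$1$ complement into a unimodular lattice can exist, and the conclusion follows.
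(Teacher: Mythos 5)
Your overall strategy---reduce to a lattice-theoretic embedding criterion and derive an arithmetic contradiction---is in the right spirit, but both halves of the execution have genuine gaps. First, the arithmetic obstruction you invoke is false: the discriminant group of $\T(\dcub{2}^{\epsilon})\simeq \bU^{\oplus2}\oplus\bA_2\oplus\bD_7\oplus\bE_8$ is $D_{\bA_2}\oplus D_{\bD_7}\cong \mathbb{Z}/3\mathbb{Z}\oplus\mathbb{Z}/4\mathbb{Z}\cong\mathbb{Z}/12\mathbb{Z}$, which \emph{is} cyclic (recall $D_{\bD_n}\cong\mathbb{Z}/4\mathbb{Z}$ for $n$ odd), so no contradiction with the cyclicity of $D_{[d]}$ arises. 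One could still rule out a primitive corank-one embedding into $\bU^{\oplus3}\oplus\bE_8^{\oplus2}$, but only by comparing the actual discriminant quadratic forms of the candidate complement $[12]$ and $-q_{\T(\dcub{2}^{\epsilon})}$, which is a different computation from the one you wrote. Second, and more seriously, your reduction in the twisted case is incorrect: if $\alpha$ is a Brauer class of order $k>1$ on $S$, a moduli space of $\alpha$-twisted sheaves has transcendental lattice $\T(S,\alpha)=\ker(\alpha|_{\T(S)})$, a proper finite-index sublattice of $\T(S)$; the corank-one primitive embedding into the K3 lattice is therefore \emph{not} preserved under twisting, and your obstruction (even if repaired) would not apply to the twisted case, which is the main content of the statement.

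The paper argues on the other side of the Mukai lattice. It takes the canonical primitive embedding $\bL_{\KTcube}\hookrightarrow \bU^{\oplus4}\oplus\bE_8^{\oplus2}$ with orthogonal complement $\langle v\rangle\simeq[4]$, computes the rank-three algebraic part $N$ (the saturation of $\NS(\dcub{2}^{\epsilon})\oplus\langle v\rangle$, an index-two overlattice), and uses the criterion of \cite[Proposition 3.1]{camere2019verra}: $\dcub{2}^{\epsilon}$ is birational to a moduli space of twisted sheaves on a K3 surface if and only if $N$ contains a copy of $\bU(k)$ for some $k\geq1$, equivalently an isotropic vector. It then checks via \cite[Chapter 4, Lemma 2.5]{cassels} that $N\otimes_{\mathbb{Z}}\mathbb{Q}_2$ is anisotropic. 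If you want to salvage your approach, you should switch to this criterion on the algebraic Mukai lattice (or to a correct transcendental-side formulation allowing finite-index overlattices), since the clean ``primitive embedding with rank-one complement'' statement only characterizes the untwisted case; citing \cite[Proposition 6.3]{rizzo} is of course legitimate but amounts to quoting the result rather than proving it.
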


\begin{proof}
Consider the unique primitive embedding of $\bL_{\KTcube}$ into the {\em Mukai lattice} $\Lambda_{24}\defeq  \bU^{\oplus4}\oplus \bE_8^{\oplus2}$, with orthogonal complement $\langle v\rangle\simeq [4]$ corresponding to a Mukai vector of square 4. 
The natural pure Hodge structure of weight 2 on \(\Lambda_{24}\) is so that the \((1,1)\)-part \(\Lambda_{24}^{1,1}\) is given by the primitive closure of $\NS(\dcub{2}^{\epsilon})\oplus \langle v\rangle$ in $\Lambda_{24}$. 
From the description of \autoref{lem ns general A gamma}, we deduce that it coincides with the lattice \[N \defeq  {\scriptscriptstyle{\begin{pmatrix}4 &2 &-2\\ 2 &-2& -1\\ -2 &-1 &2\end{pmatrix}}}=\left\langle H,(H+D)/2, (v-H)/2\right\rangle,\] which is an index \(2\) overlattice of $\NS(\dcub{2}^{\epsilon})\oplus \langle v\rangle$. 
It is known from \cite[Proposition 3.1]{camere2019verra} that $\dcub{2}^{\epsilon}$ is birational to moduli space of twisted sheaves on a K3 surface if and only if $N$ contains a copy of $\bU(k)$ for some $k\geq 1$.
However, according to \cite[Chapter 4, Lemma 2.5]{cassels} the quadratic space $N\otimes_\mathbb{Z}\mathbb{Q}_2$
does not contain any isotropic vector, hence $N$ cannot contain any rescaled copy of $\bU$.
Thus, we conclude that $\dcub{2}^{\epsilon}$ is not birational to a moduli space of twisted sheaves on any K3 surface.
\end{proof}

\section{Very symmetric EPW-cubes}\label{sec very sym}

In this section we exhibit examples of double EPW-cubes with a group of symplectic birational automorphisms whose action on cohomology has invariant sublattice of minimal rank. The groups we consider are canonically polarized (and appear in the classification \cite{Hohn2014FiniteGO}): we give a lattice-theoretical description of the action.\bigskip

In the following we present Lagrangian spaces \([A]\in\LG\) that are invariant under the action of a group $G$ from \autoref{group_actions}, and we study properties of the associated double EPW-cubes $\dcub{2}$. In particular, by the description of the automorphism group of $\dcub{2}$ (see \autoref{automorphi_are_linear} and \autoref{automorphisms_double_cover}), we obtain that $G$ acts symplectically on $\dcub{2}$.

For the case \(G=L_2(11)\), the Lagrangian space we consider is the Klein Lagrangian introduced in \cite{debarre2022gushel}.
For the remaining cases we use the following construction, which is inspired by \cite[Section 2]{billi2022double} where the case of the group \(\mathcal{A}_7\) was already treated.

 According to \cite{Wilson1985ATLASOF}, for \(G=\mathcal{A}_7,M_{10},L_3(4)\) there exists a group \(\widetilde{G}\) and a nontrivial element \(\eta\in\widetilde{G}\) such that $\widetilde{G}/\langle \eta\rangle\cong G$, and there exists an irreducible representation $\rho\colon \widetilde{G}\rightarrow \GL (V_6)$ whose center is generated by \(\eta\). Generators for the latter representation can be found in \cite{ATLAS} for the case of \(G=\mathcal{A}_7, L_3(4)\) and in \cite{Hohn2014FiniteGO} for the case \(G=M_{10}\). 
 Note that in all cases, \(\widetilde{G}\) has exactly two irreducible \(10\)-dimensional representations dual to each other. We denote them by \(W_{10}\) and \(W_{10}'\)
 
 By the description of the group \(\widetilde{G}\), there is an induced representation $\widetilde{G}\rightarrow\GL(\bigwedge\nolimits^3 V_6)$,
 which we denote $W$, which descends to a faithful projective representation \(G\rightarrow\PGL(\bigwedge^3 V_6)\). A direct computation with characters shows the following lemma.
 
\begin{lem} 
The representation $W$ decomposes as the direct sum \(W = (A_1,\rho_1) \oplus (A_2,\rho_2) \) of the two irreducible 10-dimensional representations $(A_1,\rho_1)\cong W_{10}$ and $(A_2,\rho_2)\cong W'_{10}$ of the group $\widetilde{G}$. 
Moreover the underlying vector spaces $A_1,A_2\subset\bigwedge\nolimits^3 V_6$ of those representations are Lagrangian. 
\end{lem}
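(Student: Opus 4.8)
The plan is to verify the decomposition on the level of characters, and then to show that each summand is isotropic (hence, by dimension count, Lagrangian) for the symplectic form on $\bigwedge^3 V_6$.

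First I would compute the character of $W$, the representation of $\widetilde{G}$ on $\bigwedge^3 V_6$ induced by $\rho\colon \widetilde{G}\to \GL(V_6)$. This is a purely mechanical application of the formula for the character of the third exterior power: for $g\in\widetilde{G}$ with eigenvalues giving power sums $p_k = \operatorname{tr}(\rho(g)^k)$, one has
\[
\chi_{\bigwedge^3 V_6}(g) = \frac{1}{6}\bigl(p_1^3 - 3p_1 p_2 + 2p_3\bigr).
\]
Evaluating this on a set of conjugacy class representatives of $\widetilde{G}$ (using the character tables in \cite{ATLAS,Wilson1985ATLASOF}), one obtains a $20$-dimensional character. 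Decomposing it against the irreducible characters of $\widetilde{G}$ — equivalently, checking that $\langle \chi_{\bigwedge^3 V_6}, \chi_{W_{10}}\rangle = \langle \chi_{\bigwedge^3 V_6}, \chi_{W'_{10}}\rangle = 1$ and that the sum of the two $10$-dimensional characters accounts for all $20$ dimensions — gives $W\cong W_{10}\oplus W'_{10}$. Since $\eta$ acts on $V_6$ as a scalar $\zeta$ (a primitive root of unity of order $|\langle\eta\rangle|$), it acts on $\bigwedge^3 V_6$ as $\zeta^3$; one checks this matches the central character of $W_{10}$ and $W'_{10}$, consistent with $W$ descending to a projective representation of $G$.

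Next, I would show that the two summands $A_1, A_2\subset \bigwedge^3 V_6$ are each isotropic for the symplectic form $\omega$ coming from $\bigwedge^6 V_6\cong \mathbb{C}$. The key observation is that $\omega$ is $\widetilde{G}$-equivariant up to the determinant character of $\rho$: for $g\in\widetilde{G}$, $\omega(\rho(g)u, \rho(g)v) = \det(\rho(g))\,\omega(u,v)$. Hence the bilinear form $\omega$ gives a $\widetilde{G}$-equivariant map $\bigwedge^3 V_6 \to (\bigwedge^3 V_6)^\vee\otimes(\det\rho)$, and its restriction $A_1\times A_1\to \mathbb{C}$ is a $\widetilde{G}$-invariant element of $(A_1^\vee\otimes A_1^\vee)\otimes(\det\rho)^{-1}$. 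Now $A_1\cong W_{10}$ is irreducible, and $A_1$ and $A_2\cong W'_{10}$ are dual to each other; so by Schur's lemma $\operatorname{Hom}_{\widetilde{G}}(A_1, A_1^\vee\otimes\det\rho) \ne 0$ only if $A_1^\vee\otimes\det\rho \cong A_1^\vee$ or $A_2^\vee$, i.e. only if $\det\rho$ is trivial on the relevant summand or $A_1\cong A_2\otimes(\det\rho)^{\pm 1}$. One checks from the character data that $\det\rho$ is the appropriate character making the pairing $A_1\times A_1\to\mathbb{C}$ vanish identically (the two $10$-dimensional irreducibles are genuinely non-isomorphic, so the only $\widetilde{G}$-invariant pairing on $\bigwedge^3 V_6$ is, up to scalar, the one pairing $A_1$ with $A_2$); consequently $\omega|_{A_1\times A_1} = 0$ and likewise $\omega|_{A_2\times A_2}=0$. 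Since $\dim A_1 = \dim A_2 = 10 = \tfrac12\dim\bigwedge^3 V_6$ and $\omega$ is nondegenerate, each $A_i$ is a maximal isotropic subspace, i.e. Lagrangian.

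I expect the main obstacle to be bookkeeping rather than conceptual: one must be careful about which of the two dual $10$-dimensional representations is $W_{10}$ and which is $W'_{10}$, and about the twist by $\det\rho$ — in particular confirming that $\det\rho$ restricted to $\widetilde{G}$ is such that $W_{10}^\vee\otimes\det\rho\not\cong W_{10}$, so that Schur forces $\omega|_{A_1\times A_1}=0$ rather than allowing a nondegenerate form there. This is where the explicit character tables of $\widetilde{G}$ for $G = \mathcal{A}_7, M_{10}, L_3(4)$ enter, and where the "direct computation with characters" referenced in the statement does the real work; the isotropy argument above then reduces the Lagrangian claim to a dimension count.
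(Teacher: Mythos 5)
Your proposal is correct and matches the paper's approach: the paper itself only says ``a direct computation with characters shows the following lemma,'' and your character computation of $\bigwedge^3 V_6$ together with the Schur's-lemma argument (the invariant pairing must pair the two non-isomorphic dual summands with each other, so each is isotropic, hence Lagrangian by dimension count) is exactly the intended content. Your care about the $\det\rho$ twist is a reasonable refinement rather than a deviation, since for these central extensions the representation lands in $\SL(V_6)$ and the twist is trivial.
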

Observe that by construction, the two representations are dual to each other, and hence \(\delta(A_1)=A_2\). 
From \autoref{two_to_one} we deduce that the two Lagrangians have the same associated double EPW-cubes. For this reason we let \(A\) be any of the two Lagrangian spaces \(A_1\) or \(A_2\).
\begin{thm}\label{symmetric examples}
    Let \([A]\in\LG\) be the \(G\)-invariant Lagrangian space as above for \(G\) in \autoref{group_actions} and consider the associated double EPW-cube \(\dcub{2}\). We have that \([A]\not\in\Sigma\). 
    Moreover, the following holds: \begin{enumerate}

\item If \(G=L_3(4)\) then \([A]\in\Gamma\), in particular \(\dcub{2}\) is singular and \(G\leq \Bir^s_H(\dcub{2}^\epsilon)\), where \(\dcub{2}^\epsilon\) is a symplectic resolution. 

    \item If \(G=\mathcal{A}_7,M_{10},L_2(11)\) then \([A]\not\in\Gamma\), in particular \(\dcub{2}\) is smooth  and we have \(\Aut^s_H(\dcub{2})\cong G\).

    \end{enumerate}
    Moreover,
    \[\Aut_H(\dcub{2})\cong\left\{\begin{array}{cl} G\times \mathbb{Z}/4\mathbb{Z}& \text{ if $G=L_2(11)$}\\ G\times \mathbb{Z}/2\mathbb{Z}&\text{ if $G = \mathcal{A}_7,\, M_{10}$.}\end{array}\right .\]
\end{thm}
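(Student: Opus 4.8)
The plan is to treat the statement in three layers: the location of $[A]$ in $\LG$, the identification of $\PGL(V_6)_A$, and the group structure of $\Aut_H(\dcub{2})$.

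\emph{Location of $[A]$.} Using the explicit generators of $\rho\colon\widetilde{G}\to\GL(V_6)$ and of the induced Lagrangian $A\subset\bigwedge\nolimits^3V_6$, I would first check by direct computation that $\mathbb{P}(A)$ meets $\Gr(3,V_6)$, equivalently that $A$ contains a decomposable trivector; this gives $[A]\notin\Sigma$, and \autoref{epws} then produces $\dcub{2}$ — and, when $[A]\in\Gamma$, its symplectic resolution $\dcub{2}^\epsilon$ — as a manifold of $\KTcube$--type. A second computation evaluates the degeneracy locus $\cub{4}=Z_A[4]$: I expect it to be empty for $G=\mathcal{A}_7,M_{10},L_2(11)$, so $[A]\notin\Gamma$ and $\dcub{2}$ is smooth, and to consist of a single point for $G=L_3(4)$, so $[A]\in\Gamma$, $\dcub{2}$ is singular and $\dcub{2}^\epsilon\to\dcub{2}$ is a small resolution as in \cite{rizzo}.

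\emph{Identifying $\PGL(V_6)_A$.} Since $A$ is $\widetilde{G}$-invariant, the faithful projective representation $G\to\PGL(V_6)$ lands in $\PGL(V_6)_A$; by \autoref{automorphi_are_linear}, and by the naturality of the double cover over $\Gamma\setminus\Sigma$ (\cite{debarre2020double}), $G$ acts on $\dcub{2}$, and symplectically since $G$ is perfect. For $G=L_3(4)$ this action lifts to a nonregular symplectic birational action on $\dcub{2}^\epsilon$ which is faithful on $\Homology^2$ because $\rho_X$ is injective for $\KTcube$--type, giving the embedding $G\hookrightarrow\Bir^s_H(\dcub{2}^\epsilon)$ of part (1). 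In the smooth cases I would then show $\PGL(V_6)_A=\Aut^s_H(\dcub{2})=G$: this group is a finite symplectic subgroup of $\bO^+(\bL_{\KTcube})$ containing $G$ and fixing $H$ of type $(4,2)$, and computing the $G$-character of $\Homology^2(\dcub{2},\mathbb{Q})$ gives that the invariant sublattice of $G$ has rank $3$; since $\NS$ then has rank $21$, both $G$ and $\PGL(V_6)_A$ have coinvariant sublattice of rank $20$. Applying \autoref{lattices_of_examples} to the stable saturation of $\PGL(V_6)_A$ — which has the same coinvariant, hence is still a $(4,2)$-polarized action of maximal coinvariant — places it in \autoref{table max pol}; inspecting that table (mostly by order considerations, together with $\mathcal{A}_7,M_{10}\not\leq L_3(4)$) one sees no entry properly contains the relevant $G$, forcing $\PGL(V_6)_A=G$. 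The transcendental lattices $\T(X)$ of \autoref{group_actions} then follow: for the three smooth cases they are the entries $108a$, $119a$, $120a$ of \autoref{table max pol} identified above, while for $L_3(4)$, where the action is nonregular and nonstable, $\T(X)$ is recovered by a separate lattice computation in the spirit of \cite{brandhorst-hashimoto,wawak2022very}.

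\emph{Structure of $\Aut_H(\dcub{2})$.} It remains to decide whether $A$ and $\delta(A)$ are $\PGL(V_6)$-equivalent. For $G=\mathcal{A}_7,M_{10}$ I expect them to lie in distinct $\PGL(V_6)$-orbits, so \autoref{automorphisms_double_cover} applies directly and yields $\Aut_H(\dcub{2})\cong G\times\langle\iota\rangle\cong G\times\mathbb{Z}/2\mathbb{Z}$ with symplectic part $G$. For the Klein Lagrangian $G=L_2(11)$ I expect instead $\delta([A])=[A]$, so that $\Aut(\Gr)_A=\PGL(V_6)_A\times\langle\delta\rangle$ and the central extension underlying \autoref{automorphisms_double_cover} becomes $1\to\langle\iota\rangle\to\Aut_H(\dcub{2})\to G\times\langle\delta\rangle\to1$; I would then check that $\delta$ centralizes $G$ and lifts to an order-four automorphism $\tau$ with $\tau^2=\iota$ — by evaluating the character of $\Aut_H(\dcub{2})$ on $\Homology^2(\dcub{2},\mathcal{O}_{\dcub{2}})\cong\mathbb{C}\sigma_{\dcub{2}}$ and finding that it sends $\tau$ to a primitive fourth root of unity — whence $\Aut_H(\dcub{2})\cong G\times\langle\tau\rangle\cong G\times\mathbb{Z}/4\mathbb{Z}$ with symplectic part $G$. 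The main obstacle I anticipate is precisely this identification $\PGL(V_6)_A=G$, i.e. excluding extra automorphisms: verifying the lattice hypotheses needed to invoke \autoref{lattices_of_examples} — stability, stable saturation, the rank of the invariant sublattice — and carrying out the group-theoretic maximality check inside \autoref{table max pol}, together with producing the order-four element $\tau$ for $L_2(11)$.
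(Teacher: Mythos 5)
Your overall architecture is the same as the paper's — establish where $[A]$ sits relative to $\Sigma$ and $\Gamma$, pin down $\Aut^s_H(\dcub{2})=G$ by maximality of $G$ among the rank-$20$ entries of \autoref{table max pol}, and obtain the order-$4$ factor for $L_2(11)$ from the duality involution $\delta$ — but several individual steps are resolved differently. For membership in $\Gamma$ you propose to compute $\cub{4}$ directly in all four cases; the paper only does this for $M_{10}$ (where two lattice actions are possible) and otherwise argues lattice-theoretically: $L_3(4)$ admits no regular symplectic action by \autoref{lattices_of_examples}, forcing $[A]\in\Gamma$, while for $\mathcal{A}_7$ and $L_2(11)$ the coinvariant lattice contains no class of type $(-12,2)$, so no $\mathbb{P}^3$-contraction and hence no point of $\cub{4}$ can exist. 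Your uniform computational route is viable in principle but harder where you must certify $\cub{4}\neq\emptyset$ (and note that for $L_3(4)$ there is no reason for $\cub{4}$ to be a single point rather than a finite $G$-orbit). One caution on $\Sigma$: the condition actually verified in \cite{billi2022double} is that $A$ contains \emph{no} decomposable trivector; the displayed definition in \autoref{distinguished_divisors} has the inequality reversed, and your phrasing inherits that, so the computation you describe would test the wrong condition.

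There are two places where your plan asserts the decisive fact without a workable method. First, for $G=\mathcal{A}_7,M_{10}$ you invoke \autoref{automorphisms_double_cover}, which requires knowing that $[A]$ and $[\delta(A)]$ lie in distinct $\PGL(V_6)$-orbits, and you give no way to verify this; the paper sidesteps the orbit question entirely by observing that $\T(X)$ (diagonal of determinant $420$, resp.\ $120$) admits only isometries of order at most $2$, so the purely nonsymplectic quotient of $\Aut_H(\dcub{2})$ has order $2$ and $\Aut_H(\dcub{2})=G\times\langle\iota\rangle$ follows directly — indeed the orbit condition is most naturally deduced \emph{from} this, not checked independently. Second, for $L_2(11)$ the crux is whether the lift of the extra element of $\Aut(\Gr)_A$ has order $2$ or order $4$ (here $\T(X)\simeq[22]\oplus[22]$ does admit an order-$4$ isometry, so the lattice does not decide), and your proposal to settle this by "evaluating the character on $\Homology^2(\dcub{2},\mathcal{O}_{\dcub{2}})$" is circular: the character value at $\tau$ is exactly what is unknown. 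The paper resolves this by an explicit construction: $\delta(A)=(\bigwedge^3\varphi)(A)$ for an involution $\varphi$ commuting with $G$ (so the fixing element is $(\bigwedge^3\varphi)\circ\delta$, not $\delta$ itself), and the induced map on $\mathcal{O}_{\cub{2}}(-4)$ carries a sign which forces the lift to act by $\pm i$ on $\mathcal{R}_2(-2)$, whence $\widetilde{\delta}^{\,2}=\iota$ and $\widetilde{\delta}$ is purely nonsymplectic of order $4$. Without an argument of this kind your proof cannot exclude $\Aut_H(\dcub{2})\cong G\times(\mathbb{Z}/2\mathbb{Z})^2$.
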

\begin{proof}
    First we prove that \([A]\not\in\Sigma\). The cases \(G=L_2(11)\) and \(G=\mathcal{A}_7\) were already treated in \cite{debarre2022gushel} and \cite{billi2022double} respectively. For the remaining cases we argue as in the proof of \cite[Proposition 2.2]{billi2022double} and perform the same computations using the code in \cite[Appendix A.2]{billi2022double} to get that \([A]\not\in\Sigma\). 
    Now we check when \([A]\in \Gamma\).
    \begin{enumerate}
        \item  If \([A]\not\in\Gamma\), then the manifold \(\dcub{2}\) is a smooth hyperkähler manifold and \(G\leq \Aut^s_H(\dcub{2})\). We deduce from \autoref{lattices_of_examples} that the group \(G=L_3(4)\) cannot have a regular symplectic action on a manifold of \(\KTcube\)--type and this implies that \([A]\in\Gamma\) for the \(L_3(4)\)-invariant Lagrangian space \(A\). The argument in the proof of \autoref{automorphisms_double_cover} shows that \(\PGL(V_6)_A\hookrightarrow \Aut_H(\dcub{2})\) even when \([A]\in\Gamma\setminus\Sigma\), from which follows that in this case we have an embedding \(G\hookrightarrow \Bir^s_H(\dcub{2}^\epsilon)\). 

    \item For the other groups, if \([A]\in\Gamma\), then \(\dcub{2}\) is singular and it admits a symplectic resolution \(\dcub{2}^\epsilon\) which is a projective hyperk\"ahler manifold of $\KTcube$--type (\autoref{epws}). Such a resolution is given by some \(\mathbb{P}^3\)--contractions. By \cite{hassett2010intersection}, to each contraction corresponds a divisor $D$ of numerical type $(-12,2)$ which is orthogonal to the class \(H\) given by the pullback of the polarization of \(\dcub{2}\). For the groups \(G = \mathcal{A}_7, L_2(11)\), there is a canonical \(G\)-invariant EPW-cube polarization, and its orthogonal in the algebraic part is exactly \((\bL_{\KTcube})_G\), which contains no vectors of type \((-12,2)\). Hence, for those groups, we can apply \autoref{lattices_of_examples} and deduce that \([A]\not\in\Gamma\).
We are left to deal with the case \(G=M_{10}\), for which we have two possible actions, one of which cannot be regular. We check that \(\cub{4}=\emptyset\) by a local computation in each affine chart, similarly to \cite[Proposition 2.2]{billi2022double} and using the local description of \cite[Lemma 2.7]{iliev2019epw}\footnote{we include the script used for such computations in \cite{database}}. This implies that \(\dcub{2}\) is smooth and \([A]\not\in\Gamma\) is \(G\)-invariant. Then by \autoref{automorphi_are_linear} and \autoref{automorphisms_double_cover} we get an inclusion \(G\leq\Aut^s_H(\dcub{2})\).  We obtain an isomorphism \(G\cong\Aut^s_H(\dcub{2})\) from the fact that the groups in \autoref{group_actions} are 
maximal for the inclusion (see \autoref{tab: lovely table 2}). 
\end{enumerate}
For the last part of the statement, it is immediate to see that for \(G=A_7, M_{10}\) the transcendental lattice admits only isometries of order at most 2 and hence \(\Aut_H(\dcub{2})\cong G\times \langle\iota\rangle\).

We now deal with the case \(G=L_2(11)\). From the description of \(A\) in \cite{debarre2022gushel}, there exists an involution \(\varphi\in\PGL(V_6)\) of determinant \(-1\) such that \(\delta(A)=f(A)\) where \(f:=\bigwedge^3 \varphi\). We also notice that by construction, \(\varphi\) commutes with all the elements of \(G\). 
According to \cite{debarre2020double}, the \(\mathcal{O}_{\cub{2}}\)-algebra structure on the sheaf \(\mathcal{O}_{\cub{2}}\oplus \mathcal{R}_2(-2)\) is given by the isomorphism \(\mathcal{R}_2(-2)\otimes \mathcal{R}_2(-2)\to \mathcal{O}_{\cub{2}}\) induced by multiplication for a section of \(\mathcal{O}_{\cub{2}}(4)\) and the isomorphism 
\[\det\left(V_6\wedge \bigwedge^2 \mathcal{U}\right)\cong\mathcal{O}_{\Gr}(-4),\] 
where \(\mathcal{U}\) denotes the tautological bundle of \(\Gr\) and 
\[V_6\wedge \bigwedge^2 \mathcal{U}=\Img\left(V_6\otimes \bigwedge^2\mathcal{U}\to \bigwedge^3 V_6\otimes\mathcal{O}_{\Gr}\right).\]
As a consequence, we have that the map \(g^*\colon \mathcal{O}_{\cub{2}}(-4)\to \mathcal{O}_{\cub{2}}(-4)\) is given by pullback by \(\delta\) composed with multiplication by \((-1)\) on the fibers. From this we see that composing the action induced by \(g\) with multiplication by \(i\) on \(\mathcal{R}_2(-2)\) gives an automorphism of the $\mathcal{O}_{\cub{2}}$-algebra $\mathcal{O}_{\cub{2}}\oplus \mathcal{R}_2(-2)$, this induces an automorphism $\widetilde{\delta}\in \Aut_H(\dcub{2})$ whose square coincides with $\iota$, the covering involution. Since $\iota$ is nonsymplectic, we have that $\widetilde{\delta}$ is purely nonsymplectic of order 4, and we have a split extension
    \[1\to\PGL(V_6)_A \to \Aut_H(\dcub{2})\to \langle \widetilde{\delta}\rangle\to 1.\]
    Note that the previous extension gives rise to a decomposition of \(\Aut_H(\dcub{2})\) as a direct product of \(\PGL(V_6)_A\) and \(\langle \widetilde{\delta}\rangle\) if and only if $(\bigwedge^3\varphi)\circ\delta$ is central in $\PGL(V_6)_A$. The latter is equivalent to $\varphi$ commuting with every element in $\PGL(V_6)_A$, this is the case and hence we can conclude that \(\Aut_H(\dcub{2})\cong G\times \langle\widetilde{\delta}\rangle\) \footnote{see the notebook "ExtraAut" of \cite{database} for a computational lattice-theoretic evidence of the existence of such a purely nonsymplectic automorphism}.

\end{proof}

\begin{rmk}
    In \cite[Theorem 0.3, Figure 3]{muller} the second named author showed the existence of a hyperk\"ahler manifold $X$ of deformation type $\KTcube$ and of Picard rank 1, equipped with a nonstable purely nonsymplectic automorphism $g$ of order 4 acting trivially on $\NS(X)$. In such a situation, it is proved that $\NS(X)\simeq [4]$ and $\T(X)\simeq \bU^{\oplus 2}\oplus \bE_8^{\oplus 2}\oplus[-2]^{\oplus2}$. Note that the previous numerical data determine a 9-dimensional family of manifolds of K3\(^{[3]}\)-type
    whose general element is equipped with a purely nonsymplectic automorphism of order 4. The $L_2(11)$-invariant Lagrangian from \autoref{symmetric examples} gives an explicit geometric realization for this existence statement, where the order 4 non-symplectic automorphism squares to the associated covering involution.. 
\end{rmk}

\appendix
\section{Table of stably saturated stable symplectic finite subgroups for manifolds of \texorpdfstring{\(\KTcube\)}{K3[3]}--type}\label{sec appendix}
The lattice-theoretic counterpart of \autoref{class_symplectic_groups} is presented in \autoref{tab: lovely table 2}. Each entry corresponds to an $\bO^+(\bL_{\KTcube})$-conjugacy class $\mathcal{G}$ of saturated finite subgroups of $\widetilde{O}(\bL_{\KTcube})$. For each such conjugacy class $\mathcal{G}$, we denote by $G\leq \widetilde{O}(\bL_{\KTcube})$ a representative and by $C := (\bL_{\KTcube})_G$ the associated coinvariant sublattice. Then the corresponding entry in the table gives:
\begin{itemize}
    \item the label of the associated lattice $C$ from H\"ohn--Mason database \cite[Table 2]{HMLeech}. In case of several $\bO(\bL_{\KTcube})$-orbits of primitive sublattices (see \autoref{class_symplectic_groups}), we add letters to distinguish each orbit;
    \item a description of the group $O^\#(C)\cong G$, or its Id in the Small Group Library \cite{SGL}, or its order;
    \item the genus of the invariant sublattice $\bL_{\textnormal{K3}^{[3]}}^G$ following Conway--Sloane convention \cite[Chapter 15]{splg}.
\end{itemize} 
Moreover, in each case, we also give the number of classes of numerical type \((-12,2)\) and type \((-36,4)\) in $C$ (\autoref{easy divisibility}).

{
\small\centering\setlength{\tabcolsep}{1pt}
\renewcommand\arraystretch{1.5}
\rowcolors{1}{lightgray!40!white}{white}
\begin{longtable}{cccc|cccc}

\rowcolor{white}\caption{Stably saturated stable symplectic finite subgroups for the deformation type $\textnormal{K3}^{[3]}$}
\label{tab: lovely table 2}\vspace*{0.3cm}\\
 Id& $O^\#(C)$& $\#(-12, -36)$& $g(\bL_{\textnormal{K3}^{[3]}}^G)$&Id& $ O^\#(C)$& $\#(-12, -36)$& $g(\bL_{\textnormal{K3}^{[3]}}^G)$\\
\hline
\endfirsthead

\rowcolor{white}\caption[]{Stably saturated symplectic finite stable groups for the deformation type $\textnormal{K3}^{[3]}$ (continued)}\vspace*{0.3cm}\\
 Id& $O^\#(C)$& $\#(-12, -36)$& $g(\bL_{\textnormal{K3}^{[3]}}^G)$&Id& $ O^\#(C)$& $\#(-12, -36)$& $g(\bL_{\textnormal{K3}^{[3]}}^G)$\\
\hline
\endhead

1 & $C_1$ & $(0, 0)$& $ \II_{(3, 20)}4^{1}_{7} $ & 71b & $[384, 20164]$ & $(312, 64)$&  $ \II_{(3, 1)}2^{-2}4^{-2}_{6} $  \\

2 & $C_2$&$(0, 0)$& $ \II_{(3, 12)}2^{-8}4^{-1}_{3} $ & \multirow{1}{*}{72a} & \multirow{1}{*}{${A}_6$} &$(0, 0)$  &  $ \II_{(3, 1)}4^{-2}_{2}3^{2}5^{1} $  \\                                                                                   
\multirow{1}{*}{3a} &\multirow{1}{*}{$C_2^2$} &$(0, 0)$ & $ \II_{(3, 8)}2^{-6}4^{-3}_{7} $ &   \multirow{1}{*}{72b} & \multirow{1}{*}{${A}_6$}&$(380, 0)$ &  $ \II_{(3, 1)}2^{-2}_{6}3^{2}5^{1} $  \\                                                                                    
 \multirow{1}{*}{3b} &\multirow{1}{*}{$C_2^2$}&$(64, 0)$& $ \II_{(3, 8)}2^{-4}4^{-3}_{7} $ &   \multirow{1}{*}{73a} & \multirow{1}{*}{${A}_{4,4}$}&$(0, 0)$  &  $ \II_{(3, 1)}2^{2}4^{1}_{7}8^{1}_{7}3^{2} $  \\                                                                            
4 &$C_3$&$(0, 0)$ & $ \II_{(3, 8)}4^{1}_{7}3^{6} $ &  \multirow{1}{*}{73b} & \multirow{1}{*}{${A}_{4,4}$}&$(304, 0)$ &  $ \II_{(3, 1)}4^{1}_{7}8^{1}_{7}3^{2} $  \\                                                                                   
5 &$C_2$ &$(32, 0)$ & $ \II_{(3, 8)}2^{10}_{4}4^{1}_{7} $  & \multirow{1}{*}{74a} &\multirow{1}{*}{$H_{192}$} &$(0, 0)$ &  $ \II_{(3, 1)}4^{-3}_{5}8^{1}_{7}3^{1} $ \\                                                                             
\multirow{1}{*}{6a} & \multirow{1}{*}{$C_2^3$}&$(0, 0)$ & $ \II_{(3, 6)}2^{6}4^{3}_{5} $ &   \multirow{1}{*}{74b} &\multirow{1}{*}{$H_{192}$}&$(286, 0)$ &  $ \II_{(3, 1)}2^{2}_{2}4^{1}_{1}8^{-1}_{5}3^{1} $  \\                                                                         
\multirow{1}{*}{6b} & \multirow{1}{*}{$C_2^3$}&$(96, 0)$ & $ \II_{(3, 6)}2^{-4}4^{-3}_{1} $ &   \multirow{1}{*}{75a} & \multirow{1}{*}{$[192, 1538]$}&$(280, 0)$ &  $ \II_{(3, 1)}2^{2}4^{-1}_{3}8^{1}_{1}3^{1} $ \\                                                                           
 \multirow{1}{*}{6c} & \multirow{1}{*}{$C_2^3$}&$(112, 0)$& $ \II_{(3, 6)}2^{-8}_{6}4^{-1}_{3} $ &   \multirow{1}{*}{75b} & \multirow{1}{*}{$[192, 1538]$}&$(240, 0)$&  $ \II_{(3, 1)}2^{2}4^{-1}_{3}8^{1}_{1}3^{1} $ \\                                                                       
7 & ${S}_3$&$(0, 0)$ & $ \II_{(3, 6)}2^{2}4^{-1}_{3}3^{-5} $ &\multirow{1}{*}{76a} & \multirow{1}{*}{$T_{192}$}&$(0, 0)$ &  $ \II_{(3, 1)}4^{-4}_{0}3^{-1} $  \\                                                                                   
8 & $C_2^2$ &$(48, 0)$ & $ \II_{(3, 6)}2^{6}4^{3}_{5} $ &   \multirow{1}{*}{76b} & \multirow{1}{*}{$T_{192}$}&$(370, 0)$ &  $ \II_{(3, 1)}2^{-2}_{4}4^{2}_{0}3^{-1} $  \\                                                                                 
\multirow{1}{*}{9a} & \multirow{1}{*}{$C_4$}&$(0, 0)$ & $ \II_{(3, 6)}2^{-2}_{2}4^{5}_{7} $ &   \multirow{1}{*}{76c} & \multirow{1}{*}{$T_{192}$}&$(360, 0)$&  $ \II_{(3, 1)}2^{2}4^{-2}3^{-1} $  \\                                                                                    
\multirow{1}{*}{9b} & \multirow{1}{*}{$C_4$}&$(104, 0)$  & $ \II_{(3, 6)}4^{-5}_{5} $ &   \multirow{1}{*}{76d} & \multirow{1}{*}{$T_{192}$}&$(360, 64)$ &  $ \II_{(3, 1)}4^{-2}3^{-1} $  \\                                                                                                  
\multirow{1}{*}{10a} &\multirow{1}{*}{$C_2^4$} &$(0, 0)$ & $ \II_{(3, 5)}2^{6}4^{1}_{7}8^{1}_{7} $ &   \multirow{1}{*}{77a} & \multirow{1}{*}{$L_2(7)$}&$(0, 0)$&  $ \II_{(3, 1)}4^{2}_{6}7^{2} $  \\                                                                                  
\multirow{1}{*}{10b} &\multirow{1}{*}{$C_2^4$}&$(160, 0)$ & $ \II_{(3, 5)}2^{-4}4^{-1}_{3}8^{1}_{7} $ & \multirow{1}{*}{77b} & \multirow{1}{*}{$L_2(7)$}&$(364, 0)$ &  $ \II_{(3, 1)}2^{2}_{6}7^{2} $ \\                                                                                
11 & $C_2^4$&$(112, 0)$ & $ \II_{(3, 5)}2^{6}4^{2}_{6} $  &\multirow{1}{*}{79a} & \multirow{1}{*}{$[128, 1758]$} &$(192, 0)$ &  $ \II_{(3, 1)}4^{-3}_{3}8^{-1}_{3} $  \\                                                                                     
12 & $C_2^3$&$(80, 0)$ & $ \II_{(3, 5)}2^{6}4^{-1}_{5}8^{-1}_{5} $ &   \multirow{1}{*}{79b} & \multirow{1}{*}{$[128, 1758]$} &$(262, 0)$  &  $ \II_{(3, 1)}4^{-3}_{3}8^{-1}_{3} $  \\                                                                          
\multirow{1}{*}{13a} & \multirow{1}{*}{$D_4$}&$(0, 0)$ & $ \II_{(3, 5)}4^{6}_{6} $  & \multirow{1}{*}{81a} & \multirow{1}{*}{$[128, 1755]$}&$(224, 0)$  &  $ \II_{(3, 1)}4^{3}_{1}8^{1}_{1} $  \\                                                                                            
\multirow{1}{*}{13b} & \multirow{1}{*}{$D_4$}&$(124, 0)$ & $ \II_{(3, 5)}2^{2}_{0}4^{4}_{6} $ &   \multirow{1}{*}{81b} & \multirow{1}{*}{$[128, 1755]$}&$(260, 0)$ &  $ \II_{(3, 1)}4^{-3}_{3}8^{-1}_{3} $ \\                                                                                 
14 & $\#512$&$(256, 0)$ & $ \II_{(3, 4)}2^{-6}4^{-1}_{3} $  &82 & ${S}_5$&$(0, 0)$ &  $ \II_{(3, 1)}4^{-2}_{4}3^{-1}5^{-2} $ \\                                                                                 
15 & ${A}_{3,3}$&$(0, 0)$ & $ \II_{(3, 4)}4^{1}_{7}3^{4}9^{1} $  & 83 & $C_2^2\times{S}_4$&$(208, 0)$  &  $ \II_{(3, 1)}2^{2}4^{2}_{6}3^{2} $  \\                                                                                 
\multirow{1}{*}{16a} &\multirow{1}{*}{$C_2^4$} &$(128, 0)$& $ \II_{(3, 4)}2^{4}4^{3}_{7} $ &  \multirow{1}{*}{84a} & \multirow{1}{*}{$M_9$}&$(0, 0)$ &  $ \II_{(3, 1)}2^{3}_{7}4^{-1}_{5}3^{1}9^{1} $  \\                                                                            
 \multirow{1}{*}{16b} &\multirow{1}{*}{$C_2^4$}&$(144, 0)$  & $ \II_{(3, 4)}2^{4}4^{3}_{7} $  &   \multirow{1}{*}{84b} & \multirow{1}{*}{$M_9$}&$(344, 0)$&  $ \II_{(3, 1)}2^{1}_{7}4^{-1}_{5}3^{1}9^{1} $ \\                                                                            
\multirow{1}{*}{17a} & \multirow{1}{*}{$\Gamma_2a_1$}&$(0, 0)$ & $ \II_{(3, 4)}2^{2}4^{5}_{7} $  & 85& $N_{72}$&$(0, 0)$ &  $ \II_{(3, 1)}4^{2}_{6}3^{2}9^{1} $  \\                                                                                      
 \multirow{1}{*}{17b} & \multirow{1}{*}{$\Gamma_2a_1$}&$(144, 0)$  & $ \II_{(3, 4)}4^{5}_{7} $  & 86 & $D_4^2$&$(188, 32)$ &  $ \II_{(3, 1)}4^{4}_{2} $ \\                                                                                                     
 \multirow{1}{*}{17c} & \multirow{1}{*}{$\Gamma_2a_1$}&$(128, 0)$ & $ \II_{(3, 4)}4^{5}_{7} $  & 87 & $T_{48}$&$(0, 0)$ &  $ \II_{(3, 1)}2^{1}_{7}4^{1}_{1}8^{-2}3^{1} $  \\                                                                                 
 \multirow{1}{*}{17d} & \multirow{1}{*}{$\Gamma_2a_1$}&$(148, 0)$ & $ \II_{(3, 4)}2^{4}_{2}4^{3}_{5} $ &  88 & $C_2\times{S}_4$&$(180, 0)$ &  $ \II_{(3, 1)}4^{-4}_{0}3^{-1} $  \\                                                                                     
18 & $D_6$ &$(0, 0)$ & $ \II_{(3, 4)}2^{4}4^{1}_{7}3^{4} $  & \multirow{1}{*}{89a} & \multirow{1}{*}{$[32,43]$}&$(208, 0)$  &  $ \II_{(3, 1)}2^{-1}_{3}4^{-1}_{3}8^{2} $ \\                                                                           
\multirow{1}{*}{19a} & \multirow{1}{*}{${A}_4$} &$(0, 0)$& $ \II_{(3, 4)}2^{-2}4^{-3}_{7}3^{2} $  &  \multirow{1}{*}{89b} & \multirow{1}{*}{$[32,43]$}&$(244, 0)$ &  $ \II_{(3, 1)}2^{-1}_{3}4^{1}_{7}8^{-2} $  \\                                                                         
\multirow{1}{*}{19b} & \multirow{1}{*}{${A}_4$}&$(172, 0)$ & $ \II_{(3, 4)}4^{3}_{3}3^{2} $  & 90 & ${S}_4$&$(188, 0)$ &  $ \II_{(3, 1)}2^{-2}4^{-1}_{3}8^{1}_{7}3^{2} $  \\                                                                           
20 & $D_5$ &$(0, 0)$ & $ \II_{(3, 4)}4^{1}_{7}5^{4} $  & 91 & ${S}_4$&$(156, 24)$&  $ \II_{(3, 1)}4^{4}_{2} $ \\                                                                                                
21 & $D_4$&$(72, 0)$ & $ \II_{(3, 4)}2^{-2}4^{-5}_{3} $  & 92 &${S}_4$ &$(270, 0)$ &  $ \II_{(3, 1)}2^{-2}_{6}4^{-1}_{5}8^{-1}_{5}3^{-1} $\\                                                                   
23 & $C_2^2$ &$(72, 0)$ & $ \II_{(3, 4)}2^{-2}_{6}4^{-5}_{5} $ & 94 & $C_2\times Q_8$&$(316, 0)$&  $ \II_{(3, 1)}2^{-2}_{4}4^{1}_{7}16^{-1}_{3} $  \\                                                                     
\multirow{1}{*}{24a} & \multirow{1}{*}{$C_2^2\rtimes {A}_4$}&$(0, 0)$ & $ \II_{(3, 3)}2^{4}4^{-1}_{3}8^{1}_{7}3^{1} $  & 95 &$C_2\times D_4$ &$(196, 8)$&  $ \II_{(3, 1)}2^{-2}_{4}8^{-2}_{6} $  \\                                                                      
 \multirow{1}{*}{24b} & \multirow{1}{*}{$C_2^2\rtimes {A}_4$}&$(232, 0)$ & $ \II_{(3, 3)}2^{2}4^{1}_{1}8^{-1}_{5}3^{1} $  & 97& $D_6$ &$(230, 0)$ &  $ \II_{(3, 1)}2^{2}_{2}4^{2}_{0}3^{-2} $  \\                                                                  
\multirow{1}{*}{25a} & \multirow{1}{*}{$C_2^5$}&$(208, 0)$ & $ \II_{(3, 3)}2^{4}4^{1}_{1}8^{1}_{7} $  & 99 & $\#245760$&$(640, 0)$ &  $ \II_{(3, 0)}4^{3}_{3} $  \\                                                                                       
\multirow{1}{*}{25b} & \multirow{1}{*}{$C_2^5$}&$(192, 0)$& $ \II_{(3, 3)}2^{4}4^{1}_{1}8^{1}_{7} $  & 100 & $\#30720$&$(576, 0)$&  $ \II_{(3, 0)}2^{2}4^{-1}_{3}5^{-1} $  \\                                                                          
\multirow{1}{*}{26a} & \multirow{1}{*}{$\Gamma_4a_1$}&$(0, 0)$ & $ \II_{(3, 3)}2^{2}4^{3}_{1}8^{1}_{7} $  & \multirow{1}{*}{102a} & \multirow{1}{*}{$L_3(4)$}&$(0, 0)$  &  $ \II_{(3, 0)}2^{2}4^{-1}_{3}3^{1}7^{1} $  \\                                                                      
 \multirow{1}{*}{26b} & \multirow{1}{*}{$\Gamma_4a_1$}&$(208, 0)$&  $ \II_{(3, 3)}4^{3}_{1}8^{1}_{7} $ &\multirow{1}{*}{102b} & \multirow{1}{*}{$L_3(4)$}&$(0, 0)$  &  $ \II_{(3, 0)}2^{-2}4^{1}_{7}3^{1}7^{1} $  \\                                                                           
 \multirow{1}{*}{26c} & \multirow{1}{*}{$\Gamma_4a_1$}&$(214, 0)$  & $ \II_{(3, 3)}2^{-4}_{2}4^{1}_{1}8^{-1}_{5} $  &  \multirow{1}{*}{102c} & \multirow{1}{*}{$L_3(4)$}&$(560, 0)$ &  $ \II_{(3, 0)}4^{-1}_{3}3^{1}7^{1} $ \\                                                                     
\multirow{1}{*}{27a} & \multirow{1}{*}{$C_2\times \Gamma_2a_1$}&$(144, 0)$ & $ \II_{(3, 3)}2^{2}4^{4}_{0} $  &  \multirow{1}{*}{102d} & \multirow{1}{*}{$L_3(4)$}&$(560, 0)$ &  $ \II_{(3, 0)}4^{-1}_{3}3^{1}7^{1} $  \\                                                                                    
 \multirow{1}{*}{27b} & \multirow{1}{*}{$C_2\times \Gamma_2a_1$}&$(168, 0)$& $ \II_{(3, 3)}2^{-2}4^{-4}_{4} $  & 103 & $\#12288$&$(480, 0)$  &  $ \II_{(3, 0)}4^{-2}_{4}8^{-1}_{3} $  \\                                                                                  
\multirow{1}{*}{28a} & \multirow{1}{*}{$2_+^{1+4}$}&$(0, 0)$  & $ \II_{(3, 3)}4^{6}_{0} $  & 104 & $\#9216$ &$(448, 0)$ &  $ \II_{(3, 0)}2^{2}4^{1}_{7}3^{2} $  \\                                                                                          
 \multirow{1}{*}{28b} & \multirow{1}{*}{$2_+^{1+4}$}&$(172, 0)$ & $ \II_{(3, 3)}2^{-2}_{4}4^{-4}_{0} $  & 105 & $\#6144$&$(416, 0)$  &  $ \II_{(3, 0)}4^{-3}_{5}3^{1} $  \\                                                                                   
  \multirow{1}{*}{28c} & \multirow{1}{*}{$2_+^{1+4}$}&$(168, 0)$ & $ \II_{(3, 3)}2^{2}4^{4} $  & \multirow{1}{*}{106a} & \multirow{1}{*}{$C_2^4\rtimes A_6$}&$(0, 0)$ &  $ \II_{(3, 0)}4^{2}_{0}8^{-1}_{5}3^{-1} $  \\                                                                                   
  \multirow{1}{*}{28d} & \multirow{1}{*}{$2_+^{1+4}$}&$(168, 16)$ & $ \II_{(3, 3)}4^{4} $  &  \multirow{1}{*}{106b} & \multirow{1}{*}{$C_2^4\rtimes A_6$}&$(520, 0)$ &  $ \II_{(3, 0)}2^{2}8^{-1}_{5}3^{-1} $  \\                                                                                            
\multirow{1}{*}{29a} & \multirow{1}{*}{${S}_4$}&$(0, 0)$ & $ \II_{(3, 3)}4^{4}_{4}3^{2} $  &  \multirow{1}{*}{106c} & \multirow{1}{*}{$C_2^4\rtimes A_6$}&$(530, 0)$  &  $ \II_{(3, 0)}2^{2}_{2}8^{-1}_{3}3^{-1} $  \\                                                                               
 \multirow{1}{*}{29b} & \multirow{1}{*}{${S}_4$}&$(196, 0)$ & $ \II_{(3, 3)}2^{-2}_{4}4^{-2}_{4}3^{2} $  &  \multirow{1}{*}{106d} & \multirow{1}{*}{$C_2^4\rtimes A_6$}&$(520, 80)$  &  $ \II_{(3, 0)}8^{-1}_{5}3^{-1} $  \\                                                                             
30 & $D_4$&$(104, 0)$  & $ \II_{(3, 3)}2^{2}_{0}4^{3}_{1}8^{1}_{7} $  & \multirow{1}{*}{108a} & \multirow{1}{*}{${A}_7$} &$(0, 0)$ &  $ \II_{(3, 0)}4^{1}_{7}3^{-1}5^{1}7^{-1} $  \\                                                                 
\multirow{1}{*}{31a} & \multirow{1}{*}{$Q_8$}&$(0, 0)$ & $ \II_{(3, 3)}2^{-3}_{1}4^{1}_{7}8^{-2} $  &   \multirow{1}{*}{108b} & \multirow{1}{*}{${A}_7$}&$(0, 0)$ &  $ \II_{(3, 0)}4^{1}_{7}3^{-1}5^{1}7^{-1} $  \\                                                                   
 \multirow{1}{*}{31b} & \multirow{1}{*}{$Q_8$}&$(200, 0)$  & $ \II_{(3, 3)}2^{1}_{7}4^{-1}_{5}8^{-2} $ &  \multirow{1}{*}{108c} & \multirow{1}{*}{${A}_7$}&$(0, 0)$ &  $ \II_{(3, 0)}4^{1}_{7}3^{-1}5^{1}7^{-1} $ \\                                                                   
32 & $C_2^3$ &$(84, 0)$ & $ \II_{(3, 3)}4^{6}_{0} $ &  \multirow{1}{*}{110a} &\multirow{1}{*}{$[1920, 240993]$} &$(0, 0)$ &  $ \II_{(3, 0)}4^{-2}_{2}8^{1}_{1}5^{-1} $  \\                                                                                    
33 & $\#1536$&$(352, 0)$ & $ \II_{(3, 2)}2^{4}4^{-1}_{3}3^{1} $ &   \multirow{1}{*}{110b} &\multirow{1}{*}{$[1920, 240993]$} &$(0, 0)$  &  $ \II_{(3, 0)}4^{-2}_{2}8^{1}_{1}5^{-1} $  \\                                                                         
34 & $\#1024$&$(320, 0)$ & $ \II_{(3, 2)}2^{2}4^{3}_{1} $ &  \multirow{1}{*}{110c} &\multirow{1}{*}{$[1920, 240993]$} &$(0, 0)$ &  $ \II_{(3, 0)}4^{-2}_{2}8^{1}_{1}5^{-1} $  \\                                                                               
\multirow{1}{*}{36a} &\multirow{1}{*}{$[192, 1541]$} &$(304, 0)$& $ \II_{(3, 2)}2^{2}4^{3}_{1} $ & \multirow{1}{*}{110d} &\multirow{1}{*}{$[1920, 240993]$}   &$(430, 0)$ &  $ \II_{(3, 0)}2^{-2}_{6}8^{1}_{1}5^{-1} $  \\                                                                               
 \multirow{1}{*}{36b} &\multirow{1}{*}{$[192, 1541]$}&$(288, 64)$  & $ \II_{(3, 2)}2^{4}4^{1}_{1} $ & \multirow{1}{*}{110e} &\multirow{1}{*}{$[1920, 240993]$}  &$(430, 0)$ &  $ \II_{(3, 0)}2^{-2}_{6}8^{1}_{1}5^{-1} $  \\                                                                               
\multirow{1}{*}{37a} & \multirow{1}{*}{$C_4^2\rtimes {A}_4$}&$(0, 0)$ & $ \II_{(3, 2)}2^{2}4^{-1}_{3}8^{-2}_{2} $  & \multirow{1}{*}{111a} &\multirow{1}{*}{$[1344, 11686]$} &$(0, 0)$ &  $ \II_{(3, 0)}4^{3}_{5}7^{-1} $ \\                                                                              
 \multirow{1}{*}{37b} & \multirow{1}{*}{$C_4^2\rtimes {A}_4$}&$(304, 0)$ & $ \II_{(3, 2)}4^{1}_{1}8^{2}_{0} $ &   \multirow{1}{*}{111b} &\multirow{1}{*}{$[1344, 11686]$}&$(490, 0)$ &  $ \II_{(3, 0)}2^{-2}_{6}4^{-1}_{3}7^{-1} $  \\                                                                          
\multirow{1}{*}{38a} & \multirow{1}{*}{$C_2^2\rtimes{S}_4$}&$(0, 0)$ & $ \II_{(3, 2)}2^{2}4^{-2}_{4}8^{1}_{7}3^{1} $  & 112 & $[1152, 155478]$&$(0, 0)$&  $ \II_{(3, 0)}4^{1}_{7}8^{-2}_{2}3^{1} $ \\                                                                 
 \multirow{1}{*}{38b} & \multirow{1}{*}{$C_2^2\rtimes{S}_4$}&$(256, 0)$  & $ \II_{(3, 2)}4^{-2}_{4}8^{1}_{7}3^{1} $  & \multirow{1}{*}{113a} &\multirow{1}{*}{$[1152, 157862]$} &$(376, 0)$  &  $ \II_{(3, 0)}4^{-3}_{5}3^{1} $  \\                                                                               
 \multirow{1}{*}{38c} & \multirow{1}{*}{$C_2^2\rtimes{S}_4$}&$(262, 0)$  & $ \II_{(3, 2)}2^{4}_{2}8^{-1}_{5}3^{1} $ &   \multirow{1}{*}{113b} &\multirow{1}{*}{$[1152, 157862]$}&$(360, 64)$ &  $ \II_{(3, 0)}2^{2}4^{-1}_{5}3^{1} $  \\                                                                          
39 & ${A}_{4,3}$&$(0, 0)$& $ \II_{(3, 2)}4^{-3}_{7}3^{3} $  & \multirow{1}{*}{115a} &\multirow{1}{*}{$[768, 1090134]$} &$(288, 0)$ &  $ \II_{(3, 0)}4^{-1}_{3}8^{-2}_{4} $ \\                                                                                   
\multirow{1}{*}{40a} &\multirow{1}{*}{$C_2\times 2_+^{1+4}$} &$(160, 0)$ & $ \II_{(3, 2)}4^{5}_{1} $ &   \multirow{1}{*}{115b} &\multirow{1}{*}{$[768, 1090134]$}&$(358, 0)$ &  $ \II_{(3, 0)}4^{-1}_{3}8^{-2}_{4} $  \\                                                                                         
 \multirow{1}{*}{40b} &\multirow{1}{*}{$C_2\times 2_+^{1+4}$}&$(196, 0)$& $ \II_{(3, 2)}4^{5}_{1} $  & 117 & $[768, 1090070]$ &$(336, 64)$  &  $ \II_{(3, 0)}4^{3}_{3} $  \\                                                                                                    
\multirow{1}{*}{42a} & \multirow{1}{*}{$[64,202]$}&$(208, 0)$& $ \II_{(3, 2)}2^{2}4^{-2}_{2}8^{-1}_{3} $  & \multirow{1}{*}{118a} &\multirow{1}{*}{${S}_6$} &$(0, 0)$&  $ \II_{(3, 0)}2^{2}4^{-1}_{3}3^{2}5^{1} $  \\                                                                    
 \multirow{1}{*}{42b} & \multirow{1}{*}{$[64,202]$}&$(232, 0)$ & $ \II_{(3, 2)}2^{2}4^{-2}_{2}8^{-1}_{3} $  &  \multirow{1}{*}{118b} &\multirow{1}{*}{${S}_6$} &$(0, 0)$ &  $ \II_{(3, 0)}2^{2}4^{-1}_{3}3^{2}5^{1} $ \\                                                                    
\multirow{1}{*}{43a} & \multirow{1}{*}{$\Gamma_{25}a_1$}&$(0, 0)$ & $ \II_{(3, 2)}4^{4}_{0}8^{1}_{1} $ &   \multirow{1}{*}{118c} &\multirow{1}{*}{${S}_6$} &$(360, 0)$ &  $ \II_{(3, 0)}4^{-1}_{3}3^{2}5^{1} $  \\                                                                                
 \multirow{1}{*}{43b} & \multirow{1}{*}{$\Gamma_{25}a_1$}&$(232, 0)$& $ \II_{(3, 2)}2^{2}4^{2}8^{1}_{1} $ &   \multirow{1}{*}{118d} &\multirow{1}{*}{${S}_6$} &$(360, 0)$ &  $ \II_{(3, 0)}4^{-1}_{3}3^{2}5^{1} $  \\                                                                               
 \multirow{1}{*}{43c} & \multirow{1}{*}{$\Gamma_{25}a_1$}&$(236, 0)$& $ \II_{(3, 2)}2^{2}_{2}4^{2}8^{1}_{7} $ &  \multirow{1}{*}{119a} & \multirow{1}{*}{$M_{10}$} &$(0, 0)$ &  $ \II_{(3, 0)}2^{1}_{1}4^{2}_{0}3^{1}5^{1} $  \\                                                                   
 \multirow{1}{*}{43d} & \multirow{1}{*}{$\Gamma_{25}a_1$}&$(238, 0)$  & $ \II_{(3, 2)}2^{-2}_{2}4^{-2}_{4}8^{1}_{7} $ &   \multirow{1}{*}{119b} & \multirow{1}{*}{$M_{10}$} &$(0, 0)$  &  $ \II_{(3, 0)}2^{1}_{7}4^{2}_{2}3^{1}5^{1} $ \\                                                             
 \multirow{1}{*}{43e} & \multirow{1}{*}{$\Gamma_{25}a_1$}&$(232, 32)$ & $ \II_{(3, 2)}4^{2}8^{1}_{1} $  &  \multirow{1}{*}{119c} & \multirow{1}{*}{$M_{10}$} &$(0, 0)$ &  $ \II_{(3, 0)}2^{1}_{1}4^{2}_{0}3^{1}5^{1} $  \\                                                                            
\multirow{1}{*}{44a} & \multirow{1}{*}{${A}_5$}&$(0, 0)$ & $ \II_{(3, 2)}2^{-2}4^{1}_{7}3^{-1}5^{-2} $ &   \multirow{1}{*}{119d} & \multirow{1}{*}{$M_{10}$} &$(470, 0)$ &  $ \II_{(3, 0)}2^{3}_{1}3^{1}5^{1} $  \\                                                                        
 \multirow{1}{*}{44b} & \multirow{1}{*}{${A}_5$}&$(280, 0)$  & $ \II_{(3, 2)}4^{-1}_{3}3^{-1}5^{-2} $ &   \multirow{1}{*}{119e} & \multirow{1}{*}{$M_{10}$} &$(470, 0)$  &  $ \II_{(3, 0)}2^{3}_{1}3^{1}5^{1} $ \\                                                                             
\multirow{1}{*}{45a} & \multirow{1}{*}{$C_2\times {S}_4$}&$(0, 0)$ & $ \II_{(3, 2)}2^{2}4^{3}_{5}3^{2} $  &  \multirow{1}{*}{119f} & \multirow{1}{*}{$M_{10}$} &$(470, 72)$ &  $ \II_{(3, 0)}2^{1}_{1}3^{1}5^{1} $\\                                                                                
\multirow{1}{*}{45b} & \multirow{1}{*}{$C_2\times {S}_4$}&$(192, 0)$  & $ \II_{(3, 2)}4^{3}_{5}3^{2} $  & \multirow{1}{*}{120a} &\multirow{1}{*}{$L_2(11)$} &$(0, 0)$&  $ \II_{(3, 0)}4^{1}_{7}11^{2} $  \\                                                                                         
 \multirow{1}{*}{45c} & \multirow{1}{*}{$C_2\times {S}_4$}&$(220, 0)$& $ \II_{(3, 2)}2^{4}_{4}4^{1}_{1}3^{2} $ &   \multirow{1}{*}{120b} &\multirow{1}{*}{$L_2(11)$}&$(0, 0)$&  $ \II_{(3, 0)}4^{1}_{7}11^{2} $  \\                                                                                
\multirow{1}{*}{46a} & \multirow{1}{*}{$[36,9]$}&$(0, 0)$ & $ \II_{(3, 2)}2^{-2}_{6}4^{-1}_{3}3^{2}9^{1} $ &  \multirow{1}{*}{121a} &\multirow{1}{*}{$[576, 8654]$} &$(0, 0)$ &  $ \II_{(3, 0)}4^{2}_{0}8^{1}_{7}3^{2} $  \\                                                                 
 \multirow{1}{*}{46b} & \multirow{1}{*}{$[36,9]$}&$(272, 0)$ & $ \II_{(3, 2)}4^{-1}_{5}3^{2}9^{1} $ &   \multirow{1}{*}{121b} &\multirow{1}{*}{$[576, 8654]$}&$(0, 0)$&  $ \II_{(3, 0)}4^{2}_{0}8^{1}_{7}3^{2} $  \\                                                                           
47 & ${S}_{3}^2$&$(0, 0)$  & $ \II_{(3, 2)}2^{2}4^{-1}_{3}3^{-3}9^{1} $ &   \multirow{1}{*}{121c} &\multirow{1}{*}{$[576, 8654]$}&$(334, 0)$ &  $ \II_{(3, 0)}2^{-2}_{2}8^{-1}_{5}3^{2} $  \\                                                                   
48 & $\Gamma_5a_2$&$(168, 0)$ & $ \II_{(3, 2)}4^{5}_{1} $ &  122 &$[500, 23]$ &$(0, 0)$ &  $ \II_{(3, 0)}4^{1}_{7}5^{3} $  \\                                                                                               
\multirow{1}{*}{49a} & \multirow{1}{*}{$[32,27]$}&$(144, 0)$ & $ \II_{(3, 2)}4^{5}_{1} $ &  124 & $[384, 18134]$&$(456, 0)$ &  $ \II_{(3, 0)}2^{1}_{1}4^{-1}_{5}16^{-1}_{5} $  \\                                 

 \multirow{1}{*}{49b} & \multirow{1}{*}{$[32,27]$}&$(180, 0)$ & $ \II_{(3, 2)}4^{5}_{1} $ &  \multirow{1}{*}{125a} & \multirow{1}{*}{$[384,17948]$} &$(256, 0)$  &  $ \II_{(3, 0)}4^{-2}_{4}8^{1}_{1}3^{1} $  \\                                                                                     
 \multirow{1}{*}{49c} & \multirow{1}{*}{$[32,27]$}&$(168, 32)$ & $ \II_{(3, 2)}2^{-2}4^{-3}_{5} $ &    \multirow{1}{*}{125b} & \multirow{1}{*}{$[384,17948]$}&$(256, 0)$ &  $ \II_{(3, 0)}4^{-2}_{4}8^{1}_{1}3^{1} $  \\                                                                              
50 & $C_2^2\times{S}_3$&$(112, 0)$ & $ \II_{(3, 2)}2^{4}4^{-1}_{3}3^{-3} $  & \multirow{1}{*}{126a} &\multirow{1}{*}{$[384, 20089]$} &$(352, 0)$ &  $ \II_{(3, 0)}4^{-3}_{1}3^{-1} $  \\                                                                                 
\multirow{1}{*}{51a} & \multirow{1}{*}{${S}_4$}&$(168, 0)$ & $ \II_{(3, 2)}2^{-2}4^{3}_{3}3^{-1} $ &   \multirow{1}{*}{126b} &\multirow{1}{*}{$[384, 20089]$}&$(388, 0)$&  $ \II_{(3, 0)}4^{-3}_{1}3^{-1} $  \\                                                                                 
 \multirow{1}{*}{51b} & \multirow{1}{*}{${S}_4$}&$(190, 0)$  & $ \II_{(3, 2)}2^{2}4^{-3}_{7}3^{-1} $  & 127 & ${A}_6$&$(430, 0)$  &  $ \II_{(3, 0)}2^{2}4^{1}_{7}3^{2} $  \\                                                                              
52 & $F_{21}$&$(0, 0)$ & $ \II_{(3, 2)}4^{1}_{7}7^{-3} $ &  \multirow{1}{*}{128a} &\multirow{1}{*}{$\Gamma\textnormal{L}_2(\mathbb{F}_4)$} &$(0, 0)$&  $ \II_{(3, 0)}4^{1}_{7}3^{-2}5^{-2} $  \\                                                                                  
53 & $F_5$&$(0, 0)$ & $ \II_{(3, 2)}2^{-2}_{4}4^{-1}_{5}5^{3} $ &   \multirow{1}{*}{128b} &\multirow{1}{*}{$\Gamma\textnormal{L}_2(\mathbb{F}_4)$}&$(0, 0)$  &  $ \II_{(3, 0)}4^{1}_{7}3^{-2}5^{-2} $  \\                                                                        
54 & $C_2\times D_4$&$(116, 0)$ & $ \II_{(3, 2)}4^{-4}_{2}8^{-1}_{3} $ &  \multirow{1}{*}{129a} &\multirow{1}{*}{$C_2\times L_2(7)$} &$(0, 0)$ &  $ \II_{(3, 0)}2^{2}4^{1}_{7}7^{2} $  \\                                                                               
\multirow{1}{*}{55a} &\multirow{1}{*}{$QD_{16}$}&$(0, 0)$ & $ \II_{(3, 2)}2^{1}_{1}4^{-2}_{4}8^{-2} $  &  \multirow{1}{*}{129b} &\multirow{1}{*}{$C_2\times L_2(7)$} &$(0, 0)$ &  $ \II_{(3, 0)}2^{2}4^{1}_{7}7^{2} $  \\                                                                          
 \multirow{1}{*}{55b} &\multirow{1}{*}{$QD_{16}$}&$(222, 0)$ & $ \II_{(3, 2)}2^{3}_{1}8^{2} $  & 131 & $[192, 1494]$ &$(312, 0)$ &  $ \II_{(3, 0)}4^{-1}_{3}8^{-2}_{4} $  \\                                                                                    
 \multirow{1}{*}{55c} &\multirow{1}{*}{$QD_{16}$}&$(222, 40)$ & $ \II_{(3, 2)}2^{1}_{1}8^{2} $  & 132 & $T_{192}$&$(312, 48)$ &  $ \II_{(3, 0)}4^{3}_{3} $  \\                                                                                               
56 & ${A}_4$&$(144, 0)$& $ \II_{(3, 2)}4^{5}_{1} $ &  \multirow{1}{*}{133a} &\multirow{1}{*}{$C_2\times M_9$} &$(428, 0)$ &  $ \II_{(3, 0)}2^{2}_{2}4^{1}_{1}9^{1} $  \\                                                                                      
57 & $C_2^3$ &$(108, 16)$ & $ \II_{(3, 2)}4^{5}_{1} $ &   \multirow{1}{*}{133b} &\multirow{1}{*}{$C_2\times M_9$}&$(428, 0)$ &  $ \II_{(3, 0)}2^{-2}_{4}4^{-1}_{3}9^{1} $  \\                                                                                    
58 & $C_2^3$&$(146, 0)$ & $ \II_{(3, 2)}2^{2}_{0}4^{2}8^{1}_{1} $  & \multirow{1}{*}{134a} &\multirow{1}{*}{$\textnormal{A}\Gamma\textnormal{L}_1(\mathbb{F}_9)$} &$(0, 0)$ &  $ \II_{(3, 0)}2^{1}_{1}4^{-2}_{4}3^{1}9^{1} $  \\                                                                  
\multirow{1}{*}{59a} & \multirow{1}{*}{$C_2\times C_4$}&$(172, 0)$ & $ \II_{(3, 2)}2^{2}_{2}4^{-1}_{5}8^{-2}_{6} $ &   \multirow{1}{*}{134b} &\multirow{1}{*}{$\textnormal{A}\Gamma\textnormal{L}_1(\mathbb{F}_9)$} &$(0, 0)$&  $ \II_{(3, 0)}2^{1}_{1}4^{-2}_{4}3^{1}9^{1} $  \\                                                            
\multirow{1}{*}{59b} & \multirow{1}{*}{$C_2\times C_4$}&$(136, 0)$ & $ \II_{(3, 2)}2^{-2}_{6}4^{1}_{7}8^{-2}_{4} $  & \multirow{1}{*}{135a} &\multirow{1}{*}{${S}_3\times{S}_4$} &$(204, 0)$ &  $ \II_{(3, 0)}4^{3}_{3}3^{-2} $  \\                                                                          
60 & ${S}_3$&$(210, 0)$  & $ \II_{(3, 2)}2^{4}_{2}4^{1}_{7}3^{-2} $ &   \multirow{1}{*}{135b} &\multirow{1}{*}{${S}_3\times{S}_4$} &$(204, 0)$  &  $ \II_{(3, 0)}4^{3}_{3}3^{-2} $  \\                                                                               
63 & $C_6$&$(140, 0)$ & $ \II_{(3, 2)}2^{-4}_{2}4^{1}_{1}3^{3} $  & 136 & ${S}_5$&$(298, 0)$ &  $ \II_{(3, 0)}2^{2}4^{1}_{7}5^{-2} $  \\                                                                          
64 & $C_4$&$(84, 16)$ & $ \II_{(3, 2)}4^{5}_{1} $  & \multirow{1}{*}{137a} &\multirow{1}{*}{${S}_5$} &$(350, 0)$ &  $ \II_{(3, 0)}2^{2}_{0}4^{1}_{1}3^{-1}5^{-1} $  \\                                                                               
65 & $\#6144$ &$(448, 0)$ & $ \II_{(3, 1)}2^{2}4^{-1}_{3}8^{-1}_{3} $ &   \multirow{1}{*}{137b} &\multirow{1}{*}{${S}_5$} &$(350, 0)$  &  $ \II_{(3, 0)}2^{2}_{0}4^{1}_{1}3^{-1}5^{-1} $  \\                                                               
66 & $\#3072$ &$(384, 0)$  & $ \II_{(3, 1)}2^{2}4^{-2}_{4}3^{1} $  & 139 & $[96,195]$ &$(252, 0)$&  $ \II_{(3, 0)}4^{3}_{7}3^{2} $  \\                                                                                    
67 & $\#2048$&$(352, 0)$& $ \II_{(3, 1)}4^{4}_{2} $ &  143 & $[64,257]$ &$(270, 16)$ &  $ \II_{(3, 0)}2^{-1}_{3}8^{-2} $  \\                                                                                             
\multirow{1}{*}{69a} & \multirow{1}{*}{$M_{20}$}&$(0, 0)$& $ \II_{(3, 1)}2^{2}4^{-1}_{3}8^{1}_{7}5^{-1} $  & 146 & $C_2\times {S}_4$&$(290, 0)$ &  $ \II_{(3, 0)}4^{2}_{0}8^{-1}_{5}3^{-1} $  \\                                                               
 \multirow{1}{*}{69b} & \multirow{1}{*}{$M_{20}$}&$(400, 0)$ & $ \II_{(3, 1)}4^{1}_{1}8^{-1}_{5}5^{-1} $  & 148 & $C_2\times{S}_4$&$(296, 24)$ &  $ \II_{(3, 0)}2^{-2}_{4}8^{1}_{1}3^{-1} $  \\                                                                    
\multirow{1}{*}{70a} &\multirow{1}{*}{$F_{384}$} &$(0, 0)$ & $ \II_{(3, 1)}4^{-2}_{4}8^{-2}_{2} $  & \multirow{1}{*}{149a} &\multirow{1}{*}{$C_2\times F_5$} &$(232, 0)$ &  $ \II_{(3, 0)}2^{-2}_{4}4^{-1}_{3}5^{2} $  \\                                                                         
 \multirow{1}{*}{70b} &\multirow{1}{*}{$F_{384}$} &$(334, 0)$ & $ \II_{(3, 1)}2^{2}_{2}8^{2}_{0} $ &  \multirow{1}{*}{149b} &\multirow{1}{*}{$C_2\times F_5$} &$(232, 0)$ &  $ \II_{(3, 0)}2^{-2}_{4}4^{-1}_{3}5^{2} $  \\                                                                           
 \multirow{1}{*}{70c} &\multirow{1}{*}{$F_{384}$} &$(328, 0)$ & $ \II_{(3, 1)}2^{2}8^{2}_{2} $  & 152 & $C_2\times QD_{16}$ &$(340, 40)$&  $ \II_{(3, 0)}2^{-2}_{4}16^{-1}_{3} $  \\            
 \multirow{1}{*}{70d} &\multirow{1}{*}{$F_{384}$} &$(328, 48)$ & $ \II_{(3, 1)}8^{2}_{2} $  & 160& $C_2^2\rtimes C_4$&$(174, 24)$ &  $ \II_{(3, 0)}4^{-1}_{3}8^{-2}_{4} $  \\                                                                                         
71a & $[384, 20164]$ & $(328, 0)$& $ \II_{(3, 1)}4^{4}_{2} $ & &  & &   \\                                      
\hline                

\end{longtable}}

\bibliographystyle{halpha-abbrv}
\bibliography{references}

\end{document}